\crefname{hypothesis}{Hypothesis}{Hypotheses}
\title{Efficient nonlocal linear image denoising: Bilevel optimization with Nonequispaced Fast Fourier Transform and matrix-free preconditioning\thanks{Submitted to the editors DATE.
\funding{A.M-T. acknowledges support from the MAC--MIGS CDT Scholarship under EPSRC grant EP/S023291/1. J.W.P. acknowledges support from EPSRC grant
EP/S027785/1.}}}
\author{Andrés Miniguano-Trujillo%
    \thanks{Maxwell Institute for Mathematical Sciences, The University of Edinburgh and Heriot-Watt University, Bayes Centre, Edinburgh, United Kingdom
    (\email{Andres.Miniguano-Trujillo@ed.ac.uk} \orcidlink{0000-0002-0877-628X})}
\and John W. Pearson\thanks{School of Mathematics and Maxwell Institute for Mathematical Sciences, The University of Edinburgh, Edinburgh, United Kingdom 
  (\email{j.pearson@ed.ac.uk} \orcidlink{0000-0002-6063-1766}, \email{b.goddard@ed.ac.uk} \orcidlink{0000-0002-8781-014X})}
\and Benjamin D. Goddard\footnotemark[3]}
\newcommand{\MaxT}{$\mathsf{\times}$}
\newcommand{\EE}[1][]{\(10^{#1}\)}
\newcommand{\R}{\mathbb{R}}
\newcommand{\C}{\mathbb{C}}
\newcommand{\Z}{\mathbb{Z}}
\newcommand{\N}{\mathbb{N}}
\newcommand{\F}{\mathbb{F}}
\DeclareMathOperator*{\argmin}{arg\,min}
\DeclareMathOperator*{\argmax}{arg\,max}
\DeclareMathOperator*{\spn}{span}
\newcommand{\bfa}{\mathbf{a}}
\newcommand{\bfb}{\mathbf{b}}
\newcommand{\bfe}{\mathbf{e}}
\newcommand{\bff}{\mathbf{f}}
\newcommand{\bfg}{\mathbf{g}}
\newcommand{\bfu}{\mathbf{u}}
\newcommand{\bfv}{\mathbf{v}}
\newcommand{\bfw}{\mathbf{w}}
\newcommand{\bfq}{\mathbf{q}}
\newcommand{\bfr}{\mathbf{r}}
\newcommand{\bfx}{\mathbf{x}}
\newcommand{\bfy}{\mathbf{y}}
\newcommand{\etab}{\pmb{\eta}}
\newcommand{\UnitSim}[2][U]{ \mathsf{#2}_{#1} }
\newcommand{\Idn}[1][n]{\mathsf{I}_{#1}}
\newcommand{\Ones}[1][n]{\mathbf{1}_{#1}}
\newcommand{\Zeros}[1][n]{\mathbf{0}_{#1}}
\newcommand{\Prec}[1][]{ \mathsf{P}_{\text{#1}}}
\DeclareMathOperator{\Rayleigh}{\mathcal{R}}
\newcommand{\sfL}{\mathsf{L}}
\newcommand{\llb}{\llbracket}
\newcommand{\rrb}{\rrbracket}
\DeclareMathOperator{\dive}{div}
\DeclareMathOperator{\diag}{diag}
\DeclareMathOperator{\proj}{proj}
\newcommand*{\addFileDependency}[1]{
  \typeout{(#1)}
  \@addtofilelist{#1}
  \IfFileExists{#1}{}{\typeout{No file #1.}}
}
\newcommand*{\myexternaldocument}[1]{%
    \externaldocument{#1}%
    \addFileDependency{#1.tex}%
    \addFileDependency{#1.aux}%
}
\begin{document}

\maketitle

\begin{abstract}
  We present a new approach for nonlocal image denoising, based around the application of an unnormalized extended Gaussian ANOVA kernel within a bilevel optimization algorithm. A critical bottleneck when solving such problems for finely--resolved images is the solution of huge--scale, dense linear systems arising from the minimization of an energy term. We tackle this using a Krylov subspace approach, with a Nonequispaced Fast Fourier Transform utilized to approximate matrix--vector products in a matrix--free manner. We accelerate the algorithm using a novel change of basis approach to account for the (known) smallest eigenvalue--eigenvector pair of the matrices involved, coupled with a simple but frequently very effective diagonal preconditioning approach. We present a number of theoretical results concerning the eigenvalues and predicted convergence behavior, and a range of numerical experiments which validate our solvers and use them to tackle parameter learning problems. These demonstrate that very large problems may be effectively and rapidly denoised with very low storage requirements on a computer.
\end{abstract}

\begin{keywords}
Parameter identification; ANOVA kernel; Nonequispaced Fast Fourier Transform; fast matrix--vector multiplication; nonlocal image denoising
\end{keywords}

\begin{AMS}
35Q93, 
05C50, 
65D18, 
65F08, 
65N21, 
65T50 
\end{AMS}


\section{Introduction}

Image denoising is the process of reducing or removing unwanted noise in an image, and forms a crucial step for image processing and image analysis \cite{Scherzer2009,Chan2005,Bouman2022}. This paper focuses on fast, effective, and storage--efficient numerical methods for the resolution of such problems.

Variational methods are a powerful approach to study and formalize denoising models from a functional perspective \cite{Schoenlieb2015,Scherzer2009}. In particular, the minimization of the total variation seminorm \cite{Rudin_1992} and the success of the Chambolle--Pock algorithm \cite{Chambolle2011} have inspired an extensive catalog of improved methods and algorithms. In this work, we consider nonlocal image denoising, which was inspired by image filter methods incorporating information from generalized neighborhoods of a pixel to reconstruct it. This way, the amount of redundancy of data in a digital image becomes a crucial tool for guiding the reconstruction \cite{Kindermann2005,Gilboa2009}. In particular, the nonlocal means filter \cite{Buades2005b} and its many extensions \cite{Buades2010,Dabov2007} have been effective in removing noise, while preserving textures and avoiding common pitfalls of local models. Crucially, its extension to a variational setting in \cite{Gilboa2007} led to a range of new developments, including a rich analytical theory \cite{Brezis2018,Davoli2023}, by making use of tools from nonlocal calculus \cite{Gilboa2009}.

Two caveats are encountered when employing nonlocal tools for denoising: parameter identification and computational cost. The former is a classical question of denoising methods, due to the parametric nature of models that weight regularization against fidelity and underlying information. Bilevel optimization has proven an effective tool to determine high--quality reconstructions from trained parameters; see \cite{Davoli2023} for a review. The latter is instead inherent to nonlocal terms. The use of such terms yields dense discretizations of operators, the computation of which can be computationally cumbersome on its own, and thus limits the usage of these tools for large--scale or more complex applications. 

The parameter calibration of the nonlocal means model was studied in \cite{D’Elia2021}. There, the nonlocal kernel was approximated using a large--neighborhood localized approach that was suitable for small-- and medium--sized images and which resulted in a multi--banded semi--sparse matrix. This sparsification technique is just another approach to improve the performance of the reconstruction originally proposed in \cite{Buades2005b}. For instance, \cite{Gilboa2007} used a semilocal approach to reduce the amount of comparisons between patches; see also \cite{Mahmoudi2005,Darbon2008,Froment2014}.

Nonlocal operators in variational imaging arise as continuous extensions of kernel--based methods that encode an underlying space of features. Reproducible kernel Hilbert spaces (RKHSs) have gained significant attention due to their key connection with Gaussian processes \cite{Foucart2022, Rasmussen2006,Vaart2008}, their flexibility in representing complex function spaces \cite{Kennedy2013,MoriartyOsborne2024},  and their effectiveness in various inverse problem settings \cite{Latz2024,Bai2024,Teckentrup2020}. A notable example is the ANOVA kernel, which is designed to decompose interactions among different feature components, allowing for a structured representation of multi--dimensional dependencies \cite{Durrande2013,Gunn2002,Berlinet2004}. The kernel is often presented as
\begin{equation}\label{eq:ANOVA_baseline}
	\gamma^t_{\textsf{ANOVA}}(\bfx, \bfy) \coloneqq \sum_{1 \leq i_1 < i_2 < \ldots < i_t \leq d} \, \prod_{j=1}^t \gamma_{i_j} (x_{i_j}, y_{i_j}),
\end{equation}
where \(\bfx,\bfy \in \R^d\) and \(t\leq d\) \cite{Duvenaud2011,ShaweTaylor2004,Vapnik2000,Stitson1998}. The term ANOVA is an abbreviation for analysis of variance. Known also as the \(t\)--th order additive kernel, \cref{eq:ANOVA_baseline} compares the variance of the data in detail by incorporating all one--dimensional interactions from the kernels \(\gamma_{i_j}\). Multiple base kernels are multiplied to cover higher order feature interactions according to the order \(t\) of the additive kernel which allow us to rely on fewer features \cite[\S 9.2]{ShaweTaylor2004}. 

Letting all underlying kernels \(\gamma_{i_j}\) in \cref{eq:ANOVA_baseline} be the squared--exponential  \( \gamma(x,y) = e^{-\sigma^{-2} |x-y|^2}\) with the same shape parameter \(\sigma > 0\), we obtain, by the power law, that for the \(d\)--th order ANOVA it holds: 
\[
	\gamma^d_{\textsf{ANOVA}}(\bfx, \bfy) = \exp\cbr{ -\sigma^{-2} \norm{\bfx - \bfy}^2 }.
\]
In other words, the \(d\)--th order ANOVA kernel is nothing else than the squared--exponential kernel, also known as the Gaussian kernel \cite[\S 3]{ShaweTaylor2004}, evaluated at all feature dimensions at once \cite{Wagner2024}. Summing all the \(t\)--th order ANOVA kernels, we obtain a full additive kernel involving all order at the possible expense of over--determination. Instead, if we combine terms only relying on a small number of features can be more promising for generalization \cite{Stitson1998}. In \cite{Nestler2022}, this idea motivated the introduction of the \emph{extended Gaussian ANOVA kernel} 
\begin{equation}\label{eq:Original_ANOVA}
	\gamma_{\textsf{e--ANOVA}}(\bfx,\bfy) = \frac{1}{\sfL} \sum_{\ell=1}^\sfL \exp \cbr{ {-}\sigma^{-2} \big\| \mathsf{W}_{\ell} [\bfx] - \mathsf{W}_{\ell} [\bfy]  \big\|_2^2 } 
	\eqqcolon
	\frac{1}{\sfL} \sum_{\ell=1}^{\sfL} \gamma_{\ell}(\bfx, \bfy)
	.
\end{equation}
Here, the sub--kernels \(\gamma_\ell\) depend only on low--dimensional feature interactions which are given by windows of features \( W_\ell \). For each subgroup \(\ell\), we have that \( W_\ell[\bfx] \in \R^t\) and often \( t \ll d\).

The kernel in \cref{eq:Original_ANOVA} is also known as the additive Gaussian kernel \cite{Wagner2024}, and we say it is unnormalized whenever its diagonal \( \gamma_{\textsf{e--ANOVA}}(\bfx,\bfx)\) is fixed to a non--unit value.

This work introduces an efficient computational framework, suitable for large images and for the incorporation of a large number of features without the need for localization or sparsity enforcement. The method is based on the fast summation approach and feature splitting for applying the \emph{unnormalized extended Gaussian ANOVA kernel} \cref{eq:Original_ANOVA}. The summation algorithm is in turn based on the Nonequispaced Fast Fourier Transform (NFFT) \cite{Kunis2006a,Potts2003,Potts2004}, which we deploy for image denoising problems in this work. This algorithm yields an operator that is used to solve the denoising problem for a given parameter configuration. In our setting, this leads to a linear system where the nonlocal kernel is used to assemble an abstract unnormalized graph Laplacian shifted by a constant scaling of the identity. The abstract linear operator setting is suitable for matrix--free solution schemes, including suitable Krylov subspace iterative methods that require a fixed (moderate) number of matrix--vector multiplications.

Our proposed framework extends the applicability of \cite{D’Elia2021} to large--scale imaging tasks.  Specifically, a broader class of kernels is introduced to incorporate full covariate information without enforcing sparsity, thus capturing broad nonlocal interactions. This step avoids the introduction of approximation errors from localization while gaining computational efficiency through fast summation techniques. Moreover, in contrast to \cite{D’Elia2021}, where the numerical challenges posed by ill--conditioning are not addressed, this work analyzes and integrates dedicated preconditioning strategies. Additionally, we provide a rigorous analysis of discrete nonlocal operators, enabling the use of conjugate gradient (CG) as an efficient linear solver. 
Thus, our combined contributions provide an efficient and portable procedure for solving discretized nonlocal systems arising in similar imaging applications. The structure of the linear systems involved may also arise in applications outside imaging, for instance feature training for neural networks \cite{HeNe2024}.

The solution of linear systems governed by graph Laplacians using such methods is greatly aided by effective preconditioning. A number of approaches have been developed and refined when the full matrix is available, see for instance \cite{Spielman2011,Napov2016,Gao2023}, however besides diagonal--based preconditioners \cite{Takapoui2016,Qu2022} there is limited work on methods for matrix--free systems. The sparse approximate inverse approach for $M$--matrices is an exception, where \((\mathsf{I}+S)\)--type preconditioners have proven a computationally cheap alternative \cite{Simons1998,Tam2005,Jin2006,Zhang2009}; see also \cite{SaberiNajafi2014}. Notwithstanding, in practice, many of these reduce to the classical Jacobi diagonal preconditioner for the graph Laplacian. This work extends the Jacobi preconditioner, and its quadratic variant applied in \cite{D’Elia2021}, using a matrix--free approach that would otherwise result in dense preconditioners under a change of basis that singles out the smallest eigenvalue of the system. Motivated by deflation, exact operator formulae are derived for the change of basis, whose applicability extends to Hermitian finite dimensional operators. The transformation displays linear computational complexity and can be used efficiently to project the action of any preconditioner under the new basis for a given subblock of the linear system.

The key contributions of this article are summarized below:
\begin{itemize}
\item We embed an NFFT routine into bilevel optimization for image denoising for the first time.

\item We devise a novel strategy based on a change of basis and preconditioning, to accelerate the matrix--free solution of the resulting linear systems with Krylov subspace methods.

\item We present new analytic results concerning spectral properties of graph Laplacians and shifted equations governed by them. We also present results on the effectiveness of the preconditioners, specifically by localizing the eigenvalues of the preconditioned system.

\item We complement our analysis with numerical tests to demonstrate the effectiveness of our NFFT and deflation by change of basis strategies, with near--constant iteration counts exhibited.

\item We also train a denoising parameters from a dataset of images featuring patterns, textures, and sudden intensity jumps, showcasing the effectiveness of the kernel to obtain reconstructions under the presence of such structures.

\item We provide open--source code for the above tests, available at  
\href{https://github.com/andresrmt/Prec_GLs_NFFT_BLO}{\texttt{edin.ac/3zh86hT}}.
\end{itemize}

This paper is structured as follows. In \cref{sec:Preliminaries} we provide necessary background on variational denoising models, the choice of similarity kernel, and the finite element discretization. In \cref{sec:NFFT_Gauss} we describe the NFFT and its use in computing the Fast Gauss Transform. In \cref{sec:PreconditioningUnderChangeOfBasis} we outline our preconditioning methodology, including the bespoke change of basis and its application to graph Laplacian matrices, diagonal preconditioners, theoretical results on spectral properties of graph Laplacians and the resulting preconditioned matrices, and a comparison of the properties of different preconditioners. In \cref{sec:Numerics} we provide a range of numerical experiments to further validate our approach, and in \cref{sec:Conc} we present our conclusions.

\subsection{Notation and concepts}
We will use bold notation for vectors whose components are values of some underlying variables that are relevant in this paper. However, we will keep in mind that any bold quantity may result from the discretization of a continuous or higher--dimensional object.

For \(m,n \in \N\), let the integer interval between \(m\) and \(n\) be denoted by \( \llb m,n \rrb \coloneqq \{m, m+1, \ldots, n-1, n \} \). We denote by \( \mathcal{M}_n (\F)\) the space of matrices of size \(n \times n\) with entries in the field \(\F \in \{\R,\C\}\). In particular, we denote by \(\mathsf{I}_n\) the identity matrix in \( \mathcal{M}_n (\F)\). The columns  of \(\mathsf{I}_n\) define the canonical basis \( \{ \bfe_i \}_{i\in \llb 1,n\rrb} \), \( \Ones \) denotes a vector of ones of size \(n\), while \(\Ones[n,n]\) is the \(n\)--dimensional square matrix of all ones. Similarly, \( \Zeros = 0 \, \Ones\) and \( \Zeros[n,n] = 0 \, \Ones[n,n] \). 

Let \( \bfu = (u_i)_{i \in \llb 1,n \rrb} \) and \( \bfv = (v_i)_{i \in \llb 1,n \rrb}\) be two vectors in \(\F^n\). The operator \( \diag : \F^n \to \mathcal{M}_n (\F)\) maps \(\bfu\) to the diagonal matrix \( \diag(\bfu)\) whose diagonal coincides with the vector \( \bfu\). We will \emph{overload} this operator by also defining \( \diag: \mathcal{M}_n(\F) \to \F^n \) such that \( \diag(A) \) is the \(n\) dimensional vector whose entries correspond to the diagonal of the matrix \(A\). The \emph{Hadamard product} \( \circ: \F^n \times \F^n \to \F^n \) computes the component--wise product of its arguments; i.e., \( (\bfu \circ \bfv)_i = u_i v_i \) for all \( i \in \llb 1,n\rrb\). We will also \emph{overload} this operator with its natural extension on matrices of the same size with \( A \circ B \) being the component--wise product between the entries of the matrices \(A, B \in \mathcal{M}_{m,n} (\F)\). The \emph{Kronecker product} \( \otimes: \mathcal{M}_{m,n}(\F) \times \mathcal{M}_{p,q}(\F) \to \mathcal{M}_{mp, nq}(\F) \) is a block matrix \( A \otimes B\) where each element of \(A \in \mathcal{M}_{m,n}(\F)\) is replaced by a copy of the entire matrix \(B \in \mathcal{M}_{p,q}(\F)\), scaled by the value of that element. In particular, we have that \( \bfu \otimes \bfv^\dagger = \bfu \bfv^\dagger\). For any integer \(p\), the upper triangular operator \( \mathrm{Triu}_{p}: \mathcal{M}_{m,n} (\F) \to \mathcal{M}_{m,n} (\F) \) takes a matrix \(A\) and returns its upper triangular part \( \mathrm{Triu}_{p}(A) \), which is equal to \(A\) from its \(p\)--th diagonal and above, but zero elsewhere.

Letting \(S\) be a set, the indicator function of \(S\) at \(x\) is given by \( \iota_S \), taking value \(1\) if \( x\in S\) and \(0\) otherwise. In case \(S\) is a singleton, we will omit the argument of \( \iota_S \). The canonical projection of a vector on the finite set \(S\), \( \pi_S (\bfu) \), consists of all entries of \(\bfu\) such that \( \iota_S\) is positive. As a shorthand, we define \( \bfu_{ m:n } \coloneqq \pi_{ \llb m,n\rrb } (\bfu)\), but omit the colon when \(m = n\), with \( u_m \coloneqq \pi_{\{m\}} (\bfu) \). We extend this to matrix indexing in a similar fashion; i.e., for \( A \in \mathcal{M}_n( \F )\) we have that \( A_{\ell:m, p:q}\) is the \( (m-\ell+1) \times (q-p+1) \) sub--matrix consisting of the elements in the \(\ell\)--to--\(m\) rows and the \(p\)--to--\(q\) columns of \(A\).

Throughout this document, \( \| \bfu \| \) represents the norm of \( \bfu \). We will make use of different norms, particularly \( p \)--norms. In case there is a need to specify the particular \( p \)--norm employed, it will be denoted as \( \| \cdot \|_p \), with \( p \in [1,\infty] \). The choice of \( p \) depends on the analysis being conducted. If no specific norm is mentioned, it can be inferred from context. In particular, the \(2\)--norm is induced by the inner product \( \langle \bfu, \bfv\rangle = \bfv^\dagger \bfu\) for all \( \bfu,\bfv \in \F^n\). Here \( \bfv^\dagger\) is the conjugate transpose of \(\bfv\).

For a Hermitian matrix \( A \in \mathcal{M}_n (\F)\), the spectrum of \(A\) is the set of eigenvalues \( \Sigma(A) = \{ \lambda_i \}_{i \in \llb 1,n \rrb} \) where each eigenvalue is indexed in such a way that \( |\lambda_1| \leq |\lambda_2| \leq \ldots \leq  |\lambda_n|\). We denote the largest eigenvalue (in modulus) \( \rho(A) \coloneqq \lambda_n\) as the \emph{spectral radius of \(A\)}. The second smallest eigenvalue (in modulus) is labeled \( a(A) \coloneqq \lambda_2\) and corresponds to the \emph{algebraic connectivity of \(A\)}.

\section{Preliminaries on variational denoising and control}\label{sec:Preliminaries}

Our work is motivated by the practical computation of a particular class of nonlocal noise filters. Image denoising is the process of reducing or removing unwanted noise in an image. It is used either to enhance the visual appeal of images or as a preliminary step for image analysis and feature extraction. Noise typically corrupts an observed image due to instrument manipulation or medium artifacts. There are many models for noise \cite[\S2.3]{Scherzer2009}, where Gaussian additive noise is the most suitable for digital images \cite[\S1]{Lebrun2012}. The model states that a clean discretized image \(\bfu\in \R^n\) is distorted by a Gaussian signal \(\mathbf{h}\in \mathcal{N}(\Zeros,\hat{\sigma}^2 \Idn)\), yielding \(\bff = \bfu + \mathbf{h}\) as a registered image. Finding \(\bfu\) from this relation is a difficult task, due to the random nature of \(\mathbf{h}\). As a result, the main task of denoising is to identify and remove the noise while preserving the most important information and structures. Thus, there are several approaches to analyzing and tackling this problem from the perspectives of stochastic processes, calculus of variations, partial differential equations, and wavelet theory \cite{Lebrun2012,Buades2008}. We refer the reader to \cite{Bouman2022,Schoenlieb2015,Scherzer2009,Chan2005} for comprehensive introductions and reviews on imaging and denoising methods.

\subsection{Variational denoising}

Variational imaging casts an image as a signal \(u\) defined on a continuous medium. In this context, a registered noisy sample \(\bff\) is just a quantization of a noisy signal \(f\). Here, denoising methods use image priors and minimize an energy function to calculate the denoised image:
\begin{equation}\label{incl:Energy_method}
	u \in \argmin_{v\in V} \mathcal{E}(v),
\end{equation}
where \(V\) is a suitable function space. The motivation for \cref{incl:Energy_method} is the \emph{maximum a posteriori} (MAP) probability estimate. From a Bayesian perspective, the MAP estimate of \(v\) can be written as
\begin{equation*}
	u = \argmax_{v\in V} \, \log \mathrm{P}(f|v) + \log \mathrm{P}(v) ,
\end{equation*}
where the first term \( \mathrm{P}(f|v)\) is a likelihood function of \(v\), and the second term \(\mathrm{P}(v)\) represents the image prior. For additive white Gaussian noise, the energy \(\mathcal{E}\) is often selected as 
\begin{equation*}
	u = \argmin_{v\in V} \, \mu R(v) + \frac{\lambda}{2} \| v -f \|^2_{L^2},
\end{equation*}
where \(R\) denotes a regularization term weighted by \(\mu > 0\), and \( \frac{1}{2} \| v -f \|^2_{L^2}\) is a data fidelity weight denoting the distance between the reconstructed and noisy samples weighted by \(\lambda > 0\). We refer the reader to \cite{Piening2024} for a comprehensive review on the design of priors inspired by the underlying statistical properties of the noise model. In this work, we will focus on a prior that exploits the variational structure of the problem. This perspective naturally connects to the use of differential operators and their nonlocal counterparts, which we now discuss.

The variational standpoint allows us to properly define local differential operators and energies (e.g., Total Variation, Laplacians, Mean Curvature, and so on) and extend them to their nonlocal counterparts \cite{Kindermann2005,Gilboa2009}. 
The reasoning behind local operators is that they are good for smoothing out regions of the image, as showcased in the Rudin--Osher--Fatemi model of Total Variation Minimization (ROF) \cite{Rudin_1992}. ROF allows for \(u\) to be piece--wise discontinuous, thus an effective choice for edge preservation. However, the ROF model and many of its variants can fail to preserve the fine structure; i.e., details and textures of an image. Due to the regularity assumptions of local operators, fine structures are smoothed out because they behave, under the action of the operator, as noise. 
Nonlocal imaging is a valuable alternative that aims to preserve most of the fine details (e.g., contours, texture, high-contrast edges, flat areas) by considering lower regularity assumptions on \(u\) and the large amount of redundant information in textured images \cite{RosaVargas2016,Rosa2017}.

\subsection{Preliminaries on nonlocal calculus}

We follow the approach of \cite{Gilboa2007}. Let \(\Omega\) be a bounded domain in \(\R^{d_1}\) and \(u: \Omega \to \R\) be a real function. The notion of a derivative can be generalized to a nonlocal framework by introducing the \emph{nonlocal derivative}:
\begin{equation}\label{eq:nonlocal_derivative}
	\partial_\bfy u(\bfx) \coloneqq
	\frac{ u(\bfy) - u(\bfx) }{ \delta(\bfy,\bfx)},
\end{equation}
where \( \bfx,\bfy \in \Omega\) and \(\delta(\bfy,\bfx) \in (0,\infty]\) is a positive and symmetric quantity. Let us define the weight function \( \gamma \coloneqq \delta^{-2}\) so that \( \gamma(\bfx,\bfy) \in [0,\infty)\) and \(\gamma(\bfx,\bfy) = \gamma(\bfy,\bfx)\).
Then \cref{eq:nonlocal_derivative} can be written as
\(
	\partial_\bfy u(\bfx) \coloneqq [ u(\bfy) - u(\bfx) ] \sqrt{\gamma(\bfx,\bfy)}.
\)
Moreover, we define the \emph{nonlocal gradient} \( \nabla_\gamma u(\bfx): \Omega \to \Omega \times \Omega\) as the family of nonlocal derivatives 
\[
	\nabla_\gamma u(\bfx) = \{ \partial_\bfy u(\bfx) : \, \bfy \in \Omega \}.
\]
Alternatively, we have the pointwise representation \( \nabla_\gamma u: \Omega\times \Omega \to \R\) given by
\(
	\nabla_\gamma u (\bfx,\bfy) = [ u(\bfy) - u(\bfx) ] 
	\newline 
	\sqrt{\gamma(\bfx,\bfy)}.
\)
Let us also define the \emph{nonlocal divergence} for vector functions \( v: \Omega \times \Omega \to \R\) given by
\[
	(\dive_\gamma v) (\bfx) \coloneqq \int\limits_\Omega [ v(\bfx,\bfy) - v(\bfy,\bfx) ] \sqrt{\gamma (\bfx,\bfy)} \dif y.
\]
We define the \emph{nonlocal diffusion} of \(u\) as the operator \( \Delta_\gamma: \Omega \to \R\) such that
\begin{align*}
	\Delta_\gamma u(\bfx) &\coloneqq  \frac{1}{2}  \dive_\gamma \big( \nabla_\gamma u (\bfx) \big)
	= 
	\dfrac{1}{2}    \int\limits_\Omega [ \partial_\bfy u(\bfx) - \partial_\bfx u(\bfy) ] \sqrt{\gamma(\bfx,\bfy)} \dif \bfy
	= 
	\int\limits_\Omega [ u(\bfy) - u(\bfx) ] \gamma(\bfx,\bfy) \dif \bfy.
\end{align*}
Notice that, for the previous operators to be well defined, we require that at least \( \gamma \in L^\infty (\Omega,\Omega)\) and \( u \in L^2(\Omega)\).

Many properties from classical calculus extend to this nonlocal extension of the derivative. In particular, we have that: (a) the adjoint of the nonlocal gradient is the negative of the nonlocal divergence (i.e., \( \nabla_\gamma^* = -\dive_\gamma \)), (b) a zero flux property (divergence theorem: \( \langle \dive_\gamma v, 1 \rangle = 0 \)), and (c) the self--adjoint property for the nonlocal Laplacian (i.e., \(\Delta_\gamma^* = \Delta_\gamma\)). See \cite[\S 12]{AMT_Th_2024} for detailed proofs.

\subsection{Nonlocal denoising}

Originally inspired by convolution filters, nonlocal methods predict the value of a pixel based on generalized neighborhoods of the same image that represent or approximate the local behavior of the pixel. In this class of methods, the nonlocal means (NLM) filter introduced by \cite{Buades2005b} has proven to be successful, yielding several variants \cite{Buades2010} as the BM3D filter \cite{Dabov2007}. Inspired by the effectiveness of the NLM, \cite{Gilboa2007} proposed an effective and general variational framework for nonlocal operators as the action of the nonlocal regularizing functional
\[
    R(u) \coloneqq \frac{1}{4} \iint\limits_{\Omega \times \Omega} \big[ u(\bfy) - u(\bfx) \big]^2 \gamma(\bfx,\bfy) \dif \bfy \dif \bfx 
    \, = \frac{1}{2} \langle -\Delta_\gamma  u, u \rangle
    ,
\]
where \( \Omega \), known as the image domain, is a subset of \(\R^{d_1}\), and the weight function \(\gamma\) is a non--negative and symmetric scalar field for any \( (\bfx,\bfy) \in \Omega\times\Omega\). The choice of \(d_1\) is application--driven, commonly based on structured data; for instance, \(d_1\) will take the value of \(1\) for flat signals, \(2\) for static images, \(3\) for movies or animations, and higher values are used for hyperspectral imaging. We will focus on the case \( d_1 = 2\), but our analysis can be generalized to the other named cases. For image processing tasks the weight function is based on image features and can be understood as the \emph{similarity} or \emph{proximity} between two points \(\bfx\) and \(\bfy\), based on features in their neighborhood.

\begin{proposition}\label{prop:All_The_Nice_Properties_of_NL_Energy}
	Let \(\gamma \in L^\infty (\Omega\times\Omega)\), then 
	the functional \(R:L^2(\Omega) \to \R\) is differentiable and convex in \(L^2(\Omega)\), and its derivative is continuous and self--adjoint.
\end{proposition}

A direct proof can be found in \cite[\S 12]{AMT_Th_2024}. By construction, the functional \(R\) defines an energy seminorm in \(L^2(\Omega)\) induced by the positive semi--definite Hermitian form
\begin{equation}\label{eq:Nonlocal_Dot_Product}
    R'(u)v = \langle -\Delta_\gamma  u, v \rangle = \iint\limits_{\Omega\times \Omega} v(\bfx) \big[ u(\bfx) - u(\bfy) \big] \gamma(\bfx, \bfy) \dif \bfy \dif \bfx
    .
\end{equation}
We will use the notational shorthand \( \mathcal{L} \coloneqq \Delta_\gamma \) for brevity. The operator \(-\mathcal{L}\) is the continuous version of a graph Laplacian, and generalizes the classical Laplacian \cite{Gilboa2007,Gilboa2009}.

Inspired by the classical ROF model, and based on the maximum a posteriori probability estimate criterion \cite[\S4.5.4]{Chan2005}, the nonlocal denoising problem can be posed as the minimization of an energy consisting of the combination of a regularizing term and a fidelity term \cite{Gilboa2007}:
\begin{equation}
\label{eq:Min_Energy}
    \argmin_{u \in V} \mathcal{E}(u;\lambda) \coloneqq \mu R(u) + \frac{\lambda}{2} \| u - f \|^2_{L^2(\Omega)},
\end{equation}
where \(V\) is a suitable closed subspace of \(L^2 (\R^2)\), and \(\mu,\lambda\) are positive. The quotient \(\nicefrac{\lambda}{\mu}\) balances the fidelity term against the nonlocal regularizer. In principle, the regularizer weight can be omitted, yet it plays an essential rôle for numerical computations as the values of \(R\) can be large. Furthermore, the value of the fidelity weight determines how much of the noisy sample \(f\) is preserved when minimizing \(\mathcal{E}\) with respect to \(u\). Thus, finding an optimal value of \(\lambda\) in \cref{eq:Min_Energy} is crucial if we want to implement nonlocal denoising inside the scope of large--scale image recovery algorithms.

\subsection{Parameter learning via bilevel optimization}

Bilevel optimization has played an important rôle in image recovery and parameter calibration \cite{CarlosDelosReyes2013,Calatroni2017,DelosReyes2021,DelosReyes2023}. In the context of nonlocal denoising, this was first showcased in \cite{D’Elia2021} for optimizing the fidelity weight \(\lambda\) and kernel selection. 
If a clean and noise--less version of \(f\), labeled \( u_{\textsf c}\),  is available, then an optimal fidelity weight can be trained by solving
\begin{subequations}\label{eq:BilevelProblem_Continuous}
\begin{align}
    &\qquad \min_{u \in V, \lambda \in \Lambda} J(u;\lambda) \coloneqq \frac{1}{2} \| u_{\textsf c} - u \|^2_{L^2(\Omega)}
    \\[-0.5em]
    \text{subject to} &  \notag
    \\
    & u = \argmin_{v \in V} \mathcal{E}(v;\lambda) \in \{ v\in V:\, {-}\mu \mathcal{L} v + \lambda v = \lambda f \},      \label{eq:Euler-Lagrange}
    \\
    & \lambda \in \Lambda \coloneqq [\Lambda_{\min}, \Lambda_{\max}].
\end{align}
\end{subequations}
The linear equation in \cref{eq:Euler-Lagrange} is just the Euler--Lagrange equation of \cref{eq:Min_Energy}, where we have used \cref{eq:Nonlocal_Dot_Product}. A solution \(u = \argmin\limits_{v \in V} \mathcal{E}(v;\lambda) \) is the best reconstruction of the image for a given regularization level \(\lambda\) and a given regularization kernel embedded through \(\mathcal{L}\). The choice of \(\lambda\) is then influenced by comparing \(u\) with the ground truth \( u_{\textsf c}\).

It turns out that if \(0 < \Lambda_{\min} \leq \Lambda_{\max}\), then the solution set of the lower--level problem \cref{eq:Euler-Lagrange} contains a single point. Furthermore, depending on the choice of \(V\) and \(\gamma\), we can even extend \(\Lambda\) to include the zero fidelity case, which we do not cover here. For simplicity, let us consider \( V = L^2(\Omega) \) and kernels with support fully contained in \(\Omega\), thus we also fix \( \Lambda_{\min} > 0\).

\begin{theorem}\label{th:Uniqueness_LLP}
	For every \(\lambda \in \Lambda\), there exists a unique solution to \cref{eq:Euler-Lagrange}. Moreover, if \(f\) is bounded, it holds that \( \min\limits_{\bfx \in \Omega} f(\bfx) \leq u \leq \max\limits_{\bfx \in \Omega} f(\bfx)\).
\end{theorem}

The result follows from a minimizing sequence argument and the maximum principle for the elliptic operator \(-\mathcal{L}\) \cite[\S 12]{AMT_Th_2024}. 
From here, we can replace \cref{eq:Euler-Lagrange} with the constraint of the Euler--Lagrange equation, thus revealing that \cref{eq:BilevelProblem_Continuous} is nothing else than a nonlocal optimal control problem. Using a minimizing sequence along the Bolzano--Weierstrass theorem for \(\Lambda\), we obtain the following result:

\begin{theorem}
	The bilevel optimization problem \cref{eq:BilevelProblem_Continuous} admits a solution \((\lambda,u) \in \Lambda \times V\) satisfying the optimality system:
	\begin{equation}\label{eq:Optimality_System_Regularization}
	\left\{
		\begin{aligned}
		-\mu \mathcal{L} u + \lambda u &= \lambda f	&&\quad\emph{in } \Omega,
		\\
		-\mu \mathcal{L} p + \lambda p &= u - u_{\mathsf c}	&&\quad\emph{in } \Omega,
		\\
		\proj_{\Lambda} \big( \lambda + c\langle u-f, p \rangle \big) &= \lambda	&&\quad\forall c > 0.
		\end{aligned}
	\right.
\end{equation}
\end{theorem}

Moreover, if the kernel \(\gamma = \gamma_\theta\) is parameterized by a vector \(\theta\) in a convex and compact set \(\Theta\), then it can be proven \cite{D’Elia2021} that the extended problem
\begin{equation}\label{eq:biparametric_problem_control}
	\min_{u \in V, \lambda \in \Lambda, \theta \in \Theta} J(u;\lambda, \theta) \coloneqq \frac{1}{2} \| u_{\textsf c} - u \|^2_{L^2(\Omega)}
	\qquad
	\text{subject to}
	\qquad
	u = \argmin_{v \in V} \mathcal{E}_\theta (v;\lambda).
\end{equation}
also admits a solution which satisfies \cref{eq:Optimality_System_Regularization} with the additional constraint 
\(
	\proj_{\Theta} \del{ \theta - c \pd{ }{\theta} \langle p, \mathcal{L}_\theta u \rangle } = \theta	
\) for all \( c > 0\).
Moreover, we have the gradient
\[
	\nabla_{\lambda,\theta} \, J (u; \lambda, \theta) = 
	\begin{bmatrix}
		\langle f-u, p \rangle		& \mu \pd{ }{\theta} \langle p, \mathcal{L}_\theta u \rangle
	\end{bmatrix}^\top   \hspace{-0.3em} . 
\]
For example, if \(\theta\) is a scalar inducing a kernel \(\gamma_\theta\), as in the case of the shape parameter in the extended Gaussian ANOVA kernel \cref{eq:Original_ANOVA}, then 
\[
	\mu \pd{ }{\theta} \langle p, -\mathcal{L}_\theta u \rangle
	=
	\mu
	\int\limits_\Omega p(\bfx) 
	\sbr[4]{
	u(\bfx)
	\int\limits_\Omega \tpd{  \gamma_\theta}{\theta}  (\bfx,\bfy) \dif \bfy
	-
	\int\limits_\Omega u(\bfy) \tpd{  \gamma_\theta}{\theta} \dif \bfy
	}
	\dif \bfx .
\]

\begin{remark}
    Note that our discussion has focused on single--channel images. Notwithstanding, this is not a limitation, as the framework can be readily adapted to multichannel signals through a mixing technique. For further details on such approaches, see \cite{Gilboa2007,Buades2009,Buades2008}. Additionally, the review by \cite{Dai2013} provides additional insights on colour mixtures.
\end{remark}

\subsection{The choice of weights}\label{ssec:Weights}

The rôle of \(\gamma\), also known as the \emph{nonlocal kernel}, is key to determining the behavior of the regularizing term. This function often incorporates information from the image domain and the features that are relevant in the image (e.g., textures, periodicity) and thus guides the optimization process. 

\begin{wrapfigure}[13]{r}{0.36\textwidth} 
    \centering
    \vspace{-1em}
    \includegraphics[]{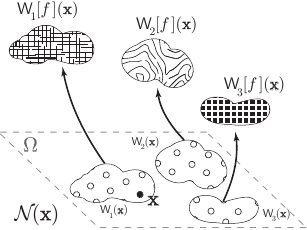}
    \caption{Representation of a neighborhood of features for a point \(\bfx \in \Omega\).}
    \label{fig:Neighbourhood_Features}
\end{wrapfigure}

Inspired by neighborhood filters, one can define the kernel weights based on \emph{affinity} functions. The basic affinity structure is of similarity between image features present already in the noisy sample \( f \in L^2(\Omega)\). To every point \(\bfx\), we can assign a \emph{neighborhood of features} \( \mathcal{N}(\bfx)\) which, in a broad sense, can consist of a finite set of \emph{windows} that can have different sizes and shapes to better adapt to the image; see \cref{fig:Neighbourhood_Features}. Formally, we can write \( \mathcal{N}(\bfx) = \{ \mathsf{W}_\ell (\bfx) \subseteq \Omega \}_{\ell \in \sf{L}}  \) for some \( \sfL \in \N\). Here, each index \(\ell\) can represent a region of interest in the image domain.
The map \(\mathcal{N}: \Omega \to (2^\Omega)^\sfL \) does not necessarily need to define windows that contain every pixel as we can take advantage of the high degree of redundancy of any natural image. This is an extension of the framework of a neighborhood system, which has shown that as long as representative values are assigned to each pixel, then all pixels in the neighborhood can be used to predict the value of such a pixel \cite{Efros1999,Buades2010}. This idea has inspired the design of patch--based priors; see for instance \cite{Piening2024,Altekrueger2023,Zoran2011}. Since our particular prior is given in the form of the linear operator \(-\mathcal{L}\), we focus next on describing several kernel--based filters.

A neighborhood of features allows us to define the following composite similarity kernel:
\begin{equation}
    \label{eq:general_kernel}
    \gamma(\bfx, \bfy) = 
    \begin{cases}
        \sum\limits_{\ell = 1}^{\sfL} \alpha_\ell \exp\Big\{ {-}\sigma^{-1} \mathrm{dist}\big( \mathsf{W}_\ell [f](\bfx), \mathsf{W}_\ell [f](\bfy) \big)  \Big\}^{2}
        &\text{if } (\bfx,\bfy) \in \mathrm{supp}(\gamma),
        \\
        0
        &\text{otherwise},
    \end{cases}
\end{equation}
where \( \{\alpha_\ell\}_{\ell \in \llb 1,\sfL\rrb} \) is a family of positive weights, the \emph{shape} or \emph{filtering} parameter \( \sigma > 0\) controls how much information from the distance between \( \mathsf{W}_\ell [f] (\bfx)\) and \( \mathsf{W}_\ell[f](\bfy)\) is retained, and \( \mathsf{W}_\ell [f] (\bfx) = \{ w[f](\mathbf{z}): \, \mathbf{z} \in \mathsf{W}_\ell (\bfx)  \} \) with \(w\) some vector field that may or not depend on \(\ell\) for \( \ell \in \llb 1,\sfL\rrb\). The formalism \cref{eq:general_kernel} extends other classical kernels like the Sigma or Yaroslavsky \& Lee kernel (\(\sfL = 1\), \( \mathsf{W}_1(\bfx) = \{\bfx\}\), \( \alpha_1 = 1\), \( w[f] = f\), and \( \mathrm{dist} = |\cdot|\)) \cite{Lee1983,Yaroslavsky1985}, the SUSAN kernel (\(\sfL = 1\), \( \mathsf{W}_1(\bfx) = \{\bfx\}\), \( \alpha_1 = 1\), \( w[f](\bfx) = \begin{bsmallmatrix} f(\bfx) & (\nicefrac \sigma \rho)\bfx \end{bsmallmatrix} \), and \( \mathrm{dist} = \|\cdot\|_2\)) \cite{Smith1997}, and the NLM kernel (\(\sfL = 1\), \( \mathsf{W}_1(\bfx) = B(\bfx; \rho)\), \( \alpha_1 = 1\), \( w[f] = f \), and \( \mathrm{dist} = \|\cdot\|_{L^2_{G_a}(\R)}\)) \cite{Buades2005,Buades2005b}. In this work, we are interested in the unnormalized extended Gaussian ANOVA kernel \cite{Nestler2022} which obtains a composite similarity from windows of fixed size. To be precise, fix \(\sfL \geq 1\) and for each \( \ell \in \llb 1,\sfL \rrb\) define \( \alpha_\ell = \sfL^{-1}\), take \( \mathsf{W}_\ell(\bfx) \) as a discrete subset of \(\Omega\) of size \(d_{2,\ell}\), and define \( w[f] = f\). As a result, for any \((\bfx,\bfy) \in \Omega \times \Omega\), we have
\begin{equation}
    \label{eq:continuous_ANOVA}
    \gamma(\bfx,\bfy) = \frac{1}{\sfL} \sum_{\ell=1}^\sfL \exp \Big\{ {-}\sigma^{-2} \big\| \mathsf{W}_\ell[f](\bfx) - \mathsf{W}_\ell[f](\bfy)  \big\|_2^2 \Big\}.
\end{equation}
In \cite{Nestler2022}, \cref{eq:continuous_ANOVA} was introduced to overcome the computational complexity of computing and storing neighborhood kernels over high--dimensional sets of features. This task can be achieved efficiently for windows of size \( d_{2,\ell} \leq 3\) using the NFFT, which we introduce in \cref{sec:NFFT_Gauss}. The core principle lies in transferring the cost of computing the distance between large--sized windows with local information to instead compute additional terms in the composite kernel with relevant information.

\subsection{Discretization}

Since we are interested in finding a solution that can display discontinuities, following the approach in \cite{D’Elia2021}, we use a piece--wise constant finite element basis to approximate \(u\) in \(L^2(\Omega)\). Specifically, consider the partition of the image domain \(\Omega\) in the natural pixel grid of \(n_1 \times n_2\) single--pixel squares. For each square we associate a unit step function, the indicator of the pixel. Let us denote the induced finite element method (FEM) functional space by \(V^h\). Here, collect \(\bfu^h\) and \(\bff^h \in \R^n\) as the discretizations of \(u\) and \(f\) in \(V^h\), where \(n \coloneqq n_1 n_2\) is the computational dimension. Let \( \{\bfx_i^h\}_{i\in \llb 1,n\rrb} \subset \Omega \) denote the elements (pixels) in the image domain. Moreover, denote \( \gamma^h_{i,j} \) as the FEM approximation at the pixels \( \bfx_i^h\) and \( \bfx_j^h\), which we collect in the matrix \(\Gamma^h = (\gamma_{i,j}^h)_{(i,j)}\). Then the nonlocal product \( -\mathcal{L} u \) is discretized as
\(
    \big[ \pi_{V^h} (-\mathcal{L} u) \big]_i 
    = 
    u_i \sum\limits_{j=1}^n \gamma^h_{i,j} - \sum\limits_{j=1}^n \gamma^h_{i,j} u_j
\)
for each \( i \in \llb 1,n\rrb\). Let us denote \( \etab^h \coloneqq \Gamma^h \Ones\), then the nonlocal transformation can be represented as \( L^h \coloneqq \diag( \etab^h ) - \Gamma^h\) (notice the sign change). Here, \(L^h\) is nothing else than a graph Laplacian associated to a graph with weights given by \(\Gamma^h\). Then, the lower--level optimality condition \cref{eq:Euler-Lagrange} can be written as
\begin{equation}
\label{eq:discretized-state-h}
	A^h \bfu^h = \big( \lambda \Idn + \mu ( \diag(\etab^h ) - \Gamma^h) \big) \bfu^h = \lambda  \mathbf{f}^h, 
\end{equation}
which we will refer to as the \emph{nonlocal system}. Due to the nature of a graph Laplacian, the system is ill--conditioned for small values of \(\lambda\). We will develop a method for effectively preconditioning \cref{eq:discretized-state-h} in \cref{sec:PreconditioningUnderChangeOfBasis}.

The FEM discretization of \(\gamma\) yields the discrete unnormalized extended Gaussian ANOVA kernel
\begin{equation}
    \label{eq:discrete_ANOVA}
    \gamma^h_{i,j} = \frac{1}{\sfL} \sum_{\ell=1}^\sfL \exp \Big\{ {-}\sigma^{-2} \big\| (\mathsf{W}_{\ell} [\bff^h])_i - (\mathsf{W}_{\ell} [\bff^h])_j  \big\|_2^2 \Big\}.
\end{equation}
Often we will refer to \(\Gamma^h\) just as the ANOVA or the nonlocal kernel and each summand in \cref{eq:discrete_ANOVA} as a \emph{subkernel}. At this point, notice that if \(\gamma^h_{i,i} \neq 0\), then \( (L^h)_{i,i} = \sum\limits_{j=1}^n \gamma^h_{i,j} - \gamma_{i,i} = \sum\limits_{j\neq i} \gamma_{i,j}^h\), thus the diagonal of \(\Gamma^h\) does not contribute to the reconstruction process. As a result, from now on, we will assume that the diagonal of \( \Gamma^h\) only contains zero entries. Moreover, we will drop the super--index \(h\) keeping in mind that all the bold or capitalized quantities are discrete approximations in \(V^h\). We delay the construction of the similarity windows \(\mathsf{W}_\ell\) to \cref{sec:Numerics}.

\section{NFFT--based Fast Gauss Transform}\label{sec:NFFT_Gauss}

In this section, we provide a brief overview of the Nonequispaced Fast Fourier Transform and the fast summation method for the Gauss Transform. The NFFT is an extension of the classical FFT algorithm \cite{Dutt1993,Duijndam1999,Steidl1998,Kunis2006} to evaluate the trigonometric polynomial
\begin{equation}
    \label{eq:NFFT}
    f(\bfx) \coloneqq \sum_{\mathbf{k} \in \mathbf{I}_\mathbf{M}} \widehat{f}_\mathbf{k} e^{-2\pi i \, \mathbf{k}^\top \bfx}
\end{equation}
at the nonequispaced (i.e., arbitrary, often irregularly distributed) nodes \( \{\bfx_j\}_{j \in \llb 0,N-1\rrb} \subset \mathbb{T}^d\) from a finite number of Fourier coefficients \( \{ \widehat{f}_{\mathbf{k}} \}_{ \mathbf{k} \in I_{M} } \subset \C\), where \(\mathbf{M} \in 2 \mathbb{N}^{d} \) and  \( \mathbf{I_{M}} \coloneqq \prod\limits_{s=1}^d \{-\nicefrac{m_s}{2}, \ldots, \nicefrac{m_s}{2} -1 \}\). The Torus is defined as \( \mathbb{T} \coloneqq \R \pmod \Z \simeq [ -\nicefrac{1}{2},\nicefrac{1}{2} )\).  The adjoint NFFT instead interchanges the rôles of the Fourier coefficients and node evaluation in \cref{eq:NFFT} with a sign change in the argument of each exponential term.

The NFFT algorithm makes use of an oversampled FFT and the convolution theorem to yield an approximation of \cref{eq:NFFT} with arithmetic complexity \(\mathcal{O}( |\mathbf{I}_\mathbf{M}| \log |\mathbf{I}_\mathbf{M}| + N ) \) \cite{Kunis2008}. Notice that the computation time of the NFFT increases considerably with \(d\) due to the exponential relationship \( \min \mathbf{M}^d \leq |\mathbf{I}_\mathbf{M}|\). Moreover, the NFFT algorithm deliberately incorporates a systematic error into its calculations to maintain its advantageous computational efficiency. This extra error is manageable and, if required, can be minimized to the level of machine precision \cite{Keiner2009}. For more information on the algorithm and the controlling parameters, we refer to \cite{Keiner2009,Keiner2007,Kunis2006}.

The discrete Gauss Transform computes sums of the form
\begin{equation}\label{eq:DGT}
    g(\bfx) \coloneqq \sum_{j = 0}^{N_y - 1} \alpha_j e^{-\hat{\sigma} \| \bfx - \bfy_j \|^2_2},
\end{equation}
for a given shape parameter \(\hat{\sigma} \in \C\) with \( \Re(\hat{\sigma}) > 0\) and the sets of coefficients \( \{\alpha_j\}_{j\in \llb 1, N_y\rrb}\), source nodes \( \{\bfy_j\}_{j\in \llb 1, N_y\rrb} \subset \R^d\), and target nodes \( \{\bfx_j\}_{j\in \llb 1, N_x\rrb} \subset \R^d\).  In \cite{Kunis2006a}, the Fast Gauss Transform (FGT) was presented as a fast summation method to approximate \cref{eq:DGT} regardless of the distribution of the source and target nodes. The method is based on the application of an adjoint NFFT to \(\alpha\),  followed by a product with Fourier coefficients related to a truncation of a periodic approximation of the shape function \( e^{-(\nicefrac{r}{\hat\sigma})^2} \), and a forward NFFT \cite{Potts2003,Potts2004}. The complexity of the method is \( \mathcal{O}( N_x + N_y + N_d \log N_d )\), where \(N_d\) is the expansion degree of the Fourier coefficients in the NFFT. The value of \(N_d\) is only related to the expected accuracy of the method and is independent of \(N_x\) and \(N_y\); thus, the FGT behaves as \( \mathcal{O}(N_x + N_y)\) \cite[\S4]{Kunis2006}. Notwithstanding, the associated cost of the constant in this complexity computation suffers from the curse of dimensionality. As a result, the FGT is an efficient algorithm for small--sized feature size \(d\). 

Recalling the ANOVA kernel \cref{eq:discrete_ANOVA}, the FGT can be used to efficiently approximate the action of \(\Gamma\) on a vector \(\bfu \in \R^n\) by successive application of \(\sfL\) FGTs. Due to the curse of dimensionality, the authors in \cite{Nestler2022} recommend the use of \(\sfL\) windows of size less than four, where each window samples the feature dimension, thus overcoming the computational limitation of the expansion degree. As a result, the computational complexity of evaluating \(\Gamma\bfu\) is \( \mathcal{O}(\sfL n)\). Error estimates on the approximation are provided in \cite{Potts2004,Kunis2006a,Kunis2006}, where parameter studies are performed.

\section{Preconditioning under a new basis}\label{sec:PreconditioningUnderChangeOfBasis}

Let us motivate this section by considering an ill--con\-dition\-ed system of linear equations, to be solved by an iterative method, including Krylov subspace solvers such as the Conjugate Gradient (CG) \cite{cg} or Minimum Residual (MINRES) \cite{minres} methods. We emphasize that our approach may be embedded within any such solver which requires a fixed (low) number of matrix--vector applications. It is well-known that even if the condition number is large, these methods can perform well if the eigenvalues are clustered in few and small intervals \cite{Kelley1995,Greenbaum1997}. The smaller and further away the clusters are from the origin and each other, the better \cite{Wathen_2015,Alger2019}. Often, if we wish to obtain clustered eigenvalues, we require the use of a \emph{preconditioner}. Here, we are interested in the case where a matrix has an eigenvalue too close to the origin, such that this eigenvalue prevents an `ideal' clustering of the eigenvalues of the preconditioned system because (a) it drags down the remainder of the eigenvalues, or (b) it results in the smallest eigenvalue being lower for the preconditioned system than the original system. This occurs when preconditioning the nonlocal system \cref{eq:discretized-state-h}. In what follows, we propose a methodology to resolve this issue by isolating the smallest eigenvalue of a matrix without modifying the rest of the spectrum. We introduce and derive explicit transformations which can be easily implemented within any iterative framework, and show their computational complexity to be linear in time, hence extending their applicability for large and dense linear systems.

\subsection{Change of basis}\label{sec:change_of_basis}

Let \( (\lambda, \bfv) \) be an eigenpair of a Hermitian (or real symmetric) matrix \( A\in \mathcal{M}_n(\F)\)\footnote{%
Most of the following results can be generalized to the non--Hermitian case without loss of generality. However, we focus on the Hermitian case for the relationship between the spectrum of \(A\) and its condition number.}. By definition, we know that \( \dim \bfv^\perp = n-1\); as a result, we can build a matrix \( Q = (\bfq_i)_{i \in \llb 1,n \rrb}\) such that \( \bfq_1 = \bfv\) and \( \{\bfq_i\}_{i \in \llb 2,n \rrb} \) is a basis in \( \bfv^\perp \). Due to linear independence, the matrix \(Q\) is invertible. As a result, we can conjugate \(A\) by the change of basis given by \(Q\) and define \( \UnitSim[Q]{A} \coloneqq Q^{-1} A\, Q \). 

The matrix \( \UnitSim[Q]{A} \) has a particular structure that is useful to solve ill--conditioned systems. To see this, let us start by finding its first column through analyzing the product \( \UnitSim[Q]{A} \bfe_1\). We know that \( Q \bfe_1 = \bfv\), and since this is an eigenvector of \(A\), we have that \( AQ\bfe_1 = \lambda \bfv\). Now, since \( \bfv = \bfq_1\), it holds that \( Q^{-1} \bfv = \bfe_1\), thence \( \UnitSim[Q]{A} \bfe_1 = \lambda \bfe_1\). As a result, we can assert that \( Q \) is a change of basis that maps the eigenvector \(\bfv\) to \( \bfe_1\). Notice that the first row of \( \UnitSim[Q]{A} \) is determined by the first row of \( Q^{-1} \), labeled \( \mathbf{r}_1\), in the following fashion:
\begin{align}
    (\UnitSim[Q]{A})_{1,1:n} = 
    \mathbf{r}_1 A Q &= 
    \begin{bmatrix}
        \langle \lambda \bfv, \bfr_1^\dagger \rangle & \langle A\bfq_2, \bfr_1^\dagger \rangle & \cdots & \langle A\bfv_n, \bfr_1^\dagger \rangle
    \end{bmatrix} 
    \label{eq:FirstRowOfTransformedSystem}
    = 
    \begin{bmatrix}
        \lambda & \langle A\bfq_2, \bfr_1^\dagger \rangle & \cdots & \langle A\bfv_n, \bfr_1^\dagger \rangle
    \end{bmatrix}.
\end{align}
Let us define \( \mathbf{b}_1 \coloneqq \big( \langle A\bfq_i, \bfr_1^\dagger \rangle \big)_{i \in \llb 2,n\rrb } \) and \(B_2 \coloneq (\UnitSim[Q]{A})_{2:n,2:n} \).
We can thus write \(\UnitSim[Q]{A}\) as the block matrix
\begin{equation}
\label{eq:GeneralChangeOfBasisOfA}
	\UnitSim[Q]{A} =
	\left[
	\begin{array}{@{} c ;{2pt/2pt} c c c c c c c}
		 \lambda & \mathbf{b}_1
		\\[0.1em]
		\cdashline{1-2}[2pt/2pt]
		\\[-0.9em]
		\mathbf{0}_{n-1} &  B_2
	\end{array}
	\right].
\end{equation}
Observe that since \(\bfe_1\) is an eigenvector of \(\UnitSim[Q]{A}\), then \( \Sigma (B_2) = \Sigma(A) \setminus \{\lambda\}\) when \(\lambda\) is simple. As a result, the block structure of \cref{eq:GeneralChangeOfBasisOfA} is useful when we try to solve systems of equations of the form \( A \bfu = \bff\) that suffer from ill--conditioning (either by having a very small minimum eigenvalue or a very large spectral radius). To see this, notice that we can select \( \lambda\) accordingly and solve the equivalent system \( \UnitSim[Q]{A} \bfx = Q^{-1} \bff = \bfg\). We can easily recover a solution in the original basis by the transformation \( \bfu = Q \bfx\). By construction, if we let \( \bfu = (u_1 \,\, \bfu_{2:n}^\top)^\top\) and \( \bfg = (g_1 \,\, \bfg_{2:n}^\top)^\top\), then \( \bfu_{2:n} = B_2^{-1} \bfg_{2:n}\) but also \( u_1 = \lambda^{-1} [ g_1 - \mathbf{b}_1 \bfu_{2:n} ]\). The system \( B_2 \bfu_{2:n} = \bfg_{2:n}\) can be solved using either a direct or iterative method.

It turns out that since \(A\) is Hermitian, then \( \mathbf{b}_1\) is always a vector of zeros. To see this, recall that \( \langle \bfv , \bfr_1^\dagger\rangle = 1\), and the orthogonality of \( \bfv^\perp\) implies that we can only have \( \bfr_1 = \|\bfv\|^{-1} \bfv^\dagger\).
Now, for any index \( i \in \llb 2,n\rrb\), it holds that
\[
	\langle A\bfq_i, \bfr_1^\dagger \rangle =  \langle \bfq_i, A \bfr_1^\dagger \rangle 
	= \frac{1}{\|\bfv\|} \langle \bfq_i, A \bfv\rangle = \frac{\lambda}{\|\bfv\|} \langle \bfq_i , \bfv \rangle = 0.
\]

A particular case of interest is when \(Q\) is unitary or orthogonal  whenever \( \F = \R\). Under these assumptions, we obtain that \( \UnitSim[Q]{A}\) and \(B_2\) are then Hermitian (symmetric). 
The unitary case is a special case of \emph{deflation} and is used for solving eigenvalue problems \cite[Lemma 7.1.2]{Golub2013}. In particular, if we find a matrix \(Q\) that is Hermitian (symmetric), we can induce a deflation under an orthogonalization process which simplifies due to the additional structure.
Moreover, the condition number of \(B_2\) is just
\[ 
\kappa (B_2) = 
\begin{cases}
    \left| \frac{\rho(A)}{a(A)} \right| & \text{if } \lambda = \min \big| \Sigma(A) \big|,
    \\[0.5em]
    \frac{\max |\Sigma(B_2)|}{\min |\Sigma(A)|} & \text{if } \lambda = \max \big| \Sigma(A) \big|,
\end{cases}
\]
which can result in better--conditioned systems, depending on the distribution of the spectrum of \(A\). A basic requirement is for \(\lambda\) to be simple.

\subsubsection{An explicit transformation}
A relevant question might arise at this point: \emph{can we define a general sequence \( \{\bfq_i\}_{i \in \llb 2,n \rrb} \) such that \(Q\) is Hermitian?}
To affirmatively answer this question, assume \( v_1 \) to be real and nonzero. We then propose the following sequence\footnote{The constant \(v_1\) used in \( \bfq_i = -v_1 \bfe_i\) for the case \( v_i = 0\) is optional, as we can select any nontrivial real vector in \(\spn \{\bfe_i\}\).}:
\begin{equation}
\label{eq:Hermitian_Q_Construction}
    \bfq_i \coloneqq
    \overline{v}_i \bfe_1 - v_1 \bfe_i 
    =
    \begin{cases}
        \overline{v}_i \bfe_1 - v_1 \bfe_i & \text{if } v_i \neq 0,
        \\
        -v_1 \bfe_i & \text{otherwise}, 
    \end{cases}
    \qquad \forall i \in \llb 2,n\rrb.
\end{equation}
If \(v_1\) has a nonzero imaginary part, we can instead proceed with the eigenvector \( \bfu \coloneqq \overline{v}_1 \bfv \), which has a real first entry. Moreover, in the case where \(v_1 = 0\), we can just replace \( v_1 \) in \cref{eq:Hermitian_Q_Construction} with \( v_\imath\), where \({\imath = \min\limits_{v_i \neq 0} i} \).

\begin{lemma}
    The matrix \( Q = (\bfq_i)_{i \in \llb 1,n \rrb} \) is Hermitian and invertible.
\end{lemma}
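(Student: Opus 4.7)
My plan is to first write out the matrix $Q$ entry by entry, since the columns have a simple arrowhead-type structure. With $\bfq_1 = \bfv$ and $\bfq_i = \overline{v}_i\bfe_1 - v_1\bfe_i$ for $i \in \llb 2,n\rrb$ (where the $v_i=0$ branch of \cref{eq:Hermitian_Q_Construction} is consistent with this uniform formula since then $\overline{v}_i = 0$), the matrix $Q$ has $v_1$ in the top-left corner, the vector $(\overline{v}_2, \ldots, \overline{v}_n)$ in the first row, the vector $(v_2, \ldots, v_n)^\top$ in the first column, the constant $-v_1$ on the remaining diagonal entries, and zeros elsewhere.

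For the Hermitian property, I will verify $Q_{ij} = \overline{Q_{ji}}$ case by case. The diagonal entries $Q_{11} = v_1$ and $Q_{jj} = -v_1$ for $j \geq 2$ are real by the assumption that $v_1 \in \R$. The off-diagonal pairs $Q_{1j} = \overline{v}_j$ and $Q_{j1} = v_j$ are conjugate to each other by construction, and all remaining off-diagonal entries are zero on both sides. This step is routine bookkeeping.

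For invertibility, I plan to compute $\det Q$ via the Schur complement, exploiting the fact that the $(n-1)\times(n-1)$ trailing block is $D = -v_1\, \Idn[n-1]$, which is invertible since $v_1 \neq 0$. Setting $\bfb = (\overline{v}_2,\ldots,\overline{v}_n)^\top$ and $\bfc = (v_2,\ldots,v_n)^\top$, the Schur complement formula gives
\begin{equation*}
    \det Q = \det(D)\bigl( v_1 - \bfb^\dagger D^{-1} \bfc \bigr) = (-v_1)^{n-1}\Bigl( v_1 + \frac{1}{v_1}\sum_{i=2}^n |v_i|^2 \Bigr) = (-1)^{n-1} v_1^{n-2}\,\|\bfv\|_2^2,
\end{equation*}
where I used $v_1^2 = |v_1|^2$ because $v_1$ is real. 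Since $\bfv$ is an eigenvector we have $\|\bfv\|_2 \neq 0$, and $v_1 \neq 0$ by assumption, so $\det Q \neq 0$.

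The main (minor) obstacle is simply making sure the reduction $v_1 \in \R\setminus\{0\}$ is not a loss of generality, which is already handled by the footnotes in the statement: rescaling $\bfv$ by $\overline{v}_1$ yields another eigenvector with real leading entry, and if $v_1$ vanishes one relabels to use the first nonzero component. Everything else follows from a direct Schur complement computation on the arrowhead structure.
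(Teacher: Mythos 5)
Your proposal is correct and follows essentially the same route as the paper: the Hermitian part is the same bordered/arrowhead bookkeeping leading to \cref{eq:Q_blocks_structure}, and your invertibility argument is exactly the paper's second argument via the block-determinant (Schur complement) formula \cref{eq:detQ}, giving \( \det Q = (-v_1)^{n-1}\big(v_1 + v_1^{-1}\|\bfv_{2:n}\|_2^2\big) \neq 0 \). One notational nit: with your \( \bfb = \overline{\bfv_{2:n}} \), the term \( \bfb^\dagger D^{-1}\bfc \) literally reads \( -v_1^{-1}\sum_{i=2}^n v_i^2 \); the Schur complement should pair the first-row block with the first-column block, i.e.\ \( \bfv_{2:n}^\dagger D^{-1}\bfv_{2:n} = -v_1^{-1}\|\bfv_{2:n}\|_2^2 \), which is the quantity your final displayed formula (correctly) uses.
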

\begin{proof}
    To see this, observe that the matrix \(Q\) is sparse as it only has nonzero values on its diagonal and its first row and column. In particular, the diagonal values are real due to our choice of \(v_1\) and the fact that the canonical basis vectors are real. For the remaining off--diagonal values, we have that \( \bfr_1 = \bfv^\dagger \) by definition. Hence \(Q\) is Hermitian and can be written as the \emph{bordered} or \emph{block} matrix
    \begin{align}
    \label{eq:Q_blocks_structure}
	Q = 
    \left[
	\begin{array}{@{} c ;{2pt/2pt} c c c c @{}}
        v_1
        &  \bfv^\dagger_{2:n} 
		\\[0.1em]
        \cdashline{1-2}[2pt/2pt]
        \multirow{2}{*}{  \vspace{1em}  $\bfv_{2:n}$     }
        \\[-1em]
        & -v_1 \Idn[n-1]
	\end{array}
    \right]
    .
	\end{align}

    To show invertibility, consider a linear combination of the following form (with constants $\alpha_i$):
    \[
        0 = \sum_{i=2}^n \alpha_i \bfq_i = \sum_{i=2}^n \alpha_i ( \beta_{1,i} \bfe_1 + \beta_{2,i} \bfe_i) \eqqcolon \alpha_1 \bfe_1 + \sum_{i=2}^n (\alpha_i \beta_{2,i}) \bfe_i.
    \]
    As the values \( \beta_{2,i}\) are nonzero, we know that by linear independence of the canonical basis that the set \( \{\bfq_i\}_{i \in \llb 2,n \rrb} \) is linearly independent. Moreover, we have that by orthogonality all columns in \(Q\) are linearly independent, thus \(Q\) is invertible. Another way to see this is by the elegant formula \cite{Silvester2000}:
    \begin{equation}
    \label{eq:detQ}
        \det(Q) = 
        \det\Big( v_1 - \bfv_{2:n}^\dagger 
        ( -v_1 \Idn[n-1] )^{-1}
        \bfv_{2:n} \Big) 
        \det( -v_1 \Idn[n-1] ).
    \end{equation}
    Clearly \( \det( -v_1 \Idn[n-1] ) = (-v_1)^{n-1}\), and the first multiplicand in \cref{eq:detQ} is just \( v_1 + v_1^{-1} \| \bfv_{2:n} \|^2\) and none of these terms are zero as \(v_1\) is nonzero and real.
\end{proof}

We can furthermore explicitly compute the inverse of \(Q\) by means of the Schur complement. Letting \( D \coloneqq  -v_1 \Idn[n-1]\), we have that
\[
    Q/D \coloneqq v_1 - \bfv_{2:n}^\dagger D^{-1} \bfv_{2:n} =  v_1 + v_1^{-1} \| \bfv_{2:n} \|^2.
\]
Thence\footnote{Notice that for the case \( v_1 = 0 \), a similar sparse diagonal--block structure as in \cref{eq:Q_blocks_structure} can be derived. The first diagonal block would be \( -v_{\imath} \Idn[\imath-1]\), while the other block is just \cref{eq:Q_blocks_structure} indexed for \({\imath}\) instead. A similar block structure can be found for the inverse of the block diagonal operator that resembles \cref{eq:Q_Inverse} with an additional direct sum.
} we have the inverse block matrix \cite{Taboga2021}:
\begin{align}
    Q^{-1} &=
\left[
\begin{array}{@{} c ;{2pt/2pt} c c c c @{}}
    (Q/D)^{-1}
    &  -(Q/D)^{-1}  \bfv^\dagger_{2:n} D^{-1}
    \\[0.1em]
    \cdashline{1-2}[2pt/2pt]
    \multirow{2}{*}{  \vspace{1em}  $-D^{-1} \bfv_{2:n} (Q/D)^{-1}$     }
    \\[-1em]
    & D^{-1} + D^{-1} \bfv_{2:n} (Q/D)^{-1} \bfv_{2:n}^\dagger D^{-1}
\end{array}
\right]
\notag
\\
&= (Q/D)^{-1}
\left[
\begin{array}{@{} c ;{2pt/2pt} c c c c @{}}
    1
    & -\bfv^\dagger_{2:n} D^{-1}
    \\[0.1em]
    \cdashline{1-2}[2pt/2pt]
    \multirow{2}{*}{  \vspace{1em}  $-D^{-1} \bfv_{2:n} $     }
    \\[-1em]
    & (Q/D) D^{-1} + D^{-1} \bfv_{2:n}  \bfv_{2:n}^\dagger D^{-1}
\end{array}
\right]
\notag
\\
&= (Q/D)^{-1}
\left[
\begin{array}{@{} c ;{2pt/2pt} c c c c @{}}
    1
    & v_1^{-1} \bfv^\dagger_{2:n}
    \\[0.1em]
    \cdashline{1-2}[2pt/2pt]
    \multirow{2}{*}{  \vspace{1em}  $v_1^{-1} \bfv_{2:n} $     }
    \\[-1em]
    & (Q/D) D^{-1} + v_1^{-2} \bfv_{2:n} \bfv_{2:n}^\dagger 
\end{array}
\right],
\label{eq:Q_Inverse}
\end{align}
where we have that \( (Q/D)^{-1} = v_1 \big/ \|\bfv\|^2 \) and \( D^{-1} = -v_1^{-1} \Idn[n-1]\). 

Notice that we can further simplify \(Q^{-1}\) in \cref{eq:Q_Inverse} as
\begin{align}
    Q^{-1} &=
    \frac{v_1}{\|\bfv\|^2}
    \left[
    \begin{array}{@{} c ;{2pt/2pt} c c c c @{}}
        1
        & v_1^{-1} \bfv^\dagger_{2:n}
        \\[0.1em]
        \cdashline{1-2}[2pt/2pt]
        \multirow{2}{*}{  \vspace{1em}  $v_1^{-1} \bfv_{2:n} $     }
        \\[-1em]
        & v_1^{-2} \bfv_{2:n} \bfv_{2:n}^\dagger 
    \end{array}
    \right]
    +
    \left[
    \begin{array}{@{} c ;{2pt/2pt} c c c c @{}}
        \multirow{3}{*}{  \vspace{1em}  $\mathbf{0}_{n} $     } 
        &  \mathbf{0}_{n-1}^\top
        \\[0.1em]
        \cdashline{2-2}[2pt/2pt]
        \\[-1em]
        & D^{-1}
    \end{array}
    \right]
    \notag
    \\
    &=
    \frac{v_1}{\|\bfv\|^2}
    \begin{bmatrix}
        1 \\ v_1^{-1} \bfv_{2:n}
    \end{bmatrix}
    \otimes
    \begin{bmatrix}
        1 \\ v_1^{-1} \bfv_{2:n}
    \end{bmatrix}^\dagger
    +
    \left[
    \begin{array}{@{} c ;{2pt/2pt} c c c c @{}}
        \multirow{3}{*}{  \vspace{1em}  $\mathbf{0}_{n} $     } 
        &  \mathbf{0}_{n-1}^\top
        \\[0.1em]
        \cdashline{2-2}[2pt/2pt]
        \\[-1em]
        & D^{-1}
    \end{array}
    \right]
    \notag
    \\
    &
    =
    \frac{1}{v_1 \|\bfv\|^2}  
    \bfv
    \otimes
    \bfv^\dagger
    - \diag( 0, v_1^{-1} \Ones[n-1] )
    \label{eq:Q_inverse_compact}
    \\&
    =
    \frac{1}{v_1} \cbr{
    \frac{1}{\|\bfv\|^2} 
    \begin{bmatrix}
	v_1 \overline{v}_1 & v_1 \overline{v}_2 & \cdots & v_1 \overline{v}_n \\
	v_2 \overline{v}_1 & v_2 \overline{v}_2 &  \cdots & v_2 \overline{v}_n \\
	\vdots & \vdots & \ddots & \vdots \\
	v_n \overline{v}_1 & v_n \overline{v}_2 & \cdots & v_n \overline{v}_n
    \end{bmatrix}
    -
    \left[
    \begin{array}{@{} c ;{2pt/2pt} c c c c @{}}
        \multirow{3}{*}{  \vspace{1em}  $\mathbf{0}_{n} $     } 
        &  \mathbf{0}_{n-1}^\top
        \\[0.1em]
        \cdashline{2-2}[2pt/2pt]
        \\[-1em]
        & \Idn[n-1] 
    \end{array}
    \right]
    }.
    \notag
\end{align}
We see from this last expression that \( Q^{-1} \) is just a weighted correction of a projection. Geometrically, \( (\bfv \otimes \bfv^\dagger) / \|\bfv\|^2 \) represents an orthogonal projection onto the direction of \( \bfv \). 
However, this alone is not enough to fully correct the change of basis induced by \( \bfv \). The diagonal term, \( \diag(0, \Ones[n-1]) \), acts as an anisotropic corrector that restores any information not captured in the projection.

A perk of the representation \cref{eq:Q_inverse_compact} is that the action of both \(Q\) and \(Q^{-1}\) can be computed in \( \mathcal{O}(n) \) operations,
which can be instantly seen from 
\[
    Q \bfu  = 
    \begin{bmatrix}
        \langle\bfu,\bfv\rangle \\
        u_1 \bfv_{2:n} - v_1 \bfu_{2:n} 
    \end{bmatrix}
    \qquad\text{and}\qquad
    Q^{-1}\bfu = 
    v_1^{-1} \bigg\{
    \frac{ \langle \bfu, \bfv \rangle }{\|\bfv\|^2} \bfv -
    \begin{bmatrix} 0 \\ \bfu_{2:n} \end{bmatrix} \bigg\}
    .
\]

\subsubsection{Unitary representation}\label{sec:unitary_change}
After determining \(Q\) and its inverse, it may be pertinent to establish a unitary (orthogonal) representation of this change of basis. Assuming the same hypotheses as those that led to \cref{eq:Q_blocks_structure}, we can orthonormalize the columns in \(Q\) to yield a unitary transformation, denoted as \(U\).
This can be done exactly using the Gram--Schmidt process and the fact that \( \bfv \perp Q_{:,2:n}\). To see this, let us build the family \( \{\bfw_{i} \}_{i \in \llb 1,n\rrb} \) with \( \bfw_1 = \bfv\), and let \( \mathcal{I}_\bfv = \{ i \in \llb 2,n\rrb : \, \imath_{v_i = 0 } \}\); i.e., \( \mathcal{I}_\bfv \) contains the indices of \(\bfv\) corresponding to zero entries. We know that in such cases \( \bfq_i = -v_1 \bfe_i\) according to \cref{eq:Hermitian_Q_Construction}, hence we define \( \bfw_i = -v_1 \bfe_i\) for all \( i \in \mathcal{I}_\bfv\) and notice that in such cases \( \bfv \perp \bfw_i \) and \( \|\bfw_i\|_2 = |v_1| \). 
Observe that \( \bfw_2 = \bfq_2\) as well.
Now, for any other index \(i \in \llb 3,n \rrb \setminus \mathcal{I}_\bfv\), we have that
\begin{align*}
    \bfw_i &= \bfq_i -
    \quad \mathclap{\sum_{ \substack{ j \in \llb 2, i-1  \rrb \\ j \notin \mathcal{I}_\bfv} }} \quad 
    \proj_{\bfw_j} (\bfq_i)
    = \overline{v}_i \bfe_1 - v_1 \bfe_i -
    \,\,\,
    \mathclap{\sum_{j = 2}^{i-1}} \quad
    \frac{\langle \bfq_i, \bfw_j\rangle}{ \langle \bfw_j,\bfw_j \rangle } \bfw_j
    \\[-0.5em]
    &=
    - v_1 \bfe_i + \overline{v}_i \Bigg[ \bfe_1 - \sum_{j=2}^{i-1} \|\bfw_j\|^{-2} \overline{w}_{j,1} \bfw_j \Bigg].
\end{align*}
From here we can conclude that the matrix \( U = (\bfw_i / \|\bfw_i\|_2 )_{i\in \llb 1,n \rrb} \) can be written as
\[
    U = 
    \begin{bmatrix}
        \bfv & T_{\bfv}
    \end{bmatrix} 
    \diag \big( \|\bfv\|_2, \|\bfq\|_2, \|\bfw_3\|_2, \ldots, \|\bfw_n\|_2 \big)^{-1}
    ,
\]
where \( T_{\bfv} \) is an upper triangular matrix with constant first lower diagonal equal to \( -v_1\).

We can further find an explicit recurrence relation to compute the entries of \(U\) \cite[\S IX--6, Th. 2]{Gantmacher1959}. To see this, we can take advantage of the sparsity of \(Q\) and compute the Gram determinant
\begin{align*}
    G_i &\coloneqq 
    \det Q_{1:i,1:i} \, Q_{1:i,1:i}^\dagger
    =
    \left|
	\begin{array}{@{} c ;{2pt/2pt} c c c c @{}}
        \| \bfv \|^2 &  \Zeros[i-1]^\top
        \\[0.1em]
        \cdashline{1-2}[2pt/2pt]
        \multirow{2}{*}{  \vspace{1em}  $\Zeros[i-1]$ }& \\[-1em] &\bfv_{2:i} \otimes \bfv_{2:i}^\dagger + v_1^2 \Idn[i-1]
    \end{array}
    \right|
    \\
    &=
    \|\bfv\|^2 \det \big( \bfv_{2:i} \otimes \bfv_{2:i}^\dagger + v_1^2 \Idn[i-1] \big)
    =
    \|\bfv\|^2 v_1^{2(i-1)} \big( \bfv_{2:i}^\dagger \bfv_{2:i}/v_1^2 + 1 \big)
    \\
    &=
    \|\bfv\|^2 v_1^{2(i-1)} \big[ v_1^{-2} \| \bfv_{2:i} \|^2  + 1 \big]
    = v_1^{2(i-2)} \|\bfv_{1:i}\|^2 \|\bfv\|^2  
    ,
\end{align*}
where we have used the Weinstein--Aronszajn identity, which allows us to compute the determinant of a matrix product shifted by the identity. The orthogonal family \( \{\bfw_i\}_{i\in \llb 1,n\rrb} \) can be found by computing the formal determinant
\[
    \bfw_i = \frac{1}{G_{i-1}}
    \left|
	\begin{array}{@{} c }
        Q_{1:i-1, 1:i}  \, Q_{1:i, 1:i}^\dagger
        \\[0.1em]
        \cdashline{1-1}[2pt/2pt]
        (\bfq_j)_{j\in \llb 1,i\rrb}
    \end{array}
    \right|
    = \frac{1}{G_{i-1}}
    \left|
	\begin{array}{@{} c ;{2pt/2pt} c c c c @{}}
        \| \bfv \|^2 &  \Zeros[i-1]^\top
        \\[0.1em]
        \cdashline{1-2}[2pt/2pt]
        \multirow{2}{*}{  \vspace{1em}  $\Zeros[i-2]$ }& \\[-1em] &\bfv_{2:i-1} \otimes \bfv_{2:i}^\dagger + v_1^2 \big[\Idn[i-2] \,\, \Zeros[i-2] \big]
        \\[0.1em]
        \cdashline{1-2}[2pt/2pt]
        \multirow{2}{*}{  \vspace{1em}  $\bfv$ }& \\[-1em] 
        & (\bfq_j)_{j\in \llb 2,i\rrb}
    \end{array}
    \right|,
\]
where we understand the elements of the last rows within these determinants as symbols.
Using Laplace's expansion for the first row, we have
\begin{equation}
\label{eq:Det_GS_Part_1}
    G_{i-1}^{} \|\bfv\|^{-2} \bfw_i = 
    \left|
	\begin{array}{@{} c }
        \bfv_{2:i-1} \otimes \bfv_{2:i}^\dagger + v_1^2 \big[\Idn[i-2] \,\, \Zeros[i-2] \big]
        \\[0.1em]
        \cdashline{1-1}[2pt/2pt]
        (\bfq_j)_{j\in \llb 2,i\rrb}
    \end{array}
    \right|.
\end{equation}
It turns out that we can compute this determinant exactly in terms of the canonical basis. To see this, let \( \{ p_j \}_{j \in \llb 2,i \rrb } \) be a finite set in \( \F\) and consider the matrix
\[
    \left[
	\begin{array}{@{} c }
        \bfv_{2:i-1} \otimes \bfv_{2:i}^\dagger + v_1^2 \big[\Idn[i-2] \,\, \Zeros[i-2] \big]
        \\[0.1em]
        \cdashline{1-1}[2pt/2pt]
        (p_j)_{j\in \llb 2,i\rrb}
    \end{array}
    \right]
    =
    \begin{bmatrix} \bfv_{2:i-1} \\ 0 \end{bmatrix}
    \bfv_{2:i}^\dagger
    +
    \left[
	\begin{array}{@{} c ;{2pt/2pt} c c c c @{}}
        v_1^2 \Idn[i-2] &  \Zeros[i-2]
        \\[0.1em]
        \cdashline{1-2}[2pt/2pt]
        \multirow{2}{*}{  \vspace{1em}  $(p_j)_{j\in \llb 2,i-1\rrb}$ }& \\[-1em] & p_{i}
    \end{array}
    \right]
    \eqqcolon \mathbf{r} \mathbf{s}^\top + C.
\]
The determinant of this rank--one perturbation can be computed exactly as \cite[Eq. (9)]{Marcus1990}
\begin{equation}
\label{eq:det_Perturbation_1}
    \det( C + \mathbf{r} \mathbf{s}^\top )
    = \det(C) + \mathbf{s}^\top \mathrm{adj}(C) \, \mathbf{r},
\end{equation}
where \( \mathrm{adj}(C)\) is the adjugate of \(C\). Taking advantage of the sparsity of \(C\), we obtain
\[
    \det(C) = p_i v_1^{2(i-2)} 
    \qquad\text{and}\qquad
    \mathrm{adj}(C)
    = v_1^{2(i-3)} 
    \left[
	\begin{array}{@{} c ;{2pt/2pt} c c c c @{}}
        p_i \Idn[i-2] &  \Zeros[i-2]
        \\[0.1em]
        \cdashline{1-2}[2pt/2pt]
        \multirow{2}{*}{  \vspace{1em}  $(-p_j)_{j\in \llb 2,i-1\rrb}$ }& \\[-1em] & v_1^2
    \end{array}
    \right].
\]
As a result, we have that \cref{eq:det_Perturbation_1} yields
\begin{align}
    \det( C + \mathbf{r} \mathbf{s}^\top )
    &=    \notag
    v_1^{2(i-2)} \left[ p_i + v_1^{-2} \bfv_{2:i}^\dagger 
    \left[
    \begin{array}{@{} c }
        p_i \Idn[i-2]
        \\[0.1em]
        \cdashline{1-1}[2pt/2pt]
        (-p_j)_{j\in \llb 2,i-1\rrb}
    \end{array}
    \right]
 \bfv_{2:i-1} \right]
    \\
    &=    \notag
    v_1^{2(i-2)} \left[ p_i + v_1^{-2} \begin{bmatrix} \bfv_{2:i-1}^\dagger & \overline{v}_{i} \end{bmatrix}
    \left[
    \begin{array}{@{} c }
        p_i \bfv_{2:i-1}
        \\[0.1em]
        \cdashline{1-1}[2pt/2pt]
        -\sum\limits_{j=2}^{i-1}  v_j p_j
    \end{array}
    \right]
    \right]
    \\
    &=
    v_1^{2(i-3)} 
    \left[
        \|\bfv_{1:i-1}\|^2 p_i - \overline{v}_{i}  \sum_{j=2}^{i-1} v_j p_j
    \right]
    .
    \label{eq:Determinant_GS_Field}
\end{align}
Returning to \cref{eq:Det_GS_Part_1}, notice that we can replace the finite family \( \{p_j\}_{j \in \llb 2,i \rrb} \) in \cref{eq:Determinant_GS_Field} with the family \( \{\bfq_j\}_{j \in \llb 2,i \rrb} \) as defined in \cref{eq:Hermitian_Q_Construction}. However, by the multilinearity of the determinant applied to the last row of \cref{eq:Det_GS_Part_1} or by the linearity of \cref{eq:Determinant_GS_Field} with respect to the \(\{p_j \}_{j \in \llb 2,i \rrb } \), we can express \cref{eq:Det_GS_Part_1} as the difference between two determinants based on \cref{eq:Determinant_GS_Field}, one with the family \( \{\overline{v}_j \bfe_1\}_{j \in \llb 2,i\rrb}\), and another with the family \( \{v_1 \bfe_j\}_{j \in \llb 2,i\rrb}\).
For the former, we have that
\begin{align*}
    v_1^{2(i-3)} &
    \left[
        \|\bfv_{1:i-1}\|^2 \overline{v}_i \bfe_1 - \overline{v}_{i}  \sum_{j=2}^{i-1} v_j \overline{v}_j \bfe_1
    \right]
    =
    \overline{v}_i v_1^{2(i-3)} 
    \left[
        \|\bfv_{1:i-1}\|^2 -  \sum_{j=2}^{i-1} |v_j|^2
    \right] \bfe_1
    \\
    &= 
    \overline{v}_i v_1^{2(i-3)} 
    \big[
        \|\bfv_{1:i-1}\|^2 -  \| \bfv_{1:i-1} \|^2 + v_1^2
    \big] \bfe_1
    = \overline{v}_i v_1^{2(i-2)} \bfe_1.
\end{align*}
Likewise,
\begin{align*}
    v_1^{2(i-3)} &
    \left[
        \|\bfv_{1:i-1}\|^2 v_1 \bfe_i - \overline{v}_{i}  \sum_{j=2}^{i-1} v_j v_1 \bfe_j
    \right]
    =
    v_1^{2i - 5}      \| \bfv_{1:i-1} \|^2
    \bfe_i
    - \overline{v}_i v_1^{2i-5} \sum_{j = 2}^{i-1} v_j \bfe_j.
\end{align*}
Hence we have that
\begin{align*}
    G_{i-1} \|\bfv\|^{-2} \bfw_i &= 
    v_1^{2(i-3)} \bigg[ 
    \overline{v}_i 
    v_1^2 \bfe_1
    + 
    \overline{v}_i v_1 \sum_{j = 2}^{i-1} v_j \bfe_j
    -
    v_1 \| \bfv_{1:i-1} \|^2 \bfe_i \bigg].
\end{align*}
We can compute \( G_{i-1} \|\bfv\|^{-2} = \|\bfv_{1:i-1}\|^2 v_{1}^{2(i-3)}\), hence
\begin{align*}
    \bfw_i &= 
        \frac{ \overline{v}_i v_1^2 }{\|\bfv_{1:i-1}\|^2}
    \bfe_1
    +  \frac{ \overline{v}_i v_1 }{\|\bfv_{1:i-1}\|^2}
     \sum_{j = 2}^{i-1} v_j \bfe_j
    -
    v_1 \bfe_i
    =
    \frac{ \overline{v}_i v_1 }{\|\bfv_{1:i-1}\|^2}
     \begin{bmatrix}
     	\bfv_{1:i-1} \\ \Zeros[n-(i-1)]
     \end{bmatrix}
    -
    v_1 \bfe_i
    .
\end{align*}
The normalization factors can be computed exactly again using the formal determinant as above, where we have
\begin{align*}
    \bfu_i &\coloneqq \frac{\bfw_i}{\|\bfw_i\|} = \sqrt{\frac{ G_{i-1} }{G_i}} \bfw_i = \frac{ \| \bfv_{1:i-1} \| }{ v_1 \|\bfv_{1:i}\| } \bfw_i 
    =
    \frac{1}{ \|\bfv_{1:i}\| } \bigg[
    \frac{ \overline{v}_i }{\|\bfv_{1:i-1}\|}
     	\begin{bmatrix}
     		\bfv_{1:i-1} \\ \Zeros[n-(i-1)]
	\end{bmatrix}
         -    \|\bfv_{1:i-1}\| \bfe_i \bigg]
\end{align*}
for all \( i \in \llb 2,n \rrb\). Now, let us define the two \((n-1)\)--dimensional vectors
\[ 
    \bfa_{\bfv} \coloneqq \big( \|\bfv_{1:i-1}\| \,  \|\bfv_{1:i}\|^{-1} \big)_{i\in \llb 2,n\rrb } 
    \qquad\text{and}\qquad
    \bfb_{\bfv} \coloneqq \big(  \|\bfv_{1:i-1}\| \, \|\bfv_{1:i}\|  \big)^{\circ -1}_{i \in \llb 2,n \rrb},
\]
where \((\,\cdot\,)^{\circ-1}\) is the component--wise Hadamard multiplicative inverse. 
Then, the unitary matrix \(U \coloneqq (\bfu_i)_{i \in \llb 1,n\rrb}\) can be written in the following form:
\begin{equation}\label{eq:UnitaryGeneralForm}
    U =  
    \diag(\bfv) 
    \left[
	\begin{array}{@{} c ;{2pt/2pt} c}
		 \Ones & \mathrm{Triu}_{0}( \Ones[n,n-1] ) 
    \end{array}  \hspace{-0.35em}
    \right]
    \diag\big( \|\bfv\|^{-1}, \,\overline{\bfv}_{2:n} \circ \bfb_{\bfv} \big)
    -
    \diag\big( 0, \, \bfa_{\bfv} \big).
\end{equation}
Similarly to \(Q^{-1}\), we can understand \(U\) as the difference between a matrix that arranges most of the information in \(\bfv\) and a correction factor:
\begin{align*}
	U &=
    \begin{bmatrix}
	v_1   \\
	 & \hspace{-1.25em}  v_2  &   \\
	 &  & \hspace{-1.25em}  \ddots & \\
	 & &  & \hspace{-1.25em}  v_n 
    \end{bmatrix}
    \left[\arraycolsep=1.6pt
	\begin{array}{@{} c ;{2pt/2pt} r c c c c c c}
		 \multirow{6}{*}{   $ \Ones$     }
		 &  \hphantom{0} 1 & 1 & \cdots & 1
		\\
		& 0 & 1 & \ddots & \multirow{1}{*}{   $ \vdots$     }
		\\
		& \multirow{1}{*}{   $ \vdots$     } & \ddots & \ddots
		\\
		& & &  0 & 1
		\\ \cdashline{2-5}[2pt/2pt]
		& 0 & 0 & \cdots &  0 
	\end{array}
	\right]
    \begin{bmatrix}
	\|\bfv\|^{-1} \vspace{-0.25em}    \\
	 & \hspace{-1.75em} \overline{v}_2   \big(  |v_{1}| \, \|\bfv_{1:2}\|  \big)^{ -1} \vspace{-0.25em}    \\
	 && \hspace{-1.75em} \overline{v}_3   \big(  \|\bfv_{1:2}\| \, \|\bfv_{1:3}\|  \big)^{ -1}  \vspace{-0.5em}  \\
	 &  && \hspace{-1.em} \ddots &    \\
	 &  &  && \hspace{-1.75em} \overline{v}_n \big(  \|\bfv_{1:n-1}\| \, \|\bfv\|  \big)^{ -1}
    \end{bmatrix}
    \\
    &\qquad  - \hspace{1em}
    \begin{bmatrix}
	0   \\
	 & |v_{1}| \,  \|\bfv_{1:2}\|^{-1}   \\
	 && \hspace{-1em} \|\bfv_{1:2}\| \,  \|\bfv_{1:3}\|^{-1} \\
	 &  && \hspace{-1em} \ddots &  \\
	 &  &&& \hspace{-1em} \|\bfv_{1:n-1}\| \,  \|\bfv\|^{-1}
    \end{bmatrix} 
    .
\end{align*}

A nice outcome of finding \cref{eq:UnitaryGeneralForm} is that its action on a vector can be computed in a similar time as \(Q\):
\begin{align*}
    U \bfx  &=
    \bfv \circ
    \left[
	\begin{array}{@{} c ;{2pt/2pt} c c c c c c c}
		 \Ones & \mathrm{Triu}_{0}( \Ones[n,n-1] )
    \end{array}  \hspace{-0.35em}
    \right]
    \begin{bmatrix}
        \| \bfv\|^{-1} 
        \\
        \overline{\bfv}_{2:n} \circ \bfb_{\bfv}
    \end{bmatrix} 
    \circ \bfx
    -
    \begin{bmatrix} 0 \\ \bfa_{\bfv} \end{bmatrix} \circ \bfx
    \\
    &=
    \bfv \circ
    \textrm{SCS}_{\uparrow} \left(
    \begin{matrix}
        \| \bfv\|^{-1} x_1
        \\
        \overline{\bfv}_{2:n} \circ \bfb_{\bfv} \circ \bfx_{2:n}
    \end{matrix} 
    \right)
    -    \begin{bmatrix} 0 \\ \bfa_{\bfv} \circ \bfx_{2:n} \end{bmatrix},
\end{align*}
where \(\textrm{SCS}_{\uparrow}\) computes a translated shifted backwards cumulative sum given by:
\[
    \textrm{SCS}_{\uparrow}(\bfx)
    \coloneqq
    x_1 + 
    \left[
	\begin{array}{@{} c ;{2pt/2pt} c c c c c c c}
    \bigg(\sum\limits_{j=i+1}^n x_j\bigg)_{i\in \llb 1,n-1\rrb} & 0
    \end{array}  \hspace{-0.35em}
    \right]^\top    \hspace{-0.35em}.
\]
We can instantly observe that \(U\bfx\) can be computed in \( \mathcal{O}(n) \) time as it only involves Hadamard products of vectors and a cumulative sum.
Moreover, the adjoint of this linear operator is just a shifted cumulative sum
\[
    \textrm{SCS}_{\downarrow} (\bfx)
    \coloneqq
    \left[
	\begin{array}{@{} c ;{2pt/2pt} c c c c c c c}
        \sum\limits_{j=1}^{n} x_j &
        \bigg(\sum\limits_{j=1}^{i-1} x_j\bigg)_{i\in \llb 2,n-1\rrb}
    \end{array}  \hspace{-0.35em}
    \right]^\top    \hspace{-0.35em}.
\]
The latter expression allows us to obtain the action of the adjoint \( U^\dagger\):
\begin{align*}
    U^\dagger \bfx  &=
    \begin{bmatrix}
        \| \bfv\|^{-1} 
        \\
        \bfv_{2:n} \circ \bfb_{\bfv}
    \end{bmatrix}
    \circ
    \textrm{SCS}_{\downarrow} ( \overline{\bfv} \circ \bfx )
    -    \begin{bmatrix} 0 \\ \bfa_{\bfv} \circ \bfx_{2:n} \end{bmatrix}.
\end{align*}

\subsection{Application to graph Laplacians}
\label{sec:CoB_GL}

In the previous section we constructed an exact change of basis that allowed us to express any Hermitian matrix as a block matrix based on a known eigenpair. In what follows, we will apply our methodology to analyze scalings of graph Laplacians and derive some spectral properties of these operators.

Let us now recall the nonlocal system
\begin{equation}
\label{eq:discretized-state}
	A \bfu = \big( \lambda \Idn + \underbrace{ \mu \overbrace{( \diag(\etab) - \Gamma)}^{\eqqcolon L} }_{\eqqcolon B} \big) \bfu = \lambda  \mathbf{f}, 
\end{equation}
where \(\lambda,\mu > 0\), \( \mathbf{f} \in \R^n\), \( \etab \in \R^n\) is just the sum of \( \Gamma\) by rows; i.e., \( \etab \coloneqq \Gamma \Ones\), and \(\Gamma\) is an indefinite symmetric matrix with entries in \([0,1]\), with zero diagonal and at least one nonzero strictly positive off--diagonal entry per row.

We promptly identify \(L\) as a graph Laplacian, and, by linearity, we have that \(B\) is also a graph Laplacian and \(A\) is a constant diagonal perturbation of \(B\). The following properties follow suit: 
\begin{itemize}
    \item Our assumptions on \(G\) imply that \(L\) is the graph Laplacian of a connected graph\footnote{For the disconnected case, we can start finding each connected component of the graph, then the theory that follows is applicable to each connected component.}.
    \item \(L\), \(B\), and \(A\) are \(M\)--matrices; i.e., with positive diagonal and non--positive off-diagonal entries. 
    \item \(L\) and \(B\) are symmetric, positive semidefinite, and diagonally dominant. They share the smallest eigenvalue \(0 = \min \Sigma(L) = \min \Sigma(B)\), which is simple by the connectivity \cite[Theorem 2.7]{Grigoryan2018}, and its corresponding eigenvector \(\Ones\).
    \item \(A\) is symmetric positive definite, strictly diagonally dominant, and its smallest eigenvalue \( \lambda \) is associated again with the eigenvector \( \Ones\).
\end{itemize}

We can gain some additional information from the spectrum of \(A\). 
Let us revisit the extension \cite[Lemma 2.1]{Reams_1999} of Schur's theorem concerning the spectra of Hadamard products \cite[Theorem \S9 J.1]{Marshall2011}:
\begin{theorem}\label{th:Hadamard}
    Let \( C, D \in \mathcal{M}_n (\R)\) be symmetric. If \( C \succcurlyeq 0\), then 
	\[
        \max \Sigma(D) \diag(C) \succcurlyeq C \circ D \succcurlyeq \min\Sigma(D) \diag(C).
    \]
	Here \( \preccurlyeq \) is the Löwner order over the set of symmetric positive semidefinite matrices.
\end{theorem}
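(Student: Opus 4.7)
The plan is to deduce both Löwner inequalities as direct consequences of the classical Schur product theorem, namely that the Hadamard product of two positive semidefinite matrices is again positive semidefinite. Since $D$ is symmetric, spectral calculus gives the shifted matrices $\max \Sigma(D) \Idn - D$ and $D - \min \Sigma(D) \Idn$ as positive semidefinite. Combining each of these with the hypothesis $C \succcurlyeq 0$ via the Schur product theorem produces the target bounds almost immediately, once one checks that the Hadamard product of $C$ with the identity reproduces the diagonal part of $C$.

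In more detail, I would first establish the Schur product theorem itself for completeness: writing the spectral decomposition $C = \sum_{k=1}^n \lambda_k \bfv_k \bfv_k^\top$ with $\lambda_k \ge 0$ and using the identity $(\bfu \bfv^\top) \circ M = \diag(\bfu)\, M\, \diag(\bfv)$, we obtain
\[
C \circ E \;=\; \sum_{k=1}^n \lambda_k \diag(\bfv_k)\, E\, \diag(\bfv_k),
\]
which is a non-negative combination of congruences of $E$ and hence inherits any semidefiniteness property of $E$. Applying this with $E = \max\Sigma(D)\,\Idn - D$ and with $E = D - \min\Sigma(D)\,\Idn$ gives
\[
C \circ \bigl(\max\Sigma(D)\,\Idn - D\bigr) \succcurlyeq 0
\qquad\text{and}\qquad
C \circ \bigl(D - \min\Sigma(D)\,\Idn\bigr) \succcurlyeq 0.
\]
Exploiting bilinearity of the Hadamard product and the identity $C \circ \Idn = \diag(C)$ (interpreted, via the overloaded $\diag$ operator, as the diagonal matrix whose entries are those of the diagonal of $C$) rearranges these two inequalities into exactly the chain
\[
\max \Sigma(D)\, \diag(C) \;\succcurlyeq\; C \circ D \;\succcurlyeq\; \min\Sigma(D)\, \diag(C).
\]

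The main obstacle is really only notational: one has to be disciplined about the overloaded meaning of $\diag(\cdot)$ in the statement, ensuring the inequalities are read as matrix inequalities with $\diag(C)$ playing the role of the diagonal matrix extracted from $C$. No additional machinery beyond the spectral theorem for $D$, the PSD factorization of $C$, and bilinearity of $\circ$ is needed, so the proof is a short and self-contained argument built around the Schur product step.
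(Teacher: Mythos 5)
Your proof is correct. The paper does not give its own proof of this theorem --- it is quoted from the literature (Reams, Lemma 2.1, as an extension of Schur's theorem in Marshall--Olkin) --- and your argument, applying the Schur product theorem (proved via the congruence identity $(\bfv_k \bfv_k^\top)\circ E = \diag(\bfv_k)\,E\,\diag(\bfv_k)$) to the positive semidefinite shifts $\max\Sigma(D)\,\Idn - D$ and $D - \min\Sigma(D)\,\Idn$ together with $C \circ \Idn = \diag(C)$, is exactly the standard proof underlying the cited result, so it matches the intended justification.
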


\begin{lemma}\label{r:general_spectral_bounds}
	The spectral radius of \(L\) is contained in a ball centered at \(\max \etab\) and with radius \(\max \etab\); i.e., \( \max \etab \leq \rho(L) \leq 2 \max \etab\) due to symmetry. Moreover, if  \(\Gamma\) is the weight matrix of a non--bipartite graph\footnote{In particular, this occurs whenever we have a complete graph with \( \gamma_{i,j} > 0\) for all \( i\neq j\) and \(n> 2\).}, then the upper bound improves to a strict inequality.
\end{lemma}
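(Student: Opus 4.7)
The two-sided bound $\max\etab \leq \rho(L) \leq 2\max\etab$ splits naturally. For the upper estimate I would invoke Gershgorin's circle theorem on $L$: row $i$ has diagonal entry $\eta_i$ and off-diagonal absolute sum $\sum_{j\neq i}\gamma_{ij}=\eta_i$, so the $i$-th disc is $[0,2\eta_i]$. Since $L$ is symmetric positive semidefinite, $\Sigma(L)\subseteq[0,2\max\etab]$, which gives $\rho(L)\leq 2\max\etab$. For the lower estimate, pick $k$ with $\eta_k=\max\etab$ and evaluate the Rayleigh quotient at $\bfe_k$: $\Rayleigh(L,\bfe_k)=L_{k,k}=\eta_k$, so $\rho(L)=\max\Sigma(L)\geq \eta_k=\max\etab$.

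For the strict inequality under $\gamma_{i,j}>0$ for all $i\neq j$, I would argue by contradiction. Assume $L\bfx=2\max\etab\cdot\bfx$ with $\bfx\neq 0$ real (admissible by symmetry of $L$), choose $k$ so that $|x_k|=\|\bfx\|_\infty$, and, after a sign flip if necessary, normalize $x_k>0$. Rearranging the $k$-th coordinate of the eigenvalue equation yields
\[
(2\max\etab-\eta_k)\,x_k \;=\; -\sum_{j\neq k}\gamma_{kj}\,x_j.
\]
Bounding the right-hand side by the triangle inequality together with $|x_j|\leq x_k$, and using $2\max\etab-\eta_k\geq \eta_k=\sum_{j\neq k}\gamma_{kj}$ on the left, both inequalities must be simultaneously tight. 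This forces $\eta_k=\max\etab$ and $|x_j|=x_k$ whenever $\gamma_{kj}>0$; the positivity of both sides additionally pins down the sign $x_j=-x_k$ for every such $j$.

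Since the hypothesis gives $\gamma_{kj}>0$ for all $j\neq k$, the previous step identifies $\bfx=x_k(2\bfe_k-\Ones)$ up to scale. Substituting this back into $L\bfx=2\max\etab\cdot\bfx$ and matching the $\bfe_j$ coefficient for each $j\neq k$ forces $\gamma_{kj}=\max\etab$ for every $j\neq k$. Consequently $\eta_k=\sum_{j\neq k}\gamma_{kj}=(n-1)\max\etab$, which combined with $\eta_k=\max\etab$ and the connectivity assumption $\max\etab>0$ forces $n=2$. But the complete graph $K_n$ is non-bipartite only for $n\geq 3$, contradicting the standing hypothesis and delivering $\rho(L)<2\max\etab$.

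I expect the main obstacle to be the careful unpacking of the equality case in the triangle inequality, in particular deducing the sign $x_j=-x_k$ rather than merely $|x_j|=x_k$, and then verifying that the resulting rigid form of $\bfx$ is incompatible with the complete-graph hypothesis for $n\geq 3$. A shorter finisher would invoke the classical characterization that $\rho(L)=2\max\deg$ holds only for bipartite regular graphs, but the direct contradiction sketched above is self-contained and stays within the notation and hypotheses already in force.
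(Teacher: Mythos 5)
Your proof is correct, but it takes a genuinely different and more self-contained route than the paper. For the lower bound the paper invokes the Hadamard-product theorem (\cref{th:Hadamard} with \(C=\Idn\), \(D=L\)) to get \(\rho(L)\,\Idn \succcurlyeq \diag(\etab)\), whereas you simply evaluate the Rayleigh quotient at \(\bfe_k\) with \(\eta_k=\max\etab\); both are one-liners, yours arguably the more elementary. For the upper bound and, crucially, for the strict inequality in the non-bipartite case, the paper does not argue at all: it cites \cite[Theorem 2.7]{Grigoryan2018}, which gives \(\rho(L)\le 2\max\etab\) with equality tied to bipartiteness. You instead prove the upper bound by Gershgorin (each disc is \([0,2\eta_i]\)) and carry out the equality-case analysis by hand: at a coordinate of maximal modulus the eigenvalue equation for \(2\max\etab\) forces \(\eta_k=\max\etab\) and \(x_j=-x_k\) for every \(j\) with \(\gamma_{kj}>0\), hence (under full positivity) \(\bfx \propto 2\bfe_k-\Ones\), and back-substitution forces \(\gamma_{jk}=\max\etab\) for all \(j\neq k\), so \(\eta_k=(n-1)\max\etab=\max\etab\) and \(n=2\); since \(K_2\) is bipartite this contradicts the non-bipartiteness hypothesis. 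Your treatment has the added merit of exposing a point the lemma's ``i.e.'' glosses over: positivity of all off-diagonal weights alone does not exclude \(n=2\), where equality genuinely holds, so the strictness really does use \(n\ge 3\) (equivalently, non-bipartiteness of the complete support graph). What the paper's route buys is brevity via the cited spectral-graph result; what yours buys is a fully self-contained argument within the stated hypotheses, at the cost of the somewhat delicate equality-case bookkeeping you already flag.
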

\begin{proof}
    The lower bound follows from \cref{th:Hadamard} by selecting \( C = \mathsf{I}_n\) and \(D = L\), then \( \rho(L) \mathsf{I}_n \succcurlyeq \diag(\etab) \succcurlyeq \Zeros[n,n]\); i.e., \( \rho(L) \geq \max \etab\). Then the upper bound follows from \cite[Theorem 2.7]{Grigoryan2018}. In the same theorem, it shown that the upper bound is attained whenever the underlying graph is bipartite. 
\end{proof}

As a result, we have that \( \Sigma(A) \subset [\lambda, \lambda + 2\mu \max \etab) \), which yields the following inclusion for the condition number of \(A\) for non--bipartite based \(L\):
\begin{equation}
\label{eq:CondNumberUnPrecSystem}
    \kappa(A) = 1 + \frac{1}{\lambda} \rho(B) = 1 + \frac{\mu}{\lambda} \rho(L)
    \in 1 + \frac{\mu \max \etab}{\lambda} [1,2).
\end{equation}
We observe that \(\lambda\) has a critical rôle in the conditioning of \(A\); i.e., for values \( \lambda \ll \mu \max \etab\), the system \cref{eq:discretized-state} is ill--conditioned. Let us apply the sparse change of basis we derived in \cref{sec:change_of_basis}. To do this, we identify \( \bfv = \Ones\) as the basis eigenvector for the change of basis \(Q\) in \cref{eq:Hermitian_Q_Construction}, which yields
\begin{equation}
\label{eq:QLap}
    Q =
    \left[
	\begin{array}{@{} c ;{2pt/2pt} c c c c @{}}
		 \multirow{2}{*}{   $\Ones$     }
		 &  \Ones[n-1]^\top 
		\\
		\cdashline{2-2}[2pt/2pt]
		& -\Idn[n-1]
	\end{array}
	\right]
    \qquad\text{and}\qquad
    Q^{-1} = 
    n^{-1}\Ones[n,n] - \diag \big( 0, \Ones[n-1] \big).
\end{equation}
Their action on a vector \( \bfu \in \R^n\) with mean \( \overline{u} = \frac{1}{n} \sum\limits_{i=1}^n u_i\) can be easily computed through
\(
  Q\bfu = 
  \begin{bsmallmatrix}
      n\overline{u} &\,\,  u_1 \Ones[n-1]^\top  - \bfu_{2:n}^\top
  \end{bsmallmatrix}^\top\hspace{-.3em}
\)
and
\(
    Q^{-1}\bfu = 
    \begin{bsmallmatrix}
        \overline{u} &\,\, \overline{u} \Ones[n-1]^\top - \bfu_{2:n}^\top
    \end{bsmallmatrix}^\top \hspace{-.3em}
\).
Following \cref{eq:UnitaryGeneralForm}, the unitary representation of the change of basis encoded in \(Q\) is given by
\begin{align}
    U &=  
    \left[
	\begin{array}{@{} c ;{2pt/2pt} c}
		 \Ones & \mathrm{Triu}_{0}( \Ones[n,n-1] ) 
    \end{array}  \hspace{-0.35em}
    \right]
    \diag\Big( n^{-\nicefrac 1 2},  \big( \tfrac{1}{k(k-1)} \big)^{\circ \nicefrac 1 2}_{i \in \llb 2,n \rrb} \Big)
    -
    \diag\Big( 0, \big( \tfrac{k-1}{k} \big)^{\circ \nicefrac 1 2}_{i \in \llb 2,n \rrb} \Big)
    \notag
    \\
    &=
	\left[\arraycolsep=1.6pt
	\begin{array}{@{} c ;{2pt/2pt} r r r r r r r}
		 \multirow{6}{*}{   $ \Ones$     }
		 &  \phantom{-} 1 & 1 & 1 & \cdots & 
		 \multirow{5}{*}{   $  \Ones[n-1] $}
		\\
		& -1 & 1 & 1
		\\
		\cdashline{2-2}[2pt/2pt]
		& 0 & -2 & 1
		\\
		\cdashline{3-3}[2pt/2pt]
		& \vdots & 0 & -3
		\\
		\cdashline{4-4}[2pt/2pt]
		& & & 0 & \ddots
		\\
		& 0 & \cdots &  & 0 &-(n-1)
	\end{array}
	\right]
	\diag\Big( n^{-\nicefrac 1 2},  \big( \sqrt{\nicefrac{1}{k(k-1)}} \big)_{i \in \llb 2,n \rrb} \Big).
    \label{eq:ULap}
\end{align}
Again, it is possible to obtain simplified expressions for the action of \(U\) and \(U^\top\), which are real matrices, as at the end of \cref{sec:unitary_change}.

We know that as a result of the symmetry of \(Q\), the conjugate forms \(\UnitSim[Q]{A}\) and \(\UnitSim{A}\)\footnote{Recall the notation from \cref{sec:change_of_basis}: \(\UnitSim[X]{Y} \coloneqq X^{-1} Y X\) for any two square matrices \(Y\) and \(X\).} have a block diagonal structure as in \cref{eq:GeneralChangeOfBasisOfA} with \( \mathbf{b}_1 = \Zeros[n-1]^\top\). After some algebraic manipulations, it can be derived that the sub--blocks \((\UnitSim[Q]{A})_{2:n,2:n}\) and \((\UnitSim[U]{A})_{2:n,2:n}\) can have non--negative off--diagonal entries. In the case of \(\UnitSim[Q]{A}\), we can readily see this since
\[
	Q^{-1} L = 
	\left[
	\begin{array}{@{} c c}
		\mathbf{0}_{n}^\top
		\\[0.1em]
		\cdashline{1-2}[2pt/2pt]
		\\[-0.9em]
		-L_{2:n, 1:n}
	\end{array}
	\right]
	\qquad\text{and}\qquad
	\UnitSim[Q]{L} = 
	\left[
	\begin{array}{@{} c ;{2pt/2pt} c c c c c c c}
		 0 & \mathbf{0}_{n-1}^\top
		\\[0.1em]
		\cdashline{1-2}[2pt/2pt]
		\\[-0.9em]
		\mathbf{0}_{n-1} & L_{2:n, 2:n} + \Gamma_{2:n,1} \Ones[n-1]^\top
	\end{array}
	\right]
	.
\]
As a result, we have that
\begin{equation}
\label{eq:ChangeOfBasisOfA}
	\UnitSim[Q]{A} =
	\left[
	\begin{array}{@{} c ;{2pt/2pt} c c c c c c c}
		 \lambda & \mathbf{0}_{n-1}^\top
		\\[0.1em]
		\cdashline{1-2}[2pt/2pt]
		\\[-0.9em]
		\mathbf{0}_{n-1} &  \mu L_{2:n, 2:n} + \mu \Gamma_{2:n,1} \Ones[n-1]^\top + \lambda \Idn[n-1]
	\end{array}
	\right].
\end{equation}
By our analysis in \cref{sec:change_of_basis}, we know that \( \Sigma(A) = \Sigma(\UnitSim[Q]{A}) \), moreover
\(
    \Sigma(\UnitSim[Q]{A}) \setminus \{\lambda\} = \mu \Sigma\big(  L_{2:n, 2:n} + \Gamma_{2:n,1} \Ones[n-1]^\top  \big) + \lambda
\).

At this point, we introduce the \emph{leverage operator} \( \mathsf{P}_{\ell(\lambda)} \) defined as 
\(
    \mathsf{P}_{ \ell(\lambda) } \coloneqq \diag\big( \ell_1 (\lambda), \ell_2 (\lambda) \Ones[n-1] \big).
\)
When we premultiply \( \UnitSim{A}\) or \(\UnitSim[Q]{A}\) by such an operator, we modify the spectrum in the following way:
\[
	\Sigma\big( \mathsf{P}_{ \ell(\lambda) } \UnitSim{A} \big) =
	\Sigma\big( \mathsf{P}_{ \ell(\lambda) } \UnitSim[Q]{A} \big)
	=
	\ell_2(\lambda) \big[ \Sigma(B) + \lambda \big] \setminus \{\lambda\}
	\, \cup \,
	\{ \ell_1(\lambda) \lambda \}.
\]
Notice here that \( \mathsf{P}_{ \ell(\lambda) } \UnitSim{A}\) is a symmetric matrix, as the leverage \(\mathsf{P}_{ \ell(\lambda) }\) acts on each block of \( \UnitSim{A}\) as a constant.

To understand the name and motivation of this operator, observe that if we select the functions \( \ell_1(\lambda) = \lambda^{-1}  \big(  \delta \rho(B) + (1-\delta) a(B) +\lambda \big) \) with \(\delta \in [0,1]\) and also \( \ell_2(\lambda) \equiv 1\), then the spectrum of \( \mathsf{P}_{ \ell(\lambda) } \UnitSim{A} \) will be contained in the convex hull of \( \Sigma(A) \setminus \{\lambda\} \). Hence we have \emph{leveraged} the smallest eigenvalue \(\lambda\). 

Let us consider how the operator affects the matrix in the original basis. To see this, consider the conjugation by \(Q\) and the same leverage action \( \mathsf{P}_{\ell(\lambda)} \) that reallocates the smallest eigenvalue \(\lambda\) inside the spectrum of the sub--block \( (\UnitSim[Q]{A})_{2:n,2:n} \). Then we have that
\begin{align}
    Q \mathsf{P}_{\ell(\lambda)} \UnitSim[Q]{A} Q^{-1} 
    &= Q \lambda \mathsf{P}_{\ell(\lambda)} Q^{-1} + B
    \notag
    \\
	&= \frac{1}{n} ( \lambda \ell_1(\lambda) - \lambda ) \Ones[n,n] + \lambda \Idn
	+ B = \frac{1}{n} ( \lambda \ell_1(\lambda) - \lambda ) \Ones[n,n] + A.
    \label{eq:LeverageBack}
\end{align}
As the change of basis does not modify the spectrum, the spectrum of \eqref{eq:LeverageBack} lies in the interval \( \big[ a(A), \rho(A) \big]\). Moreover, notice that now we are dealing with a positive definite matrix that might not necessarily be an \(M\)--matrix, yet it is still strictly diagonally dominant. 

The leverage operator allows us to retrieve information about the algebraic connectivity of \(L\). 
\begin{lemma}\label{lemma:UpperConnectivity}
    For any graph Laplacian \(L\), it holds that
    \(
        a(L) \leq \frac{n}{n-1} \min \etab.
    \)
\end{lemma}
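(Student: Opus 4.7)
The plan is to exploit the Courant--Fischer min--max characterization of the second smallest eigenvalue. Since $L$ is symmetric and positive semidefinite with simple smallest eigenvalue $0$ associated with the eigenvector $\Ones$ (as noted in the bullet list preceding the statement), the algebraic connectivity admits the variational representation
\[
    a(L) \;=\; \min_{\substack{\bfx \in \R^n,\, \bfx \neq \Zeros \\ \bfx \perp \Ones}} \frac{\bfx^\top L \bfx}{\bfx^\top \bfx}.
\]
Any admissible test vector therefore furnishes an upper bound, so the task reduces to exhibiting a single vector $\bfx \perp \Ones$ whose Rayleigh quotient equals $\tfrac{n}{n-1} \min \etab$.

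A natural candidate is the shifted canonical vector at the index realizing the minimum degree. Concretely, let $i^\star \in \argmin_{i \in \llb 1,n\rrb} \eta_i$ and define $\bfx \coloneqq \bfe_{i^\star} - n^{-1} \Ones$. The orthogonality $\bfx \perp \Ones$ is immediate, and an elementary computation yields $\|\bfx\|_2^2 = \tfrac{n-1}{n}$. For the numerator, I would use the key identity $L\Ones = \Zeros$ (together with symmetry, $\Ones^\top L = \Zeros^\top$) to kill the cross terms and the $\Ones$--$\Ones$ contribution in the expansion of $\bfx^\top L \bfx$, leaving only $\bfe_{i^\star}^\top L \bfe_{i^\star} = L_{i^\star,i^\star} = \eta_{i^\star}$, the last equality following from the assumption that the diagonal of $\Gamma$ vanishes so that $L_{i,i} = \eta_i$.

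Combining these computations gives
\[
    a(L) \;\leq\; \frac{\bfx^\top L \bfx}{\bfx^\top \bfx} \;=\; \frac{\eta_{i^\star}}{(n-1)/n} \;=\; \frac{n}{n-1}\, \min \etab,
\]
which is exactly the claimed inequality. There is no real obstacle here beyond identifying the right test vector: once one notices that the shift $-n^{-1}\Ones$ both enforces orthogonality to $\Ones$ and is annihilated by $L$, the bound falls out of a two--line calculation. The result is essentially a classical consequence of the Courant--Fischer principle applied to Laplacians and is sharp in the sense that it only saturates for highly symmetric weight configurations (e.g.\ regular complete weighted graphs), which could be recorded as a remark if desired.
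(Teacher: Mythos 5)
Your proof is correct, but it follows a genuinely different route from the paper. You invoke the Courant--Fischer characterization $a(L) = \min_{\bfx \perp \Ones,\,\bfx\neq \Zeros} \bfx^\top L\bfx/\bfx^\top\bfx$ and test it with $\bfx = \bfe_{i^\star} - n^{-1}\Ones$ at a minimum-degree index, using $L\Ones = \Zeros$ to reduce the numerator to $L_{i^\star,i^\star} = \eta_{i^\star}$ and computing $\|\bfx\|_2^2 = (n-1)/n$; this is essentially Fiedler's classical bound of the algebraic connectivity by the minimum degree, and all steps check out (note that simplicity of the zero eigenvalue is not even needed for the variational formula---it suffices that $\Ones$ is an eigenvector for the smallest eigenvalue, so your appeal to connectivity is harmless but superfluous). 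The paper instead proves the bound as a by-product of its own machinery: it applies the leverage operator $\mathsf{P}_{\ell(\lambda)}$ with $\delta = 0$ to the conjugated matrix, transforms back to the original basis via \cref{eq:LeverageBack}, and then compares the resulting matrix against its diagonal using the Hadamard-product/L\"owner-order result in \cref{th:Hadamard} (the same device as in \cref{r:general_spectral_bounds}), extracting the inequality $a(B) + \lambda \leq \tfrac{1}{n}a(B) + \lambda + \mu\min\etab$. Your argument is shorter, more elementary, and self-contained; the paper's version buys internal coherence, illustrating how the leverage/change-of-basis framework it has just built can recover spectral information, at the cost of routing a two-line classical fact through \cref{th:Hadamard}. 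Either proof establishes the lemma, and your closing remark about sharpness matches the paper's complete-graph example following the lemma.
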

\begin{proof}
    Notice that we can apply the same procedure as in \cref{r:general_spectral_bounds} by instead letting \( C = \Idn\) and \( D = Q \mathsf{P}_{\ell(\lambda)} \UnitSim[Q]{A} Q^{-1}\). The diagonal of \(D\) is just \( \frac{1}{n} \big( \lambda \ell_1(\lambda) - \lambda \big) \Ones + \lambda \Ones + \mu\etab\), and its minimum eigenvalue is \( \min\big\{ a(B) + \lambda, \lambda \ell_1(\lambda) \big\} \). The latter can be seen by computing a Rayleigh quotient with the corresponding vector expanded as a linear combination of eigenvectors of $A$, which include $\Ones[n]$ and $n-1$ vectors orthogonal to $\Ones[n]$ By the choice \( \ell_1(\lambda) = \lambda^{-1}  \big(  \delta \rho(B) + (1-\delta) a(B) +\lambda \big) \) with \( \delta = 0\), we obtain that \( \min \Sigma(D) = a(B) + \lambda\), and then by \cref{th:Hadamard}:
    \[
    	\big( a(B) + \lambda \big) \Idn \preccurlyeq \diag\Big( \frac{1}{n} \big( \lambda \ell_1(\lambda) - \lambda \big)\Ones + \lambda \Idn + \mu\etab \Big)
    	=
    	\diag\Big( \frac{1}{n} a(B) \Ones + \lambda \Idn + \mu\etab \Big)
    	,
    \]
    which means that 
    \( a(B) + \lambda \leq \frac{1}{n} a(B) + \lambda + \mu \min \etab\) 
    and the bound follows.
\end{proof}
The inequality in \cref{lemma:UpperConnectivity} is as sharp as possible without employing specific properties from the underlying graph; for graph--based bounds, see \cite{Abreu2007,Brouwer2012}. A simple example arises from the unweighted complete graph \(K_{n}\): its unscaled graph Laplacian is given by \( L = n \Idn - \Ones[n,n]\), and its spectrum is just \(\{0,n\}\) where \(n\) has algebraic multiplicity \(n-1\). As a result, \( a(B) = n\) but \( \min \diag(B) = n-1\), so we have that \( a(B) \geq \min \diag(B)\) and the inequality \( a(B) \leq \frac{n}{n-1} \min \diag(B)\) is exactly attained.

Finally, we can arrive at a new inequality for bounding the algebraic connectivity from below by noticing that the matrix \( E = L_{2:n, 2:n} + \Gamma_{2:n,1} \Ones[n-1]^\top\) is a rank--one perturbation of a definite \(M\)--matrix. The perturbation is positive semidefinite in the sense that \( \Sigma(\Gamma_{2:n,1} \Ones[n-1]^\top) = \Big\{0, \sum\limits_{i=2}^n\Gamma_{i,1} \Big\}\). Moreover, we obtain the following result:

\begin{lemma}\label{lem:lower_bound_algebraic_connectivity}
    It holds that \( \Sigma(E) \geq \min \Gamma_{2:n,1}\).
\end{lemma}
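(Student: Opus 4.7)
The plan is to first reduce the lemma to a lower bound on the algebraic connectivity of $L$, and then to prove that bound via a Rayleigh quotient restricted to the star of edges incident to vertex~$1$. Specializing the block-diagonal decomposition \eqref{eq:ChangeOfBasisOfA} to the graph Laplacian $L$ itself (i.e., taking $\mu = 1$, $\lambda = 0$) yields a block-diagonal matrix $\UnitSim[Q]{L}$ with leading entry~$0$ and lower-right block equal to $E$. Since $L$ is symmetric and $Q$ is invertible, similarity gives $\Sigma(\UnitSim[Q]{L}) = \Sigma(L) \subset \R$, and the block-diagonal structure forces $\Sigma(E) = \Sigma(L) \setminus \{0\}$ as a multiset. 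The connectivity of the graph underlying $\Gamma$ makes $0$ a simple eigenvalue of $L$, so $\min \Sigma(E) = a(L)$, and it suffices to prove $a(L) \geq c$, where $c \coloneqq \min \Gamma_{2:n,1}$.

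To obtain this lower bound I would use the Rayleigh--Courant characterization of the algebraic connectivity of a symmetric graph Laplacian and then discard every pairwise contribution except those incident to vertex~$1$, using $\Gamma_{j,1} \geq c$ for $j = 2,\ldots,n$:
\[
a(L) \;=\; \min_{\Ones^\top \bfx = 0,\,\|\bfx\|_2 = 1} \sum_{i<j} \Gamma_{ij}(x_i - x_j)^2 \;\geq\; c \,\min_{\Ones^\top \bfx = 0,\,\|\bfx\|_2 = 1} \sum_{j=2}^n (x_j - x_1)^2.
\]

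The key (and only non-routine) step is then to show the remaining sum is bounded below by $1$ on the admissible set. Expanding the squares and substituting the mean-zero constraint in the form $\sum_{j \geq 2} x_j = -x_1$ yields
\[
\sum_{j=2}^n (x_j - x_1)^2 \;=\; \bigl(\|\bfx\|_2^2 - x_1^2\bigr) + 2 x_1^2 + (n-1) x_1^2 \;=\; \|\bfx\|_2^2 + n x_1^2 \;\geq\; 1,
\]
so $a(L) \geq c$ and hence $\Sigma(E) \geq c$, as claimed. The main subtlety is conceptual rather than technical: although $E$ is non-symmetric, the change-of-basis identity from the preceding subsection reduces the problem to a standard symmetric Rayleigh quotient, bypassing the delicate perturbation-theoretic or field-of-values arguments that the rank-one non-symmetric perturbation $\Gamma_{2:n,1}\Ones[n-1]^\top$ of the grounded Laplacian $L_{2:n,2:n}$ would otherwise require.
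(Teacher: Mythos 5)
Your proposal is correct, but it takes a genuinely different route from the paper's own proof. The paper works directly on \(E\): using \(L_{2:n,2:n}\Ones[n-1] = \Gamma_{2:n,1}\) it factors \(E = L_{2:n,2:n}\big(\Idn[n-1] + \Ones[n-1,n-1]\big)\) as a product of two positive definite matrices, invokes the eigenvalue comparison \(\lambda_k(L_{2:n,2:n}) \leq \lambda_k(E) \leq n\,\lambda_k(L_{2:n,2:n})\) for such products \cite[\S 20 A.1.a]{Marshall2011}, and then bounds the smallest eigenvalue of the grounded Laplacian \(L_{2:n,2:n}\) below by \(\min \Gamma_{2:n,1}\) via Gershgorin, since its off--diagonal row sums are \(\etab_{2:n} - \Gamma_{2:n,1}\). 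You instead use the block--diagonalization \(\UnitSim[Q]{L}\) displayed just before \cref{eq:ChangeOfBasisOfA} to identify \(\Sigma(E)\) with \(\Sigma(L)\setminus\{0\}\) (zero being simple by connectedness), reducing the lemma to \(a(L) \geq \min \Gamma_{2:n,1}\), which you then prove by Courant--Fischer: drop all edge terms not incident to vertex \(1\) and verify the identity \(\sum_{j\geq 2}(x_j - x_1)^2 = \|\bfx\|_2^2 + n x_1^2 \geq 1\) on the constraint set \(\Ones^\top \bfx = 0\), \(\|\bfx\|_2 = 1\); both steps check out. Your argument is more elementary and self--contained (no product--of--SPD--matrices result, no Gershgorin), it yields the algebraic--connectivity bound \(a(L) \geq \min\Gamma_{2:n,1}\) --- and hence \cref{eq:AlgCon_LB_on_A} --- directly, and since Cauchy interlacing gives \(\min\Sigma(L_{2:n,2:n}) \leq a(L)\), the quantity you bound from below is at least as large as the paper's intermediate one; what the paper's factorization buys instead is two--sided control of the whole spectrum of \(E\) relative to the grounded Laplacian, not just its minimum. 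One presentational nit: the multiset equality \(\Sigma(E) = \Sigma(L)\setminus\{0\}\) already relies on simplicity of the zero eigenvalue, so the connectivity assumption should be invoked one sentence earlier than you state it --- but since that is the paper's standing assumption on \(\Gamma\), nothing substantive is missing.
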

\begin{proof}
    Let us denote \( \bfu \coloneqq \Gamma_{2:n,1}\) and \(S \coloneqq L_{2:n,2:n}\). Notice that \( S \Ones[n-1] = \bfu\). Thus we can factor \(E\) as the product of two symmetric matrices \( E = S( \Idn[n-1] + \Ones[n-1,n-1] ) \eqqcolon S F\). Observe that the matrix \(F\) is positive definite with \( \Sigma(F) = \{1,n\}\) and diagonalizable by nothing else than the change of basis \(U\) in \cref{eq:ULap}. As a result, \(E\) is the product of two positive definite matrices, hence we have by \cite[\S 20 A.1.a]{Marshall2011} that, for any \(k \in \llb 1,n-1\rrb\), it holds that \(\lambda_k(S) \leq \lambda_k(E) \leq n \lambda_k(S)\), for which we can assert that the eigenvalues of \(S\) satisfy \(\frac{1}{n} \lambda_k(E) \leq \lambda_k(S) \leq \lambda_k(E)\). Here we have used the notation \(\lambda_k(A)\) to refer to the ascending \(k\)--th eigenvalue (accounting for multiplicity) of a positive definite matrix \(A \in \mathcal{M}_n (\F) \).
    
    Now, it is clear that the sum of the off--diagonal entries of \(S\) by rows is nothing else than \( \etab_{2:n} - \bfu = \etab_{2:n} - \Gamma_{2:n,1}\). By Gershgorin's circle theorem \cite{Varah1975,Golub2013}, we have that \( \min \Sigma(S) \geq \min \Gamma_{2:n,1}\).
\end{proof}


The lower bound obtained in \cref{lem:lower_bound_algebraic_connectivity} allows us to obtain the following bound on the algebraic connectivity of the original system via \cref{eq:ChangeOfBasisOfA}:
\begin{equation}
    \label{eq:AlgCon_LB_on_A}
    a(A) \geq \mu \min \Gamma_{1:n,1} + \lambda.
\end{equation}
If \(L\) is the graph Laplacian of a complete graph, the bound \cref{eq:AlgCon_LB_on_A} is particularly useful for estimating when the system \cref{eq:discretized-state} is ill--conditioned, and estimating the condition number of the sub--block \( (\UnitSim[Q]{A})_{2:n,2:n}\). Notice that the bound gives a sharper criteria for ill--conditioning and at the expense of computing \( L\bfe_1\).

\subsection{Applications for preconditioning graph Laplacians}\label{sec:Prec_GLs}

In this section we propose two families of preconditioners for solving the system \cref{eq:discretized-state}. The first family consists of two sparse diagonal preconditioners. Their spectra are analyzed and criteria for their applicability are discussed. These preconditioners will be extended to a second family of dense preconditioners. Examples will showcase the effects of each family on preconditioning the system \cref{eq:discretized-state}. We note that, alongside research on solvers for graph Laplacians (see e.g., \cite{Spielman2011,Napov2016,Gao2023,D’Elia2021}), recent work on interior point methods for support vector machines, involving the second author \cite{WaPeSt2023}, addressed preconditioning matrices of the form \( -\Gamma \) (including analogous diagonal entries), summed with diagonal matrices from logarithmic barrier terms with entries that may be very large or small. However, under our current framework in this work, the replacement of the diagonal with \(\diag(\etab)\), which may lead to many of the smallest eigenvalues being moved further away from the origin, and their regularization by a (possibly very small) constant $\lambda$, necessitates a bespoke strategy for the matrix \(A\).

\subsubsection{Diagonal preconditioners}\label{sec:Diagonal_Preconditioners}
As \(A\) is a positive definite operator, the Jacobi preconditioner \( \Prec[a] = \diag(A) \) is an out--of--the--box suitable choice. By our choice of \(\Gamma\), we have that
\[
    \Prec[a] = \lambda \Idn + \mu \diag(\etab) - \mu \diag(\Gamma) = \lambda \Idn + \mu \diag(\etab).
\]
We have the following result for the numerical range of the preconditioned system \( \Prec[a]^{-1} A\):
\begin{lemma}\label{lem:Rayleigh_Diag_Jac}
The eigenvalues of the preconditioned system satisfy
\begin{equation}
    \label{eq:Bounds_on_prec_jacobi_diagonal}
    \frac{\lambda}{\lambda + \mu \max \etab} \leq
    \Sigma( \Prec[a]^{-1} A )
    \leq 
    \min \left\{ 2, \frac{n}{n-1} \frac{\rho(L)}{a(L)} \right\} .
\end{equation}
\end{lemma}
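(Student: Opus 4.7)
The plan is to convert each scalar bound on $\Sigma(\Prec[a]^{-1}A)$ into a matching L\"owner-order sandwich between $A$ and $\Prec[a]$. This works cleanly because $\Prec[a]$ is symmetric positive definite and both matrices share the $\lambda\Idn$ summand: any inequality of the form $L \preccurlyeq c\diag(\etab)$ with $c \geq 1$ lifts to $A \preccurlyeq c\Prec[a]$, since $\lambda\Idn \preccurlyeq c\lambda\Idn$ as well, and the same passage handles the lower bound in reverse.

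For the lower bound, I would combine $L \succcurlyeq 0$ (so $A \succcurlyeq \lambda\Idn$) with the trivial $\diag(\etab) \preccurlyeq \max\etab \cdot \Idn$ (so $\Prec[a] \preccurlyeq (\lambda + \mu\max\etab)\Idn$) to conclude $A \succcurlyeq \tfrac{\lambda}{\lambda + \mu\max\etab}\Prec[a]$, which is the left-hand side of \cref{eq:Bounds_on_prec_jacobi_diagonal}.

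For the constant-$2$ upper bound, the central observation is that the \emph{signless} graph Laplacian $\diag(\etab) + \Gamma$ is positive semidefinite: it admits a decomposition as a nonnegative combination of rank-one PSD summands $\gamma_{ij}(\bfe_i + \bfe_j)(\bfe_i + \bfe_j)^\top$ indexed over $i < j$. This yields $L = 2\diag(\etab) - (\diag(\etab) + \Gamma) \preccurlyeq 2\diag(\etab)$, and therefore $A \preccurlyeq \lambda\Idn + 2\mu\diag(\etab) \preccurlyeq 2\Prec[a]$.

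For the alternative upper bound $\tfrac{n}{n-1}\tfrac{\rho(L)}{a(L)}$, I would invoke \cref{lemma:UpperConnectivity}, which yields $\min\etab \geq \tfrac{n-1}{n}a(L)$ and hence $\diag(\etab) \succcurlyeq \tfrac{n-1}{n}a(L)\Idn$, and combine it with the trivial spectral bound $L \preccurlyeq \rho(L)\Idn$ to obtain $L \preccurlyeq \tfrac{n\rho(L)}{(n-1)a(L)}\diag(\etab)$. The one point worth checking is that the constant $c \coloneqq \tfrac{n\rho(L)}{(n-1)a(L)}$ is at least $1$ (immediate from $a(L) \leq \rho(L)$), which licenses the packaging $A \preccurlyeq c\Prec[a]$. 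Taking the smaller of the two upper estimates closes the argument; the only real conceptual hurdle is recognising the signless-Laplacian identity for the first upper bound, after which everything reduces to L\"owner-order bookkeeping.
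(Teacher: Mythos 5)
Your proposal is correct, and for two of the three bounds it uses the same ingredients as the paper, merely phrased through the L\"owner order instead of generalized Rayleigh quotients: the lower bound comes from $L \succcurlyeq 0$ together with $\diag(\etab) \preccurlyeq (\max\etab)\Idn$, exactly as in the paper's estimate $\Rayleigh(A,\Prec[a],\bfx) \geq \lambda/(\lambda+\mu\max\etab)$, and the bound $\tfrac{n}{n-1}\tfrac{\rho(L)}{a(L)}$ again rests on $L \preccurlyeq \rho(L)\Idn$ plus \cref{lemma:UpperConnectivity} (via $\min\etab \geq \tfrac{n-1}{n}a(L)$), which is precisely how the paper closes its Rayleigh-quotient chain; your extra check that $c \geq 1$ before packaging $\lambda\Idn + c\mu\diag(\etab) \preccurlyeq c\Prec[a]$ is the right (and necessary) bookkeeping step. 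Where you genuinely diverge is the constant-$2$ bound: the paper obtains $\Sigma(\Prec[a]^{-1}A) \subset [0,2]$ by observing that $\Prec[a]^{-1}A = \Idn - (\lambda\Idn + \mu\diag(\etab))^{-1}\mu\Gamma$ is a perturbation of the identity by a (sub)stochastic matrix and citing \cite[Theorem 2.7]{Grigoryan2018}, whereas you prove the underlying fact from scratch via the signless Laplacian: $\diag(\etab)+\Gamma = \sum_{i<j}\gamma_{i,j}(\bfe_i+\bfe_j)(\bfe_i+\bfe_j)^\top \succcurlyeq 0$ (valid since $\Gamma$ has zero diagonal and nonnegative entries), hence $L \preccurlyeq 2\diag(\etab)$ and $A \preccurlyeq 2\Prec[a]$. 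Your route is self-contained and elementary, at the cost of rediscovering a standard spectral-graph-theory fact that the paper simply imports; the conclusions are identical.
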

\begin{proof}
    Let us start by noticing that \( \Prec[a]^{-1} A\) can be written as the difference between an identity matrix and a stochastic matrix. Hence by \cite[Theorem 2.7]{Grigoryan2018}, we have that \( \Sigma(\Prec[a]^{-1} A) \subset [0,2]\).

    Now let us analyse the generalized Rayleigh quotient of \( \Prec[a]^{-1} A\). Here, let \( \bfx \neq \Zeros[n]\) and
    \begin{equation}
        \label{eq:Rayleigh_Pa}
        \Rayleigh(A, \Prec[a], \bfx)
        = 
        \frac{ \langle \bfx, A \bfx \rangle }{ \langle \bfx, \Prec[a] \bfx\rangle }
        =
        \frac{ \lambda \langle \bfx, \bfx\rangle + \mu \langle \bfx, \etab\circ \bfx \rangle - \mu \langle \bfx, \Gamma \bfx \rangle }{\lambda \langle \bfx, \bfx\rangle + \mu \langle \bfx, \etab\circ \bfx \rangle }.
    \end{equation}
    Now consider the inequality \( \frac{a+b}{a+c} \leq \max\{ 1, \nicefrac{b}{c} \} \), which holds for all \(a,b \geq 0\) and \( c>0\). Hence
    \[
        \Rayleigh(A, \Prec[a], \bfx)
        \leq
        \max \left\{
            \frac{ \langle \bfx, \etab\circ \bfx \rangle - \langle \bfx, \Gamma \bfx \rangle }{\langle \bfx, \etab\circ \bfx \rangle }
        ,1 \right\}.
    \]
    Notice that \( \langle \bfx, \etab\circ \bfx - \Gamma \bfx \rangle \leq \rho(L) \|\bfx\|^2\) and \( \langle \bfx, \etab \circ \bfx \rangle \geq \min \etab \|\bfx\|^2 \). Thus, we can further bound the above by
    \[
        \Rayleigh(A, \Prec[a], \bfx) \leq 
        \max \left\{
            \frac{\rho(L)}{\min \etab} ,  1
        \right\}.
    \]
    From \cref{lemma:UpperConnectivity}, we may conclude that
    \[
        \Rayleigh(A, \Prec[a], \bfx) \leq 
        \min \left\{
        2,
            \frac{n}{n-1} \frac{\rho(L)}{a(L)} 
        \right\}.
    \]

    For the lower bound, by the positive definiteness of $L$, we have that
    \[
        \Rayleigh(A, \Prec[a], \bfx)
        \geq
        \frac{ \lambda \langle \bfx, \bfx\rangle }{\lambda \langle \bfx, \bfx\rangle + \mu \langle \bfx, \etab\circ \bfx \rangle } 
        \geq \min_{i\in \llb 1,n\rrb} \frac{\lambda}{\lambda + \mu \eta_i}
        = \frac{\lambda}{\lambda + \mu \max \etab}
        .
    \]
\end{proof}

\begin{remark}
    From \cref{eq:Rayleigh_Pa} we also see, by the positivity of \(\etab\), that
    \[
        \Rayleigh(A, \Prec[a], \bfx) \leq 1 + \frac{\mu}{\lambda} \frac{\langle \bfx, L \bfx \rangle}{\langle \bfx, \bfx \rangle} 
        \leq 1 + \frac{2\mu}{\lambda} \max \etab.
    \]
    This is nothing else than the upper bound from the condition number of \(A\) as in \cref{eq:CondNumberUnPrecSystem}. Also, notice that if \( \lambda \gg 2\mu \max\etab\), then the spectral radius of the preconditioned system is very close to \(1\).
    On the other hand, the lower bound on \cref{eq:Bounds_on_prec_jacobi_diagonal} tells us that the smallest eigenvalue of \( \Prec[a]^{-1} A\) can be even smaller than the smallest eigenvalue of \(A\). In particular, this becomes critical in the case \(\lambda \ll \mu \max\etab\), as often \(\etab \geq \mathbf{1}_n\), for which we can suspect that the smallest eigenvalue can be closer to zero with preconditioning, hence damping the performance of Krylov subspace solvers.
\end{remark}

Now let us consider the preconditioner proposed in \cite{D’Elia2021} for the nonlocal means kernel:
\[
    \Prec[b] = \diag \bigg( \Big[ \sum_{j} a_{i,j}^2 \Big]^{1/2} \bigg)_{i \in \llb 1,n\rrb}. 
\]
Taking into account the definitions of \(A\), \(\Gamma\), and \(\etab\), we have that \( a_{i,i}^2 = (\mu\etab + \lambda)_i^2\), while \( a_{i,j}^2 = \mu^2 \gamma_{i,j}^2\) for \( i\neq j\) (here, we employ the convention of overwriting $\gamma_{i,i} = 0$).
Summing up either by rows or columns, we obtain the following:
\begin{equation}
\label{eq:Prec_rel_a_b}
       ( \Prec[b])_{i,i}^2 = \sum_{j=1}^n a_{i,j}^2 =  \mu^2 \sum_{j=1}^n \gamma_{i,j}^2 + (\mu \eta_i + \lambda)^2
        = \mu^2 [ (\Gamma \circ \Gamma) \mathbf{1}_n ]_i + ( \Prec[a] )_{i,i}^2,
\end{equation}
or alternatively
\(
	\Prec[b] = \big[ \mu^2 \diag( \Gamma^{\circ 2} \, \mathbf{1}_n ) + ( \Prec[a] )^2 \big]^{\nicefrac 1 2} \hspace{-0.3em}.
\)
\begin{lemma}
    \label{lem:Equivalence_Diag_Precs}
    The operators \( \Prec[a]\) and \( \Prec[b]\) are spectrally equivalent; specifically, \( \Prec[a] \preccurlyeq \Prec[b] \preccurlyeq \sqrt{2} \Prec[a]\).
\end{lemma}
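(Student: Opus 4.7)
Both $\Prec[a]$ and $\Prec[b]$ are diagonal matrices with strictly positive entries (since $\lambda > 0$ and $\eta_i \geq 0$). For two such diagonal matrices $D_1, D_2 \succ 0$, the Löwner inequality $D_1 \preccurlyeq D_2$ is equivalent to the entrywise comparison of diagonals $(D_1)_{i,i} \leq (D_2)_{i,i}$, and since the entries are positive this is in turn equivalent to $(D_1)_{i,i}^2 \leq (D_2)_{i,i}^2$. My plan is thus to work entirely at the level of the squared diagonal entries, which is exactly the form in which \cref{eq:Prec_rel_a_b} expresses the relationship between $\Prec[a]$ and $\Prec[b]$.

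For the lower bound $\Prec[a] \preccurlyeq \Prec[b]$, I would simply note that \cref{eq:Prec_rel_a_b} gives
\[
    (\Prec[b])_{i,i}^2 - (\Prec[a])_{i,i}^2 = \mu^2 \big[(\Gamma \circ \Gamma)\mathbf{1}_n\big]_i = \mu^2 \sum_{j=1}^n \gamma_{i,j}^2 \geq 0,
\]
since $\gamma_{i,j} \in [0,1]$ by assumption on $\Gamma$. This yields $(\Prec[a])_{i,i}^2 \leq (\Prec[b])_{i,i}^2$ for every $i$, and the Löwner inequality follows.

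For the upper bound $\Prec[b] \preccurlyeq \sqrt{2}\,\Prec[a]$, which after squaring amounts to showing $(\Prec[b])_{i,i}^2 \leq 2(\Prec[a])_{i,i}^2$, the key step is to bound the extra nonnegative term $\mu^2 \sum_j \gamma_{i,j}^2$ by $(\Prec[a])_{i,i}^2$. Using $\lambda, \eta_i \geq 0$ I have $(\Prec[a])_{i,i}^2 = (\mu\eta_i + \lambda)^2 \geq \mu^2 \eta_i^2$, so it suffices to show
\[
    \sum_{j=1}^n \gamma_{i,j}^2 \leq \eta_i^2 = \bigg(\sum_{j=1}^n \gamma_{i,j}\bigg)^2.
\]
This is immediate from the nonnegativity of each $\gamma_{i,j}$ by expanding the square on the right: all cross terms $2\gamma_{i,j}\gamma_{i,k}$ with $j \neq k$ are nonnegative, so the diagonal sum is dominated. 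Combining gives $(\Prec[b])_{i,i}^2 \leq (\Prec[a])_{i,i}^2 + (\Prec[a])_{i,i}^2 = 2(\Prec[a])_{i,i}^2$, as required.

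There is no substantive obstacle here; the result is essentially a one-line consequence of \cref{eq:Prec_rel_a_b}, the positivity and boundedness of the entries of $\Gamma$, and the elementary inequality $\sum_j a_j^2 \leq (\sum_j a_j)^2$ for nonnegative reals. The only conceptual point worth stating explicitly is the reduction of the Löwner comparison to entrywise comparison of diagonals, which is specific to diagonal positive-definite matrices.
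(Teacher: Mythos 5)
Your proof is correct and follows essentially the same route as the paper: both work with the squared diagonal entries via \cref{eq:Prec_rel_a_b}, bound $\sum_j \gamma_{i,j}^2 \leq \eta_i^2$ (your expansion of the square is just the $\ell_1$--$\ell_2$ norm comparison the paper invokes), and deduce $(\Prec[b])_{i,i}^2 \leq 2(\Prec[a])_{i,i}^2$ together with the trivial lower bound. Your explicit remark that the Löwner order reduces to entrywise comparison for positive diagonal matrices is a point the paper leaves implicit, but it does not change the argument.
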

\begin{proof}
    Due to the \(\ell_1\)--\(\ell_2\) norm equivalence, we have that
    \[
	   \Big( \sum_{j=1}^n \gamma_{i,j}^2 \Big)^{\nicefrac 1 2} = \|\gamma_{i,(:)}\|_2 \leq \|\gamma_{i,(:)}\|_1 = \sum_{j=1}^n \gamma_{i,j} = \eta_i.
    \]
    As a result, we have that \( \sum_{j=1}^n \gamma_{i,j}^2  \leq \eta_i^2\), and by basic manipulation
    \begin{align*}
           ( \Prec[b])_{i,i}^2 = \sum_{j=1}^n a_{i,j}^2 
            \leq \mu^2 \eta_i^2 + ( \Prec[a] )_{i,i}^2
            \leq 2 ( \Prec[a] )_{i,i}^2.
    \end{align*}
    Taking the square root of both sides, we obtain \( (\Prec[b])_{i,i} \leq \sqrt{2}  (\Prec[a] )_{i,i}.\)
    Further, by the non--negativity of all the terms involved in \cref{eq:Prec_rel_a_b}, we also have that \( (\Prec[a])_{i,i} \leq (\Prec[b])_{i,i}\).
\end{proof}

\begin{lemma}
\label{lem:Prec_Norm_2_Diag}
The eigenvalues of the preconditioned matrix \( \Prec[b]^{-1} A\) satisfy
\begin{equation*}
    2^{-\nicefrac 1 2} \lambda_{\min} (\Prec[a]^{-1} A) \leq
    \Sigma( \Prec[b]^{-1} A )
    \leq \lambda_{\max} (\Prec[a]^{-1} A).
\end{equation*}
\end{lemma}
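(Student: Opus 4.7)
The plan is to reduce this to a direct consequence of the previous Lemma on spectral equivalence of $\Prec[a]$ and $\Prec[b]$, combined with the min--max (Rayleigh--quotient) characterization of generalized eigenvalues. Since $\Prec[b]$ is symmetric positive definite (its square is a diagonal matrix with strictly positive entries, by the same reasoning used to verify positivity of $\Prec[a]$) and $A$ is symmetric positive definite, the eigenvalues of $\Prec[b]^{-1}A$ coincide with the generalized eigenvalues of the pencil $(A,\Prec[b])$ and hence fill out the image of the generalized Rayleigh quotient
\[
\Rayleigh(A,\Prec[b],\bfx) \;=\; \frac{\langle \bfx, A\bfx\rangle}{\langle \bfx, \Prec[b]\bfx\rangle}, \qquad \bfx\neq \Zeros.
\]

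Next, I would invoke the previous Lemma, which gives $\Prec[a] \preccurlyeq \Prec[b] \preccurlyeq \sqrt{2}\,\Prec[a]$, i.e.\ for every $\bfx\neq \Zeros$,
\[
\langle \bfx, \Prec[a]\bfx\rangle \;\leq\; \langle \bfx, \Prec[b]\bfx\rangle \;\leq\; \sqrt{2}\,\langle \bfx, \Prec[a]\bfx\rangle.
\]
Dividing $\langle \bfx, A\bfx\rangle > 0$ by each of these terms (and reversing inequalities appropriately) immediately yields
\[
\frac{1}{\sqrt{2}}\,\Rayleigh(A,\Prec[a],\bfx) \;\leq\; \Rayleigh(A,\Prec[b],\bfx) \;\leq\; \Rayleigh(A,\Prec[a],\bfx).
\]

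Finally, taking the infimum and supremum of each side over all nonzero $\bfx$, and using the Courant--Fischer characterization $\lambda_{\min}(\Prec[a]^{-1}A)=\inf_{\bfx}\Rayleigh(A,\Prec[a],\bfx)$ and $\lambda_{\max}(\Prec[a]^{-1}A)=\sup_{\bfx}\Rayleigh(A,\Prec[a],\bfx)$, one obtains the stated two--sided bound on $\Sigma(\Prec[b]^{-1}A)$.

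There is no genuine obstacle here; the main thing to be careful about is that although $\Prec[b]^{-1}A$ is not symmetric, it is similar to the symmetric matrix $\Prec[b]^{-1/2} A \Prec[b]^{-1/2}$, so its spectrum is real and is indeed captured by the generalized Rayleigh quotient above. Everything else is a direct consequence of the norm--equivalence bound already established for the two diagonal preconditioners.
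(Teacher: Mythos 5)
Your proposal is correct and follows essentially the same route as the paper: both arguments invoke the spectral equivalence \( \Prec[a] \preccurlyeq \Prec[b] \preccurlyeq \sqrt{2}\,\Prec[a] \) from \cref{lem:Equivalence_Diag_Precs} to sandwich the generalized Rayleigh quotient \( \Rayleigh(A,\Prec[b],\bfx) \) between \( 2^{-\nicefrac 1 2}\Rayleigh(A,\Prec[a],\bfx) \) and \( \Rayleigh(A,\Prec[a],\bfx) \), then conclude via the extremal characterization of the spectrum. Your extra remark on the similarity to \( \Prec[b]^{-\nicefrac 1 2} A\, \Prec[b]^{-\nicefrac 1 2} \) is a harmless (and welcome) bit of added rigor that the paper leaves implicit.
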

\begin{proof}
    The result follows directly from \cref{lem:Equivalence_Diag_Precs}. To see this, consider the Rayleigh quotient
    \(
        \Rayleigh(A, \Prec[b], \bfx) = \frac{ \langle \bfx, A \bfx \rangle }{ \langle \bfx, \Prec[a] \bfx \rangle } \frac{ \langle \bfx, \Prec[a] \bfx \rangle }{ \langle \bfx, \Prec[b] \bfx \rangle }. 
    \)
    The second quotient is bounded in \( [2^{-\nicefrac 1 2}, 1] \); hence, 
    \( 
        2^{-\nicefrac 1 2} \Rayleigh(A, \Prec[a], \bfx) \leq \Rayleigh(A, \Prec[b], \bfx) \leq \Rayleigh(A, \Prec[a], \bfx)
    \)
    for all \(\bfx \in \R^n\).
\end{proof}

\begin{remark}[Computing \(\Gamma^{\circ 2}\)]
    Note that if computing \(\Gamma\) is computationally inefficient, this will also be the case with \( \Gamma^{\circ 2}\). However, we are interested in particular cases of \(\Gamma\): (i) if the entries of \(\Gamma\) are moderated by a variance parameter \(\sigma\) as in \( \gamma_{i,j} = \exp( -\sigma^{-2} r_{i,j} ) \), then \( \gamma_{i,j}^2 = \exp(-(\sigma / \sqrt{2} )^{-2} r_{i,j}) \), so computing \( \Gamma^{\circ 2} \, \Ones\) is as expensive as computing \( \Gamma  \Ones\) with a different \(\sigma\); (ii) if \(\Gamma\) is a composite operator of the form \( \Gamma = \frac{1}{\mathsf L} \sum\limits_{\ell=1}^{\mathsf L} \Gamma_{\ell}\) and \( \Gamma_\ell\) is as in case (i), then by Pascal's simplex we obtain the expansion
    \begin{align}
        \label{eq:Pascal_simplex}
        \gamma_{i,j}^2 &= \frac{1}{\mathsf L^2} \sum_{\ell = 1}^{\mathsf L} (\gamma_\ell)_{i,j}^2 + \frac{2}{\mathsf{L}^2} \sum_{\ell < \kappa} (\gamma_\ell)_{i,j} (\gamma_\kappa)_{i,j}
        \\
        &= 
        \frac{1}{\mathsf{L}^2}\sum_{\ell = 1}^{\mathsf L} \exp\big( {-}(\sigma / \sqrt{2} )^{-2} (r_\ell)_{i,j} \big)
        + \frac{2}{\mathsf L^2} \sum_{\ell < \kappa} 
        \exp\big( {-}\sigma^{-2} (r_\ell + r_\kappa)_{i,j} \big)
        \notag
        .
    \end{align}
    Out of the \( \frac{1}{2} \mathsf{L} (\mathsf{L} + 1) \) terms, the \( \frac{1}{2} \mathsf{L} (\mathsf{L} - 1) \) pairwise products can exceed the computational limitations of the NFFT. Hence, assembling \( \Prec[b] \) would require more computational time than assembling \(A\) itself, making it an unpractical preconditioner. 

    As a result, let us redefine the preconditioner \(\Prec[b]\) using just the first \(\mathsf L\) terms in \cref{eq:Pascal_simplex}:
    \begin{equation}
    \label{eq:ANOVA_Norm_2_Prec}
	\Prec[b'] = \bigg[ \big(\nicefrac{\mu}{\mathsf L}\big)^2 \sum_{\ell=1}^{\mathsf L} \diag( \Gamma_\ell^{\circ 2} \, \mathbf{1}_n ) + ( \Prec[a] )^2 \bigg]^{\nicefrac 1 2} \hspace{-0.3em}.
    \end{equation}
    Hence, we obtain that
    \(
        ( \Prec[a] )_{i,i}^2 \leq
       ( \Prec[b'])_{i,i}^2 
            \leq ( \Prec[b])_{i,i}^2
            \leq 2 ( \Prec[a] )_{i,i}^2.
    \)
    Furthermore, the spectral results in \cref{lem:Prec_Norm_2_Diag} also apply to \(\Prec[b']\). We will use \cref{eq:ANOVA_Norm_2_Prec} whenever we precondition systems based on the ANOVA kernel.
    
\end{remark}

\subsubsection{Dense preconditioners}\label{sec:Dense_Preconditioners}

As proven in \cref{lem:Rayleigh_Diag_Jac} and extended by \cref{lem:Prec_Norm_2_Diag}, the preconditioners \( \Prec[a]\) and \( \Prec[b]\) introduced in the previous section can result in an eigenvalue that is smaller than \(\lambda\) when applied to \(A\). We aim to mitigate this effect by applying the change of basis explored in \cref{sec:CoB_GL}.

Let \( X \in \{Q,U\}\) be any of the change of basis matrices in \cref{eq:QLap,eq:ULap}. Recall that the matrix \( \UnitSim[X]{A}\) that has the form
\begin{equation}
\label{eq:ChangeOfBasisOfAByItsFirstEigenvector}
	\UnitSim[X]{A} =
	\left[
	\begin{array}{@{} c ;{2pt/2pt} c c c c c c c}
		 \lambda & \mathbf{0}_{n-1}^\top
		\\[0.1em]
		\cdashline{1-2}[2pt/2pt]
		\\[-0.9em]
		\mathbf{0}_{n-1} &  \ast
	\end{array}
	\right] = X^{-1} A\, X.
\end{equation}
In this sense, as discussed in \cref{sec:change_of_basis}, we can solve the system \( \UnitSim[X]{A} \bfx = \bfg\) by splitting it so that \( x_1 = g_1/\lambda\) and \( (\UnitSim[X]{A})_{2:n, 2:n} \bfx_{2:n} = \bfg_{2:n}\). For solving the latter system, we would benefit from the action of the preconditioner \( \Prec[] \in \{ \Prec[a], \Prec[b] \}\). However, we need to express the preconditioner under the new basis; i.e., we introduce \( \UnitSim[X]{D} \coloneqq X^{-1} \Prec[]  X\). As a result \( \UnitSim[X]{D}^{-1} \UnitSim[X]{A} = X^{-1} \Prec[]^{-1} A \, X \) has the same range of eigenvalues as the preconditioned system in the original basis. Moreover, notice that \( \UnitSim[X]{D}^{-1} \UnitSim[X]{A}\) is dense as \( \UnitSim[X]{D} \) is dense, otherwise we would not be able to decouple the preconditioned system as for \cref{eq:ChangeOfBasisOfAByItsFirstEigenvector}. 

Our interest turns instead towards the projection of the preconditioner on the second diagonal sub--block of \( \UnitSim[X]{A}\). At this point, let us define the projection operator \(\pi_2: \mathcal{M}_n(\F) \to \mathcal{M}_{n-1}(\F)\) such that \( \pi_2(A) = A_{2:n, 2:n}\).
Now, consider the equivalent system
\begin{equation*}
    \underbrace{
	\left[
	\begin{array}{@{} c ;{2pt/2pt} c c c c c c c}
		 \multirow{2}{*}{$\,\mathbf{0}_{n}$} & \mathbf{0}_{n-1}^\top
		\\[0.1em]
		\cdashline{2-2}[2pt/2pt]
		\\[-0.9em]
		   &  
        \pi_2 ( \UnitSim[X]{A} )
	\end{array}
	\right] 
    }_{ \eqqcolon \UnitSim[X]{S} }
    \left[
	\begin{array}{@{} c }
		 0
		\\[0.1em]
		\cdashline{1-1}[2pt/2pt]
		\\[-0.9em]
		\bfx_{2:n}
	\end{array}
	\right] 
    =
    \left[
    \begin{array}{@{} c }
		 0
		\\[0.1em]
		\cdashline{1-1}[2pt/2pt]
		\\[-0.9em]
		\bfg_{2:n}
	\end{array}
	\right].
\end{equation*}
When we apply \( \UnitSim[X]{D}^{-1}\) to \( \UnitSim[X]{S}\), we obtain a system of the form
\begin{equation}
\label{eq:Higher_Dimension_Operator}
	\left[
	\begin{array}{@{} c ;{2pt/2pt} c c c c c c c}
		 \multirow{2}{*}{$\,\mathbf{0}_{n}$} & \begin{smallmatrix} \ast & \ast & \ast & \ast & \ast & \ast \end{smallmatrix}
		\\[0.1em]
		\cdashline{2-2}[2pt/2pt]
		\\[-0.9em]
		   &  
        \pi_2 ( \UnitSim[X]{D}^{-1})  \pi_2(\UnitSim[X]{A} )
	\end{array}
	\right] 
    \left[
	\begin{array}{@{} c }
		 0
		\\[0.1em]
		\cdashline{1-1}[2pt/2pt]
		\\[-0.9em]
		\bfx_{2:n}
	\end{array}
	\right] 
    =
    \left[
    \begin{array}{@{} c }
		 \ast
		\\[0.1em]
		\cdashline{1-1}[2pt/2pt]
		\\[-0.9em]
		\pi_2 (\UnitSim[X]{D}^{-1}) \bfg_{2:n}
	\end{array}
	\right] .
\end{equation}
However, it is not difficult to also see that \( \pi_2 ( \UnitSim[X]{D}^{-1} \UnitSim[X]{A} ) = \pi_2 (\UnitSim[X]{D}^{-1}) \pi_2 ( \UnitSim[X]{A} )  \) (this is a consequence of the block structure). Hence we define a preconditioner for \( \UnitSim[X]{A}\) by its inverse form:
\[
    \mathcal{P}_{X, \Prec[]}^{-1} \coloneqq
    \left[
	\begin{array}{@{} c ;{2pt/2pt} c c c c c c c}
		 1 & \mathbf{0}_{n-1}^\top
		\\[0.1em]
		\cdashline{1-2}[2pt/2pt]
		\\[-0.9em]
		\mathbf{0}_{n-1} &  \pi_2( \UnitSim[X]{D}^{-1} )
	\end{array}
	\right].
\]
As \( \mathcal{P}_{X, \Prec[]}^{-1} \) is a block matrix, its action on \( \UnitSim[X]{A}\) retains the block structure, and hence we can decouple \( \mathcal{P}_{X, \Prec[]}^{-1} \UnitSim[X]{A}\) and focus just on the sub--block \( \pi_2 ( \UnitSim[X]{D}^{-1} \UnitSim[X]{A} )\). 

The spectral properties of \( \pi_2 ( \UnitSim[X]{D}^{-1} \UnitSim[X]{A} )\) can be studied from the higher--dimensional system \cref{eq:Higher_Dimension_Operator} as its lower diagonal block contains all of the relevant eigen--information. 

\begin{lemma}\label{lemma:Spectral_Control_Pa_on_D}
    Let \(\Prec\) be any of the diagonal preconditioners introduced in \cref{sec:Diagonal_Preconditioners} and \( \UnitSim[X]{D} = X^{-1} \Prec[] X\). Further, let \( X \in \{Q,U\}\) be any of the change of basis matrices in \cref{eq:QLap,eq:ULap}. Then \(\pi_2 ( \UnitSim[X]{D}^{-1} \UnitSim[X]{A}) \) has a strictly positive smallest eigenvalue, and the following holds:
    \[
        a(\Prec[]^{-1} A) - \frac{\lambda}{\lambda + \mu \min\etab} \leq \Sigma\big( \pi_2 ( \UnitSim[X]{D}^{-1} \UnitSim[X]{A} ) \big) \leq \rho(\Prec[]^{-1} A).
    \]
    Moreover, the upper bounds of \cref{lem:Rayleigh_Diag_Jac} are satisfied.
\end{lemma}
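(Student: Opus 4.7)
The plan is to reduce the analysis of $\pi_2(\UnitSim[X]{D}^{-1}\UnitSim[X]{A})=\pi_2(\UnitSim[X]{D}^{-1})\pi_2(\UnitSim[X]{A})$ (using the block identity invoked for \eqref{eq:Higher_Dimension_Operator}) to a symmetric rank--one PSD perturbation. Since $\UnitSim[X]{A}$ is block diagonal with $(1,1)$ entry $\lambda$, we have $\UnitSim[X]{A}=\UnitSim[X]{S}+\lambda\bfe_1\bfe_1^\top$, so
\[
\UnitSim[X]{D}^{-1}\UnitSim[X]{A}=\UnitSim[X]{D}^{-1}\UnitSim[X]{S}+\lambda\UnitSim[X]{D}^{-1}\bfe_1\bfe_1^\top,
\]
where the left--hand side is similar to $\Prec[]^{-1}A$ and $\UnitSim[X]{D}^{-1}\UnitSim[X]{S}$ is block upper triangular (its first column is zero), with spectrum $\{0\}\cup\Sigma(\pi_2(\UnitSim[X]{D}^{-1})\pi_2(\UnitSim[X]{A}))$. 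I would treat first the case $X=U$, where both $\UnitSim[U]{D}=U^\top\Prec[]U$ and $\UnitSim[U]{A}$ are symmetric positive definite, and pass to the symmetric congruences $\widehat{A}:=\UnitSim[U]{D}^{-1/2}\UnitSim[U]{A}\UnitSim[U]{D}^{-1/2}$ and $\widehat{S}:=\UnitSim[U]{D}^{-1/2}\UnitSim[U]{S}\UnitSim[U]{D}^{-1/2}$; these preserve the spectra of their non--symmetric similars and satisfy the symmetric rank--one PSD identity $\widehat{A}=\widehat{S}+\lambda\bfv\bfv^\top$ with $\bfv:=\UnitSim[U]{D}^{-1/2}\bfe_1$. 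Strict positivity of the smallest eigenvalue of $\pi_2(\UnitSim[U]{D}^{-1})\pi_2(\UnitSim[U]{A})$ is then immediate, since both factors are principal submatrices of SPD matrices and their product is similar to the SPD matrix $\pi_2(\UnitSim[U]{D}^{-1})^{1/2}\pi_2(\UnitSim[U]{A})\pi_2(\UnitSim[U]{D}^{-1})^{1/2}$.

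Writing $\sigma_1\leq\cdots\leq\sigma_{n-1}$ and $\nu_1\leq\cdots\leq\nu_n$ for the eigenvalues of $\pi_2(\UnitSim[U]{D}^{-1})\pi_2(\UnitSim[U]{A})$ and $\Prec[]^{-1}A$ respectively, the spectrum of $\widehat{S}$ reads $\{0,\sigma_1,\ldots,\sigma_{n-1}\}$. Interlacing for a symmetric rank--one PSD update gives $\sigma_{n-1}=\lambda_n(\widehat{S})\leq\lambda_n(\widehat{A})=\rho(\Prec[]^{-1}A)$, which is the advertised upper bound; the upper bounds of \cref{lem:Rayleigh_Diag_Jac} (and its extension \cref{lem:Prec_Norm_2_Diag} covering $\Prec[b']$) then follow at once since they are stated on $\rho(\Prec[]^{-1}A)$. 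For the lower bound I would invoke Weyl's inequality at index $2$: $a(\Prec[]^{-1}A)=\nu_2=\lambda_2(\widehat{A})\leq\lambda_2(\widehat{S})+\lambda\|\bfv\|_2^2=\sigma_1+\lambda\|\bfv\|_2^2$, whence $\sigma_1\geq a(\Prec[]^{-1}A)-\lambda\|\bfv\|_2^2$. A direct calculation using $U\bfe_1=\Ones/\sqrt{n}$ yields $\|\bfv\|_2^2=(\UnitSim[U]{D}^{-1})_{1,1}=\tfrac{1}{n}\Ones^\top\Prec[]^{-1}\Ones=\tfrac{1}{n}\sum_i(\Prec[])_{i,i}^{-1}$, and monotonicity combined with $(\Prec[a])_{i,i}=\lambda+\mu\eta_i$ and $(\Prec[b'])_{i,i}\geq(\Prec[a])_{i,i}$ from \cref{lem:Equivalence_Diag_Precs} bounds this by $1/(\lambda+\mu\min\etab)$. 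Multiplication by $\lambda$ produces the stated correction.

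To extend to $X=Q$, set $R:=Q^{-1}U$ and use the explicit form of $Q^{-1}$ from \eqref{eq:QLap}. Since $W:=U_{:,2:n}$ satisfies $\Ones^\top W=0$, a short computation shows that $R$ is block diagonal with $(1,1)$ entry $1/\sqrt{n}$ and lower block $-W_{2:n,:}$; the latter is invertible because $\spn\{\bfe_1\}\cap\Ones^\perp=\{\mathbf{0}\}$. The similarities $\UnitSim[Q]{A}=R\UnitSim[U]{A}R^{-1}$ and $\UnitSim[Q]{D}^{-1}=R\UnitSim[U]{D}^{-1}R^{-1}$ descend to the projections as $\pi_2(\UnitSim[Q]{D}^{-1})\pi_2(\UnitSim[Q]{A})=W_{2:n,:}\bigl(\pi_2(\UnitSim[U]{D}^{-1})\pi_2(\UnitSim[U]{A})\bigr)W_{2:n,:}^{-1}$, so spectra coincide and the bounds from the $X=U$ case transfer verbatim. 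The main obstacle I foresee is realizing that it is the additive Weyl bound---not the tighter interlacing estimate $\sigma_1\geq\nu_1$, which would only deliver $\sigma_1\geq\lambda/(\lambda+\mu\max\etab)$ via \cref{lem:Rayleigh_Diag_Jac}---that produces the advertised form, which in turn hinges on recognizing $\lambda\|\bfv\|_2^2=\lambda(\UnitSim[U]{D}^{-1})_{1,1}$ and its clean estimate in terms of the extrema of $\etab$.
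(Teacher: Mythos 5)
Your proposal is correct, and it proves the same inequalities as the paper, but the technical route differs. Both arguments pivot on the same observation: zeroing the $(1,1)$ entry of $\UnitSim[X]{A}$ is a rank--one deflation, which the paper writes in the original basis as $T = A - \tfrac{\lambda}{n}\Ones[n,n]$ with $\UnitSim[X]{S} = X^{-1}TX$, and which you write in the transformed basis as $\UnitSim[X]{A} = \UnitSim[X]{S} + \lambda\,\bfe_1\bfe_1^\top$. From there the paper stays with generalized Rayleigh quotients $\Rayleigh(T,\Prec,\cdot)$ and applies the Courant--Fischer min--max theorem over the constraint $\Prec^{\nicefrac12}\bfy \perp \Ones$, bounding the correction term by $\tfrac{\lambda}{n}\|\diag(\Prec[a]^{-\nicefrac12})\|_2^2 \le \tfrac{\lambda}{\lambda+\mu\min\etab}$; you instead symmetrize via $\widehat{A} = \widehat{S} + \lambda\bfv\bfv^\top$ with $\bfv = \UnitSim[U]{D}^{-\nicefrac12}\bfe_1$ and invoke Weyl's additive inequality at index $2$ (and monotonicity at index $n$ for the upper bound), identifying the correction as $\lambda\|\bfv\|_2^2 = \tfrac{\lambda}{n}\Ones^\top\Prec^{-1}\Ones$ --- which is exactly the quantity the paper estimates, so your intermediate bound is marginally sharper but lands on the same final inequality. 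Two further remarks: your separate treatment of $X=Q$ via the block--diagonal similarity $R = Q^{-1}U$ is valid but avoidable, since the paper's identity $\UnitSim[X]{D}^{-1}\UnitSim[X]{S} = X^{-1}\Prec^{-1}TX$ holds verbatim for both $X\in\{Q,U\}$ and already shows the relevant spectrum is independent of the choice of basis matrix; and your derivation of strict positivity (product of two SPD principal blocks, similar to an SPD matrix) is a clean substitute for the paper's observation that the smallest eigenvalue of the projected block is the algebraic connectivity of $\Prec^{-1}T$ plus the lower bound itself. What your route buys is a textbook-standard perturbation argument with a transparent provenance for the $\lambda(\UnitSim[U]{D}^{-1})_{1,1}$ correction; what the paper's route buys is that everything remains in the original basis and the bounds of \cref{lem:Rayleigh_Diag_Jac,lem:Equivalence_Diag_Precs} are recycled directly inside a single Rayleigh--quotient computation, with no case split on $X$.
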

\begin{proof}
    Start by noticing that \( \UnitSim[X]{D}^{-1} \UnitSim[X]{S}\) is equivalent to \( \UnitSim[X]{D}^{-\nicefrac 1 2} \UnitSim[X]{S} \UnitSim[X]{D}^{-\nicefrac 1 2}\). 
    Moreover, we have that \(\UnitSim[X]{S}\) is nothing more than a perturbation of \(\UnitSim[X]{A}\):
    \[ 
        \UnitSim[X]{S} = \UnitSim[X]{A} - \diag(\lambda, \Zeros[n-1]) 
        = X^{-1} \Big( A - \frac{\lambda}{n} \Ones[n,n] \Big) X \eqqcolon X^{-1} T X.
    \]
    The last equality comes from the fact that conjugation by \(X\) diagonalizes the matrix \( (\nicefrac{\lambda}{n}) \Ones[n,n] \) by construction of \(Q\) and \(U\) for the graph Laplacian.
    
    At this point, notice that \(T\) is a deflation of \(A\). To see this, observe that
    \( T \Ones = \lambda \Ones - \lambda (\nicefrac n n) \Ones = 0 \Ones\). 
    Further, for any other eigenpair of \(A\), \( (\beta, \bfv) \), we have that \( \Ones \perp \bfv\) and thus 
    \( T \bfv = \beta \bfv \). 
    As a result, the rank--one perturbation of \(A\) by \( (\nicefrac{\lambda}{n}) \Ones[n,n] \) does not modify the eigenvalues of \(A\) that are different than \(\lambda\): \( \Sigma( \UnitSim[X]{S} ) = \{0\} \cup \Sigma(A)\setminus \{\lambda\} \). 
    
    From the fact that 
    \( 
        \UnitSim[X]{D}^{-1} \UnitSim[X]{S} = X^{-1} \Prec[]^{-1} T X 
    \), 
    we can describe the spectrum of the preconditioned operator \( \pi_2 ( \UnitSim[X]{D}^{-1} \UnitSim[X]{A} )\) by a sub--range of the Rayleigh quotient
    \(
       \Rayleigh( T, \Prec[], \bfy ).
    \)
    By the form of \cref{eq:Higher_Dimension_Operator}, the smallest value this quotient can take is \(0\), this value is attained by selecting \( \bfy = \Ones\). We also have that
    \begin{align*}
        \Rayleigh( T, \Prec[], \bfy ) 
        &= 
        \frac{ \langle \bfy, T \bfy \rangle }{ \langle \bfy, \Prec[] \bfy \rangle }
        = \frac{ \langle \bfy, A \bfy \rangle }{ \langle \bfy, \Prec[] \bfy \rangle }
        - \frac{\lambda}{n} \frac{ \langle \bfy, \Ones[n,n] \bfy \rangle }{ \langle \bfy, \Prec[] \bfy \rangle }
        \leq \frac{ \langle \bfy, A \bfy \rangle }{ \langle \bfy, \Prec[] \bfy \rangle },
    \intertext{which is bounded by the spectral radius of \( \Prec[]^{-1} A\). Further, taking into account \cref{lem:Equivalence_Diag_Precs}, we get}
    \Rayleigh( T, \Prec[], \bfy )
        &\leq 
        \frac{ \langle \bfy, A \bfy \rangle }{ \langle \bfy, \Prec[] \bfy \rangle }
        =
        \frac{ \langle \bfy, A\bfy \rangle }{ \langle \bfy, \Prec[a] \bfy \rangle } \frac{ \langle \bfy, \Prec[a] \bfy \rangle }{ \langle \bfy, \Prec[] \bfy \rangle }
        \leq 
        \frac{ \langle \bfy, A\bfy \rangle }{ \langle \bfy, \Prec[a] \bfy \rangle }.
    \end{align*}
    As a result, the upper bound \cref{eq:Bounds_on_prec_jacobi_diagonal} holds.
    
    Now, the smallest eigenvalue of \( \pi_2 ( \UnitSim[X]{D}^{-1} \UnitSim[X]{A} )\) is just the algebraic connectivity of \( \UnitSim[X]{D}^{-1} \UnitSim[X]{S} \). Here, notice that
\[
	a(\Prec[]^{-1} T)  
	= a( \Prec[]^{-\nicefrac 1 2} T \Prec[]^{-\nicefrac 1 2} ) 
	= a\del{ \Prec[]^{-\nicefrac 1 2} A \Prec[]^{-\nicefrac 1 2} - \frac{\lambda}{n} \Prec[]^{-\nicefrac 1 2} \Ones[n,n] \Prec[]^{-\nicefrac 1 2} },
\]
and $a(\Prec[]^{-1} A) =
	a( \Prec[]^{-\nicefrac 1 2} A \Prec[]^{-\nicefrac 1 2} )$. Now, by Weyl's inequality \cite[Corollary 8.1.6]{Golub2013}, we have that
\[
	a(\Prec[]^{-1} A) \leq a(\Prec[]^{-1} T) + \frac{\lambda}{n} \rho\del{\Prec[]^{-\nicefrac 1 2} \Ones[n,n] \Prec[]^{-\nicefrac 1 2} }.
\]
Thus, we obtain 
\[
	a(\Prec[]^{-1} T)  \geq  a(\Prec[]^{-1} A) -  \frac{\lambda}{n} \rho\del{\Prec[]^{-\nicefrac 1 2} \Ones[n,n] \Prec[]^{-\nicefrac 1 2} }
	=
	a(\Prec[]^{-1} A) -  \frac{\lambda}{n} \rho\del{\Prec[]^{-1} \Ones[n,n] } .
\]
    At this point, let us define \( \bfu \coloneqq \diag(\Prec[a]^{-\nicefrac 1 2})\), and observe that from \cref{lem:Equivalence_Diag_Precs}:
    \[
        \max_{ \bfy }
        \frac{ \langle \bfy, \Ones[n,n] \bfy \rangle }{\langle \bfy, \Prec[a] \bfy \rangle}
        \frac{\langle \bfy, \Prec[a] \bfy \rangle}{\langle \bfy, \Prec \bfy \rangle}
        \leq
        \max_{ \bfx } 
        \frac{ \langle \bfx, \Prec[a]^{-\nicefrac 1 2} \Ones[n,n] \Prec[a]^{-\nicefrac 1 2} \bfx \rangle }{\langle \bfx, \bfx \rangle}
        =
        \max_{ \bfx } 
        \frac{ \langle \bfx, \bfu \bfu^\top \bfx \rangle }{\langle \bfx, \bfx \rangle}
        \leq \|\bfu\|_2^2.
    \]
    Here \( \bfu = (\lambda + \mu \etab)^{\circ -\nicefrac 1 2} \) and by the \(\ell_2\)--\(\ell_\infty\) norm equivalence, we have that
    \( \| \bfu \|_2^2 \leq n \|\bfu\|_\infty^2 = n (\lambda + \mu \min\etab)^{-1} \). Going back to the algebraic connectivity of \( \Prec[]^{-1} T\), we obtain
    \begin{equation}
        \label{eq:Lower_bound_Alg_Conect}
        a(\Prec[]^{-1} T) \geq a(\Prec[]^{-1} A) - \frac{\lambda}{\lambda + \mu \min\etab}.
    \end{equation}
\end{proof}
\begin{remark}
    Notice that if \( \lambda \ll \mu \min \etab\), then the last fraction in \cref{eq:Lower_bound_Alg_Conect} is negligible. Something similar happens as well when \( n \gg 1\), as the term \( (\nicefrac{\lambda}{n} \Ones[n,n]) \) could stop being significant in floating point arithmetic. 
\end{remark}

\subsubsection{Comparison of preconditioners}\label{sec:Initial_Comparison}

At this point, let us observe how the preconditioners analyzed in \cref{sec:Diagonal_Preconditioners,sec:Dense_Preconditioners} transform the eigenvalues of a particular instance of the ANOVA kernel. For this, we assembled a kernel using the image in \cref{sub:Imgs_Cell}, and computed the eigenvalues of the exact unnormalized extended Gaussian ANOVA kernel based on a scaled sample with \(n = 3\,630\) and \(\mathsf{L} = 41\).

\textbf{Effects of \(\sigma\).}
To build any given kernel, we need to select a real value of \(\sigma > 0\). This parameter plays a critical rôle in shaping the behavior of each kernel. It determines the smoothness and spread of the function \( s(\sigma; r) \coloneqq \exp\{- (\nicefrac{r}{\sigma} )^2\} \), which in turn influences the eigenvalues of the kernel. Observe that \( s(\sigma; r) \) is bounded with values in \( [0,1]\). Moreover, for any nontrivial \( r \) it holds that
\[
	\lim_{\sigma \to \infty} s(\sigma;r) =  \lim_{y \downarrow 0} e^{-y} = e^{0} = 1
	\qquad\text{and}\qquad
	\lim_{\sigma \to 0} s(\sigma;r) = \lim_{Y \uparrow \infty} e^{-Y} = 0.
\]
As \(\sigma\) increases, \(s\) approaches \(1\), indicating a smoother, broader interaction between features. Conversely, as \(\sigma\) approaches \(0\), \(s\) trends towards \(0\), reflecting sharper, more localized interactions. In the trivial case \(r = 0\), the value of \(\sigma\) does not have an effect on the interaction.

Since the off--diagonal entries of \(\Gamma\) are given by terms of the form \( \frac{1}{{\mathsf L}} \sum\limits_{\ell=1}^{\mathsf L} s(\sigma; (r_\ell)_{i,j})\) for all \( i\neq j\), we can conclude that \( \sigma\) is a parameter that interpolates between one of the matrices in the set \( \{\Zeros[n,n], \mathbf{E} \} \) and the matrix of ones \( \Ones[n,n]\); i.e., the map \(\sigma \mapsto A\) is an interpolator in the set of graph Laplacians. Here, the symmetric matrix \(\mathbf{E}\) has nonnegative entries in the range \([\mathsf{L}^{-1},1]\) which only occur in the entries where at least one \((\gamma_\ell)_{i,j}\) is equal to one. From the upper end of the interpolation, the uniform bound \( \rho(A) \leq \lambda + \mu n \) holds for any \(\sigma > 0\). Let us label the resulting limit operators as \( A_{\sigma \to \infty} \coloneqq \lambda \Idn + \mu( n \Idn - \Ones[n,n] ) \), \( A_{1,\sigma \to 0} \coloneqq \lambda \Idn \) and \( A_{2,\sigma \to 0} \coloneqq \lambda \Idn + \mu ( \diag(\mathbf{E}\Ones) - \mathbf{E}) \). Notice that for the last limit case, it is possible for the matrix \( \mathbf{E} \) to be associated with a disconnected graph. In fact, any \(\Gamma_\ell\) within such a regime will either weight a complete graph or a disconnected graph; thus, \(\mathbf{E}\) represents a convex combination of weights associated with disconnected and complete graphs.

The smoothness of \(s\) suggests that the eigenvalues of \(A\) and the preconditioned operators will also vary smoothly with respect to \(\sigma \in [10, 1.4 \times 10^3]\). Effectively, this is showcased in \cref{fig:Sigma_Eigs}. Here, four panels depict how different values of \(\sigma\) result in different spectra and condition numbers for \(A\) and its preconditioned variants. The values of \(\sigma\) were chosen such that the gap between \( \rho(A) \) and \(a(A)\) was larger than 1. The eigenvalue curves were subsampled to improve the visualization. Furthermore, the eigenvalue maps of \( \Prec[b]^{-1}A\) and \( \pi_2 ( \mathsf{D}^{-1}_{b,X} \UnitSim[X]{A} ) \) were not included as they behave in a very similar way as \( \Prec[a]^{-1}A\) and \( \pi_2 ( \mathsf{D}^{-1}_{a,X} \UnitSim[X]{A} ) \), presenting almost identical curves and ranges of values. All eigenvalues were computed to machine precision.

\begin{figure}[ht!]
\centering
    \subcaptionbox{Eigenvalues of $A$ with varying \(\sigma\) \label{sub:Eigs_A_Sigma}}
    {
    \includegraphics[scale=0.6,draft=false]{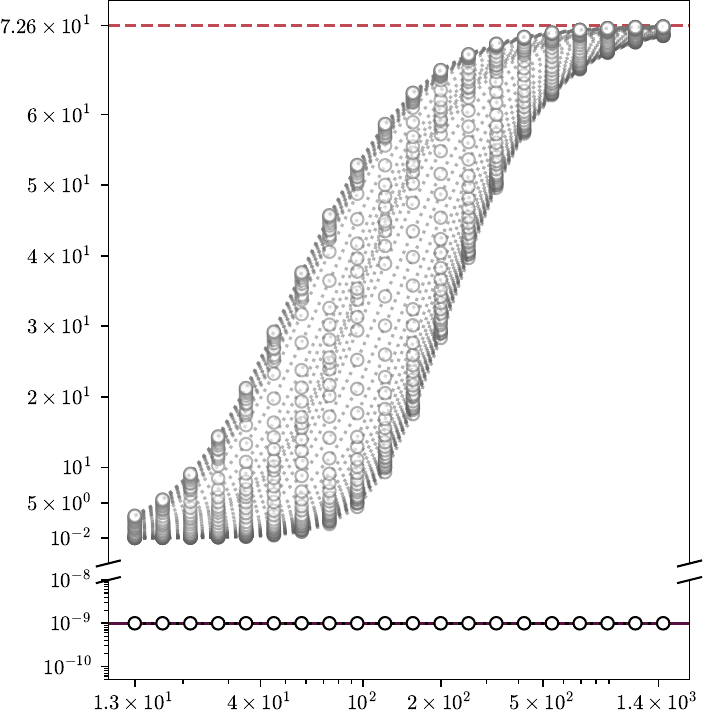}
    }
    \hspace{0.2em}
    \subcaptionbox{Eigenvalues of \(\Prec[a]^{-1} A\) with varying \(\sigma\) 
    \label{sub:Eigs_P_a_A_Sigma}}
       {\includegraphics[scale=0.6,draft=false]{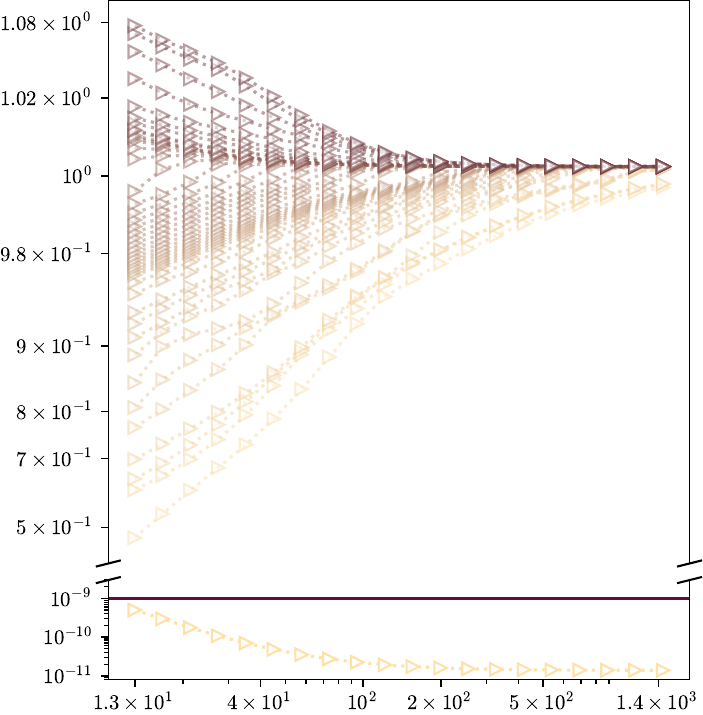}}
    \\
    \subcaptionbox{Comparison of condition number with varying \(\sigma\) \label{sub:Cond_A_Sigma}}
    {
    \includegraphics[scale=0.6,draft=false]{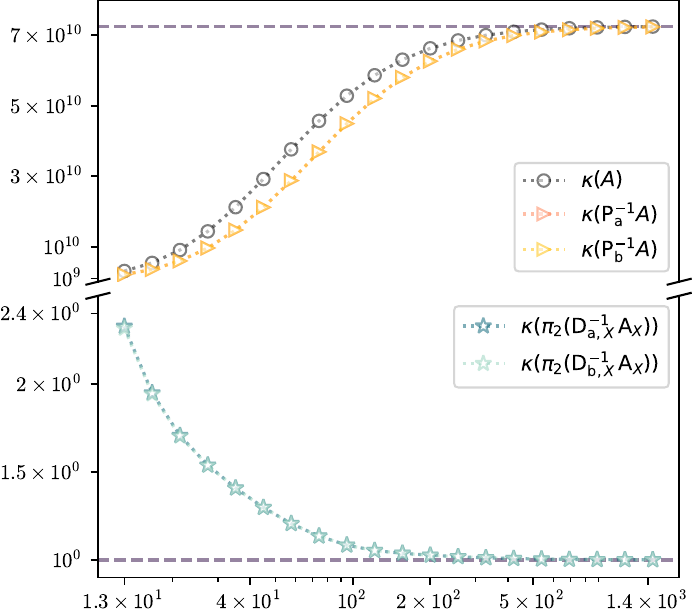}
    }
    \hspace{0.2em}
    \subcaptionbox{Eigenvalues of \( \pi_2 ( \mathsf{D}^{-1}_{a,X} \UnitSim[X]{A} ) \) with varying \(\sigma\) 
    \label{sub:Eigs_D_a_A_Sigma}}
       {\includegraphics[scale=0.6,draft=false]{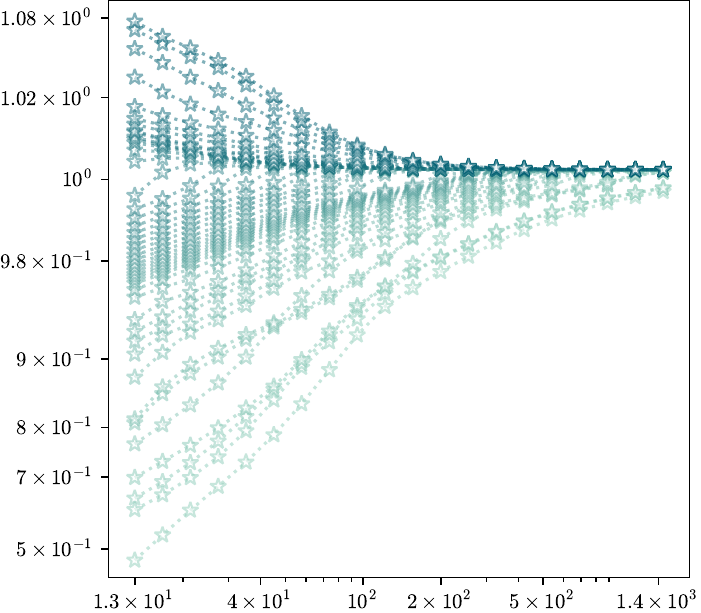}}
    \vspace{-1em}
    \caption{
    Evolution of eigenvalues and condition number as \(\sigma\) varies in the interval \([10,1.4 \times 10^3]\) and with \(\lambda = 10^{-9}\). All the horizontal axes are displayed in logarithmic scale.
    Each ordered quantity is connected by a dotted line as \(\sigma\) increases, suggesting a continuous effect of \(\sigma\).
    \newline
    \textbf{Panel a} displays each point of \(\Sigma(A)\) corresponding to a fixed value of \(\sigma\). A plum horizontal solid line at \(\lambda\) highlights the smallest eigenvalue. The dashed red line at the top represents the upper bound \(\lambda + \mu n\). 
    \textbf{Panel b} represents \(\Sigma(\Prec[a]^{-1} A)\) as \(\sigma\) evolves. The vertical axis features a scaling around 1 for improved visualization.
    \textbf{Panel c} showcases the evolution of the condition number of \( A\) and its preconditioned variants. The upper and lower bounds in dashed purple lines represent \( 1\) and \( 1 + \lambda^{-1} \mu n \).
    \textbf{Panel d} represents \(\Sigma( \pi_2 ( \mathsf{D}^{-1}_{a,X} \UnitSim[X]{A} ) )\) as \(\sigma\) evolves. The vertical axis features a scaling around 1.
    }
\label{fig:Sigma_Eigs}
\vspace{-1\baselineskip}
\end{figure}

In panel (a), we observe that small and large values of \(\sigma\) result in clustered eigenvalues, as more information of the underlying noisy samples is trimmed by the bounds of \(s\); i.e., \(s\) acts as a cutoff or activation function. Notice that several choices of \(\sigma\) result in spectral radii that are close to the upper bound \(\lambda + \mu n \). For moderate values of \(\sigma\), we instead observe that \(\Sigma(A)\) can be spread between one and two orders of magnitude.

In panels (b) and (d), we observe the eigenvalues of the preconditioned operators \( \Prec[a]^{-1} A\) and \( \pi_2 ( \mathsf{D}^{-1}_{a,X} \UnitSim[X]{A} ) \). We observe that all the eigenvalues with preconditioning are effectively clustered within the interval \( [10^{-11}, 1.08]\). The clustering effect improves for \( \sigma > 50\) by defining a cluster around \(1\) for both operators, with a gap below \(10^{-1}\), and a second cluster for \( \Prec[a]^{-1} A\) isolating the smallest eigenvalue, which is closer to \(\lambda\). Let us focus on \( \Prec[a]^{-1} A\), as its spectrum controls the eigenvalues of \( \pi_2 ( \mathsf{D}^{-1}_{a,X} \UnitSim[X]{A} ) \) due to \cref{lemma:Spectral_Control_Pa_on_D}. Per our discussion on \(\sigma\), the limit operator \( A_{\sigma \to \infty} \) has a two point spectrum given by \( \Sigma(A_\infty) = \lambda + \mu \{0,n\}\), and its spectral radius presents a geometric multiplicity of \(n-1\). The limit preconditioner is just a scaling of the identity, yielding 
\( 
    \Sigma( \Prec[a]^{-1} A_\infty) =  \big\{ \lambda [\lambda + \mu (n-1)]^{-1}, (\lambda + \mu n)[\lambda + \mu (n-1)]^{-1}  \big\} .
\)
Here, the preconditioned spectral radius is bounded within \( [1,2]\) for \(n \geq 2\) and quickly approaches \(1\) even for small values of \(n\), regardless of the values of \(\mu\) and \(\lambda\). As a result, it is not surprising that the first cluster quickly pushes the second--to--last eigenvalues towards \(1\), while on the other hand the first preconditioned eigenvalue approaches \( \lambda [\lambda + \mu (n-1)]^{-1}\), which in this case is approximately \( 10^{-11} \). Notice that this result is independent of the off--diagonal values of \( \Gamma_\ell\), as any zero value will remain unchanged as \(\sigma\) increases. By contrast, when \(\sigma\) becomes small, the behavior of \(A\) and its preconditioned variants is based on the limit cases \(A_{\sigma\to 0}\). In one setting, if there is no matrix \( \Gamma_\ell \) with off--diagonal values equal to \(1\), then we have the limiting case \( A_{1,\sigma\to 0} = \lambda \Idn\). As a result, the eigenvalues of the preconditioned system will be clustered around \(1\) when \(\sigma \ll 1\). On the other hand, if there are off--diagonal entries equating \(1\) for any \( \Gamma_\ell \), then \(A\) will get close to \( A_{2,\sigma\to 0} \) as \(\sigma\) becomes smaller. From \(\mathbf{E}\) we can identify the connected components of the underlying graph and its isolated nodes are also given by any zero entries in \( \mathbf{E}\Ones\). Let us suppose that there are \( d_1\) isolated nodes and \( d_2\) connected components with at least two nodes. Then \( \lambda \) will have geometric multiplicity \( d_1 + d_2 + 1\) for \( A_{2,\sigma\to 0} \). With preconditioning, any eigenvalue associated with an isolated node will have value \(1\). We can then decompose \( \mathbf{E} \) into \(d_2\) connected components of sizes \((n_j)_{j \in \llb 1,d_2\rrb}\), and work independently with the submatrices \( \lambda \Idn[n_j] + \mu \diag(\mathbf{E}_j \Ones[n_j] ) - \mu \mathbf{E}_j \). Here, the eigenvalues under preconditioning will span \((0, 2]\). An interesting case, which is depicted in panels (b) and (d), occurs when \(\sigma\) has an intermediate value, say \( 1 < \sigma < 50\): here, we observe that the entries in \( \mathbf{E}\) become relevant, hence the spectrum of \( \Prec[a]^{-1} A \) starts to spread out in \( (0,2]\). For cases when \( A_{1,\sigma \to 0} \) is the limit case, intermediate values of \(\sigma\) yield a similar behavior, as some entries in each \(\Gamma_\ell\) get pulled down to \(0\) but entries close to \(1\) remain; thus, a spread of the spectrum of the preconditioned system in \((0,2]\) is expected.

In panel (c), the condition number of \(A\) and its preconditioned variants are included. In this case, the condition number when applying \( \Prec[b]^{-1} \) and its projection through \(X\) was included as well. For large values of \(\sigma\), we can observe that the unpreconditioned operator and the diagonal preconditioners display a condition number that steadily increases up to \( 1 + \lambda^{-1} \mu n \). This behavior is determined by the value of the smallest eigenvalue. In contrast, the dense preconditioners display a condition number approaching \(1\). This can be explained as \(\Prec[a]^{-1} A\) displays a decreasing gap between the algebraic connectivity and its spectral radius when \(\sigma\) is large, as seen in panel (d). It is without surprise that the condition number for moderate and small values of \(\sigma\) will depend on the limit \( A_{\sigma \to 0}\). For moderate values, the spread of the eigenvalues with preconditioning in \( (0,2]\) will increase the condition number for the dense preconditioners, while the opposite behavior occurs with the diagonal preconditioners as the smallest eigenvalue grows. As \(\sigma\) goes below \(1\), we can expect a limiting condition number strictly greater than \(1\) due to the underlying presence of \( \mathbf{E}\), or a condition number equal to \(1\) for the limit \( A_{1,\sigma\to 0}\).

\begin{figure}[h!]
\centering
	\includegraphics[scale=0.65,draft=false]{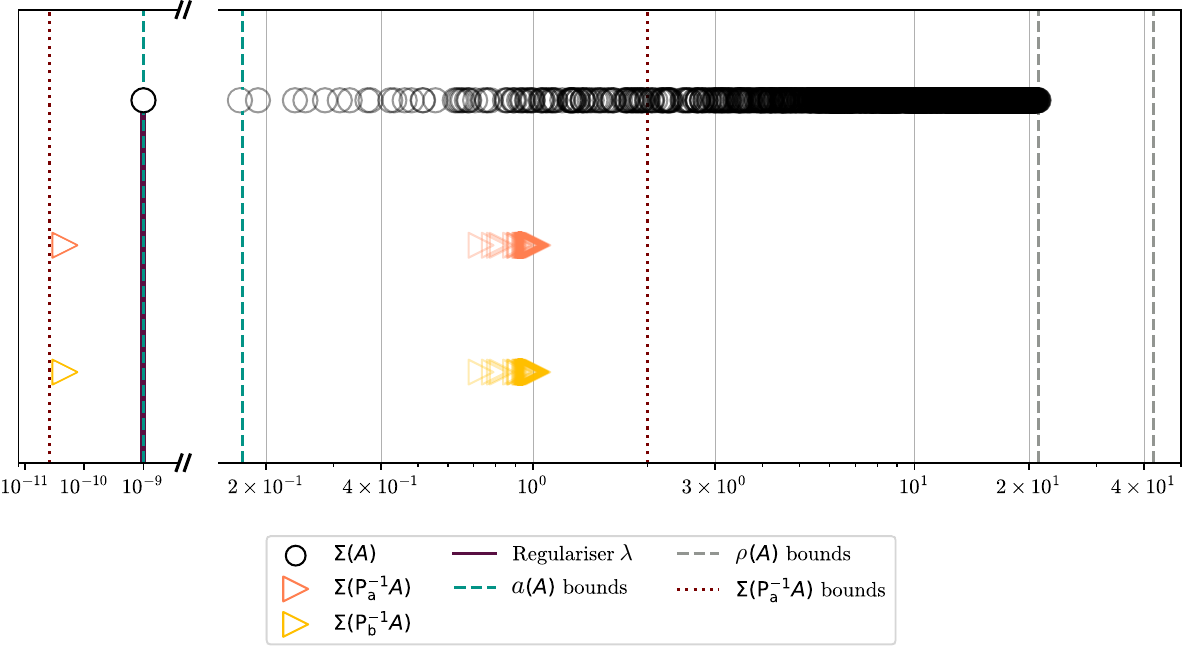}
\caption{Comparative display of the eigenvalues of \(A\), \(\Prec[a]^{-1} A\), and \(\Prec[b]^{-1} A\) with \( \lambda = 10^{-9}\). The horizontal axis quantifies each eigenvalue, while the vertical alignment is non--quantitative and solely for visual separation. 
\newline
\textbf{Top row (black $\ocircle$ markers):} \(\Sigma(A)\) is represented. The grey dashed lines on the right are the bounds for \(\rho(A)\) as derived in \cref{r:general_spectral_bounds}. The bounds for \(a(A)\), as per \cref{lemma:UpperConnectivity,lem:lower_bound_algebraic_connectivity}, are depicted with teal dashed lines. A plum vertical solid line at \(\lambda\) highlights the smallest eigenvalue.
\textbf{Middle row (orange $\vartriangleright$ markers):} \(\Sigma(\Prec[a]^{-1} A)\) is represented. The upper and lower bounds on this set based on \cref{lem:Rayleigh_Diag_Jac} are included in dotted red lines.
\textbf{Bottom row (yellow $\vartriangleright$ markers):} \(\Sigma(\Prec[b]^{-1} A)\) is represented. The bounds from \cref{lem:Prec_Norm_2_Diag} are omitted for visual clarity.
}
\label{fig:Eigs_and_Diagonal_Precs}
\vspace{-1.5\baselineskip}
\end{figure}

\vspace{0.5\baselineskip}
\textbf{Comparison at working range.}
In \cref{sub:Eigs_A_Sigma}, we depicted the effects of \(\sigma\) on the spectrum of \(A\) whenever the condition \( \rho(A) - a(A) > 1\) is satisfied. This condition allows \(A\) to retain as much information as possible from the noisy data used to build the ANOVA kernel, without the caveats of either reducing \(\Gamma\) to a matrix of ones minus the identity (\(\sigma\) large) or allowing \(\lambda\) to have a greater geometric multiplicity (\(\sigma\) small). We will call the range of values of \(\sigma\) that satisfy the condition \( \rho(A) - a(A) > 1\) as the \emph{working range}. In what follows, we will fix \(\sigma\) under this regime and focus instead on the effectiveness of the preconditioners.

\cref{fig:Eigs_and_Diagonal_Precs} showcases the effects of the diagonal preconditioners \(\Prec[a]\) and \(\Prec[b]\) when applied to \(A\). The top row displays the spectrum of \(A\) for \(\lambda = 10^{-9}\). Here, the second--to--last eigenvalues of \(A\) are roughly spread in the interval \( 2 \times [10^{-1}, 10^1]\). The derived bounds on \(a(A)\) and \(\rho(A)\) from \cref{r:general_spectral_bounds,lemma:UpperConnectivity,lem:lower_bound_algebraic_connectivity} are included and suggest we could give an approximate criteria on whether we should precondition \(A\) based on the magnitude of \(\lambda\), as a value of \(\lambda\) larger than the upper bound on \( a(A) \) would already result in a well--conditioned system. The middle and bottom rows depict the eigenvalues of \(A\) preconditioned with \(\Prec[a]\) and \(\Prec[b]\). Here we observe that the bounds \cref{lem:Rayleigh_Diag_Jac} are close to the smallest and largest eigenvalues with preconditioning. Moreover, the algebraic connectivity and spectral radius of the preconditioned matrices tend to \(1\), reflecting the effectiveness of the diagonal preconditioners for moderate and large values of \(\lambda\). In this case, however, as \(\lambda\) is close to zero, the smallest eigenvalue with preconditioning is actually below \(\lambda\), which significantly increases the condition number as observed in \cref{sub:Cond_A_Sigma}.

\begin{figure}[htbp]
\centering
	\includegraphics[scale=0.65,draft=false]{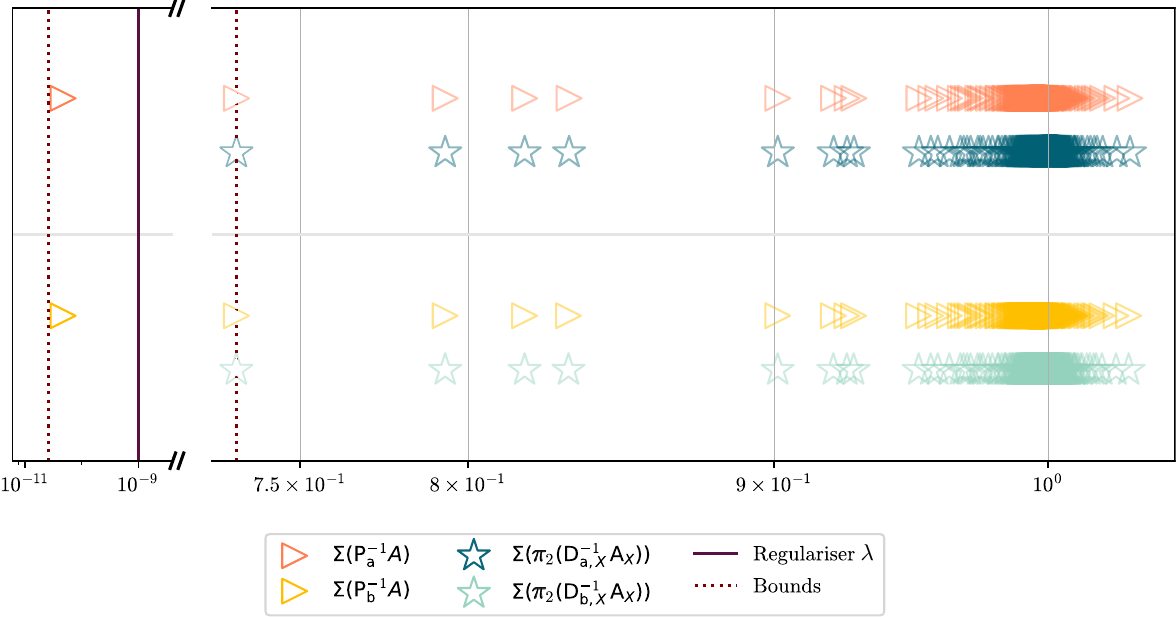}
\caption{Comparative display of the eigenvalues of \(\Prec[a]^{-1} A\), \( \pi_2 ( \mathsf{D}^{-1}_{a,X} \UnitSim[X]{A} ) \), \(\Prec[b]^{-1} A\), and \( \pi_2 ( \mathsf{D}^{-1}_{b,X} \UnitSim[X]{A} ) \) with \( \lambda = 10^{-9}\). The horizontal axis quantifies each eigenvalue (in logarithmic scale), while the vertical alignment is non--quantitative and solely for visual separation. 
\newline
\textbf{Top two rows (orange $\vartriangleright$ and dark blue \(\star\) markers):} \(\Sigma(\Prec[a]^{-1} A)\) and \( \Sigma \big( \pi_2 ( \mathsf{D}^{-1}_{a,X} \UnitSim[X]{A} ) \big) \) are represented. 
From left to right, the two dotted lines in red represent the lower bounds in \cref{lem:Rayleigh_Diag_Jac,lemma:Spectral_Control_Pa_on_D}. A plum vertical solid line at \(\lambda\) highlights the smallest eigenvalue of the unpreconditioned operator \(A\).
\textbf{Bottom last two rows (yellow $\vartriangleright$ and light mint \(\star\) markers):} \(\Sigma(\Prec[b]^{-1} A)\) and \( \Sigma \big( \pi_2 ( \mathsf{D}^{-1}_{b,X} \UnitSim[X]{A} ) \big) \) are represented. 
}
\label{fig:Eigs_Proj_and_Diags}
\vspace{-1.25\baselineskip}
\end{figure}

In \cref{fig:Eigs_Proj_and_Diags}, the diagonal preconditioners from \cref{sec:Diagonal_Preconditioners} are contrasted against their dense variants introduced in \cref{sec:Dense_Preconditioners}. Let us focus on \( \Prec[a]\) and its dense variant. Visually, we do not observe a significant difference between the two sets of eigenvalues except that the smallest eigenvalue of \( \pi_2 ( \mathsf{D}^{-1}_{a,X} \UnitSim[X]{A} ) \) is now larger than \(\lambda\) and very close to the algebraic connectivity of \( \Prec[a]^{-1} A\). As a result, the dense preconditioner and projection technique are able to compress the eigenvalues of \(A\) in a similar way as the diagonal preconditioner, without the caveat of producing a smaller minimum eigenvalue. Now, if we compare the sets \( \Sigma( \Prec[a]^{-1} A ) \setminus \big\{ \min \Sigma( \Prec[a]^{-1} A ) \big\} \) and \( \Sigma\big( \pi_2 ( \mathsf{D}^{-1}_{a,X} \UnitSim[X]{A} ) \big) \), we do not observe a significant visual difference. This is due to the fact that numerically they are separated by a distance of order \( 10^{-11}\). Let \( c = \min \Sigma\big( \pi_2 ( \mathsf{D}^{-1}_{a,X} \UnitSim[X]{A} ) \big) \), then \( c \geq a(\Prec[a]^{-1} A) - \lambda [\lambda + \mu \min \etab]^{-1} \) by \cref{lemma:Spectral_Control_Pa_on_D}. For the particular choice of parameters and data to generate this example, we obtained that \( \lambda [\lambda + \mu \min \etab]^{-1} \approx 10^{-6} \), \( c < a(\Prec[a]^{-1} A) \), and indeed the inequality was numerically satisfied. A similar behavior is displayed for the second preconditioner \(\Prec[b]\) and its dense variant. Motivated by the spectral equivalence of the preconditioners, as in \cref{lem:Equivalence_Diag_Precs}, we also checked numerically \cref{lemma:Spectral_Control_Pa_on_D}, and found that the pairwise distance between the two sets of eigenvalues is of order \(10^{-11}\).

\section{Numerical experiments}\label{sec:Numerics}

In this section we test and showcase the effectiveness of using NFFT for fast summations when solving systems of the form \cref{eq:discretized-state} and the performance of the preconditioners studied in the previous section.

The unnormalized extended Gaussian ANOVA kernel \(\Gamma\) has the form of a composite nonlocal filter, hence it is natural to test it in the context of image denoising. To assemble \(\Gamma\) and each subkernel \(\Gamma_\ell\), we will consider a similar approach to \cite{D’Elia2021}. Given a clean image \(\bfu_{\textsf c}\) with \(n\) pixels, we generate some additive noise \( \mathbf{h} \sim \mathcal{N}(\Zeros,s^2\Idn)\) to obtain a noisy variant \( \bff \coloneqq \bfu_{\textsf c} + \mathbf{h}\). Then, a set of patches based on \(\bff\) is formed by tracing closed square regions of radius \(\rho\) for each pixel. We pad with zeros as needed in the areas of pixels that are below a distance $\rho$ to the border. This results in a set of features of size \( n\times (2\rho+1)^2\).

Due to the limitations of the NFFT regarding the size of the feature space, we process the features into windows of size \( n \times \hat{d}\), where \( \hat{d} \in \{1,2,3\}\). For the generation of such windows of features, we follow the approach of \cite{Nestler2022,WaPeSt2023} which relies on the Mutual Information Score (MIS) to rank and identify the most relevant features before separating them into \(\mathsf{L} = \lceil (2\rho+1)^2 /3 \rceil\) windows. Then we assemble \(\Gamma\) as an operator that weights the action of each subkernel \(\Gamma_\ell\) for \(\ell \in \llb 1,\mathsf{L} \rrb\). If the available memory allows it, we also store \(\Gamma\) in its matrix form for comparison purposes.

The feature engineering step is crucial for reducing the computational complexity of kernel evaluation for large--scale instances while ensuring that the most informative components are retained. While the use of small--sized features allows us to use the NFFT, more windows lead to more kernels what leads to more large--scale kernel evaluations for solving dense linear systems; as a result, we ought not incorporate all possible feature interactions. Specifically, by computing MIS, we can take the noisy pixel values in \(\bff\) as a target that acts as a proxy for the underlying clean image, and these are contrasted against the patches. Thus we obtain features that are statistically predictive of the observed data with a low computational cost. However, the MIS is a univariate measure that does not examine the impact of a combination of several features on predicting the target. In order to study the computational tradeoff of feature combination and selection to capture linear and nonlinear relationships, \cite{Wagner2024} provides a thorough study on alternative approaches for feature engineering. Moreover, the study of the patch distribution \cite{Piening2024} could potentially be used in future work to develop alternative purpose--designed windows targeted for denoising and other imaging tasks.

A nonlocal image denoising task approximates \( \bfu_{\textsf c}\) with the solution of \( (\lambda \Idn + \mu L) \bfu = \lambda \bff\). The value of the regularizer \(\lambda\) determines how much of the information in \(\bff\) is retained. The value of \(\mu\) works as a scaling parameter that weights the action of the ANOVA kernel and, in principle, can be omitted. However, in practice this parameter is useful to control the range of intensity values of the product \(\mu L\bfu\) and prevent cancellation errors. Notice that, if we fix \(\mu\), then a different choice for \(\lambda\) will result in a different approximation of \(\bfu_{\textsf c}\).

This section is structured as follows. In \cref{ssec:Pics}, two image datasets are presented, which we use to set up and run our experiments. \Cref{ssec:Op_Time} showcases the computational gain of using the NFFT for fast matrix--vector products. It is quickly seen that, as the dimension of the problem grows, explicitly computing and storing each subkernel becomes a computational burden that the NFFT easily overcomes. This is again exemplified in \cref{ssec:LinSys}, where the two versions of the operator are employed to solve a denoising problem using direct and iterative methods. In \cref{ssec:CG_Compare} the different preconditioning techniques developed in \cref{sec:Prec_GLs} are tested when using preconditioned CG to solve a denoising task. Finally, \cref{ssec:Learning} solves a parameter learning problem for recovering a dataset of images using the preconditioned CG iteration and the NFFT fast summation method.

All the experiments were performed on a MacBook Pro 2020 M1 with 16 GB RAM. The code was written in Python 3.9 and relies mainly on \texttt{NumPy} 1.23, \texttt{SciPy} 1.11.1, and \texttt{FastAdj} 0.2, \texttt{NFFT4ANOVA}, and \texttt{FastAdjacency2} which yield an interface with the \texttt{NFFT3} library \cite{Keiner2009}. The NFFT does not require the use of costly hardware or access to HPC systems, which facilitates its broad and accessible application.

\subsection{Images}\label{ssec:Pics}

Our set of experiments will be based on two image datasets. We denote by \(n_1\) and \(n_2\) the vertical and horizontal size of a given picture in the image domain, from which we obtain the computational dimension \(n \coloneqq n_1 n_2\). We considered single channel grayscale images for illustrating the method; however, in principle, color information can be added to the feature set resulting only in additional windows but not dampening the performance of the NFFT--based algorithms. The first dataset, used for the numerical experiments in \cref{sec:Initial_Comparison} and \cref{ssec:Op_Time} to \cref{ssec:CG_Compare}, is displayed in \cref{fig:Experimental_images}. These images are sourced from the \emph{Test and Example} data module of the \texttt{scikit-image} package \cite{scikit-image}. \cref{sub:Imgs_Cell} and \cref{sub:Imgs_Brain} feature low contrast with close--to--flat regions. \cref{sub:Imgs_Kidney} instead has high contrast with several jumps in intensity in the depicted patterns. Finally, \cref{sub:Imgs_Cat} features low and high contrast regions, patterns, and textures. The second dataset, sourced instead for parameter learning in \cref{ssec:Learning}, comprises pictures of cats generously provided by individuals who granted consent to their use for this research. The variety of cats and backgrounds implies that any recovered image will feature multiple jumps in accordance to the presence of patterns, textures, and areas of low and high contrast mixed all together.

\begin{figure}[ht!]
\centering
    \subcaptionbox{Cell \label{sub:Imgs_Cell}}
    {
    \includegraphics[height=0.17\textwidth,draft=false]{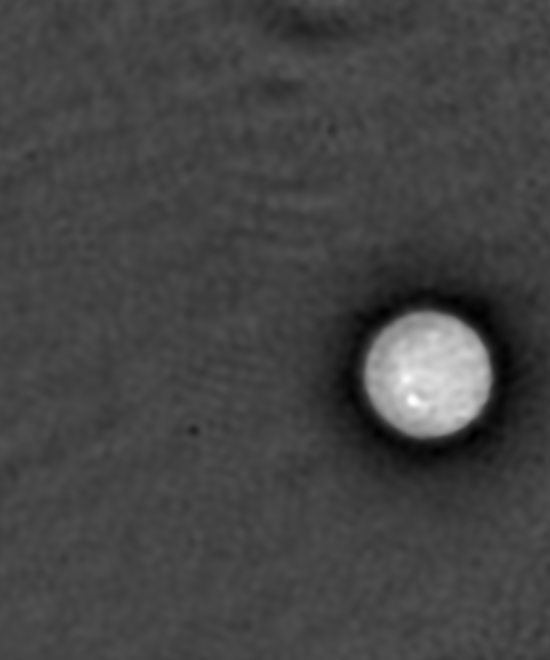}
    }
    \hspace{0.2em}
    \subcaptionbox{Brain \label{sub:Imgs_Brain}}
       {\includegraphics[width=0.17\textwidth,draft=false]{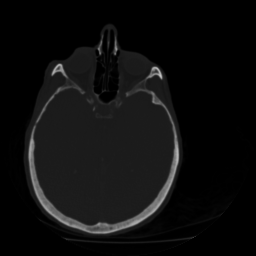}}
    \hspace{0.2em}
    \subcaptionbox{Kidney \label{sub:Imgs_Kidney}}
       {\includegraphics[width=0.17\textwidth,draft=false]{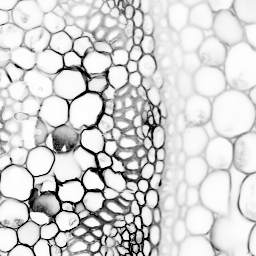}}
    \hspace{0.2em}
    \subcaptionbox{Chelsea the cat \label{sub:Imgs_Cat}}
       {\includegraphics[width=0.2125\textwidth,draft=false]{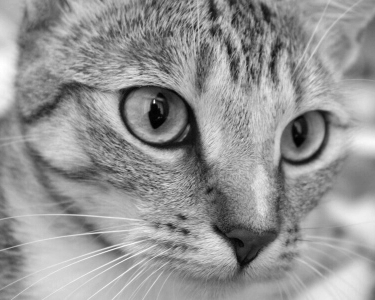}}
    %
    %
    %
    %
    \vspace{-1em}
    \caption{Images used for numerical tests.}
\label{fig:Experimental_images}
\vspace{-2\baselineskip}
\end{figure}

\subsection{NFFT -- Operator time}\label{ssec:Op_Time}

This section presents a comparative performance analysis of two distinct methodologies for initializing an instance of the unnormalized extended Gaussian ANOVA kernel, \(\Gamma \in \mathcal{M}_{n}(\R)\). Specifically, we contrast a NFFT--based fast summation method against precomputing and storing each entry of a dense matrix. The comparison focuses on the computational time required to calculate the matrix--vector product \(\Gamma \bfv\) for a given vector \(\bfv \in \R^n\) as the problem size, \(n\), increases.

For these tests, our evaluation is based on a set of scalings of \cref{sub:Imgs_Brain}, originally \(256 \times 256\) pixels. The kernel \(\Gamma\) is assembled with fixed parameters \(\sigma = 30\) and \( \mathsf{L} = 17\) windows. The number of windows comes from tracing patches with \(49\) pixels around each individual pixel of the image; these features are subdivided into \(16\) windows of size three and one window of size one. This particular patch size enables a detailed yet computationally efficient local representation of the image. Notwithstanding, if we increase the patch size, the computation times will increase approximately linearly with respect to \(\mathsf L\).

\begin{table}[h!]
\setlength{\tabcolsep}{0.4em}
\centering
\fontsize{8}{9}\selectfont
\def\arraystretch{1.15}
\begin{tabular}{cr rrrrrrrrrr}
\toprule
 \bf Dimensions & \multirow{2}{*}{\bf Size \(n\)} & \multirow{2}{*}{\bf MIS} &&  \multicolumn{3}{c}{\bf NFFT--based kernel}
 &&  \multicolumn{3}{c}{\bf Exact kernel}
 \\  \cline{5-7} \cline{9-11}
 \((n_1,n_2)\) &&&& \bf Setup & \bf Fast \(\Gamma \bfv\) & \bf Total && \bf Setup & \bf Exact \(\Gamma \bfv\) & \bf Total
 \\[-0.2em]
\midrule
 (16, 16) &      256 & 1.46 \sf s &&  19 \sf ms & 100 \sf ms &  120 \sf ms &&  19 \sf ms &  13 \(\mu\)\sf s &  19 \sf ms \\
 (21, 21) &      441 & 1.74 \sf s &&  22 \sf ms & 105 \sf ms &  128 \sf ms &&  51 \sf ms & 14 \(\mu\)\sf s &  52 \sf ms \\
 (37, 37) &   1\,369 & 1.97 \sf s &&    34 \sf ms &  224 \sf ms & 259 \sf ms && 410 \sf ms &  1 \sf ms &   411 \sf ms \\
(49, 49) &        2\,401 & 2.32 \sf s && 46 \sf ms & 290 \sf ms & 337 \sf ms && 1.15 \sf s & 2 \sf ms &     1.15 \sf s \\
 (64, 64) &       4\,096 & 2.74 \sf s && 60 \sf ms & 337 \sf ms & 397 \sf ms && 4.17 \sf s &  4 \sf ms &   4.17 \sf s \\
 (84, 84) &       7\,056 & 3.80 \sf s && 95 \sf ms &  469 \sf ms & 564 \sf ms && 50.09 \sf s &  14 \sf ms &  50.10 \sf s \\
 (111, 111) &    12\,321 & 4.54 \sf s && 147 \sf ms &  665 \sf ms & 812 \sf ms && 3 {\sf min} 46 \sf s & 26 \sf ms & 3 {\sf min} 46 \sf s \\
 (147, 147) &    21\,609 & 6.13 \sf s && 242 \sf ms &  1.03 \sf s &    1.28 \sf s && \MaxT & \MaxT & \MaxT \\
 (194, 194) &    37\,636 & 8.78 \sf s && 412 \sf ms &   1.63 \sf s &    2.05 \sf s && \MaxT & \MaxT & \MaxT \\
 (256, 256) &    65\,536 & 13.44 \sf s && 716 \sf ms &   2.77 \sf s &    3.48 \sf s && \MaxT & \MaxT & \MaxT \\
 (338, 338) &   114\,244 & 21.54 \sf s && 1.21 \sf s &   4.76 \sf s &    5.97 \sf s && \MaxT & \MaxT & \MaxT \\
 (446, 446) &   198\,916 & 36.14 \sf s && 2.12 \sf s &   8.22 \sf s &   10.34 \sf s && \MaxT & \MaxT & \MaxT \\
 (588, 588) &   345\,744 & 1 {\sf min} 5 \sf s && 4.02 \sf s &  14.27 \sf s &   18.29 \sf s && \MaxT & \MaxT & \MaxT \\
 (776, 776) &   602\,176 & 1 {\sf min} 54 \sf s && 7.32 \sf s &  24.67 \sf s &   31.99 \sf s && \MaxT & \MaxT & \MaxT \\
 (1\,001, 1\,001) & 1\,002\,001 & 3 {\sf min} 20 \sf s && 12.24 \sf s &  47.47 \sf s &   59.71 \sf s && \MaxT & \MaxT & \MaxT \\
\bottomrule
\end{tabular}
\\[0.5em]
\flushleft
\caption{Time comparison of setting up a kernel via a NFFT--based fast summation method versus computing the kernel exactly as the dimension grows. The symbol {\MaxT} is reported whenever the exact kernel was not computable.}
\vspace{-2\baselineskip}
\label{tb:Operator_Times}
\end{table}

The results of our comparative tests are summarized in \cref{tb:Operator_Times}. The first two columns describe the size of the problem. The third column reports the time required to compute the mutual information score (MIS) used to assemble the windows of features. This procedure is only required once, with its output used to build both versions of the operator. Subsequent columns (four--to--six and seven--to--nine) compare the two approaches to compute \(\Gamma\) and its action: these sections of the table report the average time associated with the setup process (one--time requirement), the computation of \(\Gamma \bfv\), and the aggregate of these times.

The most time--intensive task for each \(n\) is the calculation of the MIS. However, performing this task is independent of \(\lambda\), \(\mu\), and \(\sigma\), hence there is no need for a recalculation for varying kernel configurations. We observe that both the MIS and the NFFT--based summation grow approximately linearly with respect to \(n\). In contrast, the time required to apply the summation using a dense matrix exhibits quadratic growth \(n^2\), significantly limiting the computational feasibility of this approach. As a result, the exact kernel becomes uncomputable rather quickly before \(n \sim 20\,000\), when a storage of \(50\) GB would already be needed merely to assemble the matrix using all the subkernels. Such a requirement quickly renders the exact kernel computation impractical for large dimensions. Consequently, while it is evident that the computational cost of both methods increases with \(n\), the NFFT--based method remains computationally feasible.

\subsection{NFFT -- Solution time}\label{ssec:LinSys}

Here we test the efficiency of solving a linear system by using the NFFT for fast summation and explicitly computing the exact kernel. Similarly as in the previous section, we used a set of scalings of \cref{sub:Imgs_Kidney}, originally \(256\times 256\) pixels, to study how the method behaves as the problem size \(n\) increases. Here we considered \(\sigma = 50\), \( \mu = 10^{-2}\), \(\lambda = 10^{-1}\), and \(\mathsf{L} = 17\). The choice for the regularization parameter is based on the fact that if we scale up the weights of a graph and add nodes and edges, then the magnitudes of the eigenvalues will increase as well. As a result, it is enough for \(\lambda\) to be below the algebraic connectivity of the smallest graph. The system \( A \bfu = \lambda \bff\) was solved using a direct and an iterative method. The exact solution (direct method) was computed using a Cholesky factorization via the \texttt{dposv} routine of \texttt{LAPACK}, which is accessible through \texttt{SciPy}'s linear system solver \cite{2020SciPy-NMeth,lapack}. The iterative method of choice for \(A\) is CG, as it only requires one matrix--vector multiplication per iteration and takes into consideration the symmetry and positive definiteness of the operator. While setting up the solver, we fixed a tolerance of \(10^{-7}\), a maximum of \(200\) iterations, and set \(\bff\) as the initial guess.

\begin{table}[h!]
\setlength{\tabcolsep}{0.4em}
\centering
\fontsize{8}{9}\selectfont
\def\arraystretch{1.15}
\begin{tabular}{cr rrrrrrrrrrr}
\toprule
\multirow{3}{*}{\begin{tabular}[c]{@{}c@{}} \bf Dimensions\\ \((n_1,n_2)\) \end{tabular}}
& \multirow{3}{*}{\bf Size \(n\)} & \multirow{3}{*}{ \(\tilde{\kappa}(A)\) } &&  \multicolumn{3}{c}{\bf NFFT kernel}
 &  \multicolumn{4}{c}{\bf Exact kernel}
 \\   \cline{8-11}
 &&&& \multicolumn{2}{c}{\bf with CG}   && \texttt{dposv} && \multicolumn{2}{c}{\bf CG} 
 \\ \cline{5-6}  \cline{10-11}
 &&&& \bf Iters & \bf Time && \bf Time && \bf Iters & \bf Time
 \\[-0.2em]
\midrule
 (16, 16)   &       256 &            \EE[1] &&   10 & 1.27 \sf s && 3 \sf ms && 11 & 1.38 \sf s  \\
 (21, 21)   &       441 & \(2\times\)\EE[1] &&   12 & 1.57 \sf s && 5 \sf ms && 14 & 1.79 \sf s  \\
 (37, 37)   &      1\,369 & \(6\times\)\EE[1] &&   25 & 3.90 \sf s && 28 \sf ms && 29 & 4.47 \sf s  \\
 (49, 49)   &      2\,401 &            \EE[2] &&   26 & 5.13 \sf s && 92 \sf ms  && 31 & 6.01 \sf s  \\
 (64, 64)   &      4\,096 & \(2\times\)\EE[2] &&   35 & 9.21 \sf s && 363 \sf ms  && 41 & 10.71 \sf s  \\
 (84, 84)   &      7\,056 & \(4\times\)\EE[2] &&   38 & 14.59 \sf s && 1.108 \sf s  && 46 & 17.28 \sf s  \\
 (111, 111) &     12\,321 & \(7\times\)\EE[2] &&   53 & 31.46 \sf s && 4.942 \sf s  && 64 & 37.38 \sf s  \\
 (147, 147) &     21\,609 &            \EE[3] &&   55 & 53.12 \sf s && \MaxT &&  \MaxT &   \MaxT \\
 (194, 194) &     37\,636 & \(2\times\)\EE[3] &&   65 & 1 {\sf min} 45 \sf s && \MaxT &&  \MaxT &   \MaxT \\
 (256, 256) &     65\,536 & \(4\times\)\EE[3] &&   71 & 3 {\sf min} 17 \sf s && \MaxT &&  \MaxT &   \MaxT \\
 (338, 338) &    114\,244 & \(8\times\)\EE[3] &&   86 & 6 {\sf min} 54 \sf s && \MaxT &&  \MaxT &   \MaxT \\
 (446, 446) &    198\,916 &            \EE[4] &&   91 & 12 {\sf min} 46 \sf s && \MaxT &&  \MaxT &   \MaxT \\
 (588, 588) &    345\,744 & \(3\times\)\EE[4] &&   97 & 23 {\sf min} 41 \sf s && \MaxT &&  \MaxT &   \MaxT \\
 (776, 776) &    602\,176 & \(4\times\)\EE[4] && 119 & 49 {\sf min} 24 \sf s && \MaxT &&  \MaxT &   \MaxT \\
 (1\,001, 1\,001) & 1\,002\,001 & \(8\times\)\EE[4] && 116 & 1 {\sf h} 29 \sf min && \MaxT &&  \MaxT &   \MaxT \\
\bottomrule
\end{tabular}
\\[0.5em]
\flushleft
\caption{Time comparison for solving the nonlocal system without preconditioning, where the nonlocal operator \(\Gamma\) is either given by the NFFT--based fast summation method or by exact kernel computation. 
The symbol {\MaxT} is reported whenever the exact kernel was not computable.}
\vspace{-2\baselineskip}
\label{tb:Solution_Times}
\end{table}

The results of our tests are presented in \cref{tb:Solution_Times}. Similarly as in \cref{tb:Operator_Times}, the two initial columns describe the size of the problem \(n\). The third column displays the approximation of the condition number \( \tilde{\kappa}(A) = 2\lambda^{-1} \mu \max \etab  + 1\), which comes from \cref{eq:CondNumberUnPrecSystem}. The approximation gives us an idea of how difficult it is to solve the system of equations up to a given numerical precision. Subsequent columns, specifically, four--to--five and six--to--seven, compare the two approaches for computing \(\Gamma\) and its action: For the NFFT--based kernel, two columns are included to describe the number of iterations and average time spent with the CG method. A similar structure follows for the exact kernel computation, but an additional column is included to report the average time spent on the direct method \texttt{dposv}.

Naturally, the exact kernel approach is quickly outperformed by the NFFT--based summation method. As storing the whole matrix in memory becomes unpractical, the NFFT operator can easily be used to solve large--scale problems. In terms of the computational times, \texttt{dposv} outperforms CG; however, the rapid cubic growth of this method clearly indicates its dimensional limitations. If we focus our attention on CG, we notice that even for small dimensions the NFFT approach yields faster solution times with a reduced number of iterations. This iteration discrepancy is explained by the computational error of performing the Fast Gauss Transform; see \cref{sec:NFFT_Gauss}. The default parameters in \texttt{FastAdj} yield an approximation error in order to gain computational performance. Parameter studies in \cite{Potts2004,Kunis2006} indicate, based on an estimation of the \(\ell_\infty\) norm, that the error is bounded by \(10^{-5}\) for this choice of parameters. Certainly, parameter tuning of the algorithm can decrease this effect, which would yield similar iteration counts in CG. However, such a study is outside the scope of this paper.

As the time spent in each CG iteration is controlled by the time required to compute \(\Gamma \bfv\), it is of no surprise that both CG times grow at least linearly with respect to \(n\); see \cref{sec:NFFT_Gauss}. In terms of the iteration count, we also observe an increase which is directly explained by the approximation \(\tilde{\kappa}(A)\) and the problem size. The increasing value of the condition number of \(A\) is not only explained by the fact that \(\lambda\) is small but also the fact that, as \(n\) increases, there is more information collected in \(\etab\), yielding a larger spectral radius. To conclude, the increase of the condition number and the computational cost of the NFFT operator pinpoints the clear need for a preconditioner to improve solution times.

\subsection{Comparison of preconditioned iterative methods}\label{ssec:CG_Compare}

For this set of experiments, we test the performance of CG for different preconditioner choices in combination with the NFFT fast summation scheme. The considered setup is as follows: for each problem size \(n\), we solved the system \(A \bfu = \lambda \bff\) using CG for different values of \(\lambda\) in decreasing order. In principle, as \(n\) increased and \(\lambda\) became smaller, we could expect each test to become more ill--conditioned than the last. To precondition CG, we considered two bases to work with, namely the original basis where \(A\) is defined and the conjugated basis under the unitary transformation \(U\); i.e., where \(\UnitSim{A}\) is decomposed into a block diagonal system. For the first basis, we considered the unpreconditioned case represented by the preconditioner \(\mathsf{M} = \Idn\), the two diagonal preconditioners from \cref{sec:Diagonal_Preconditioners}; i.e., \(\mathsf{M} \in \{ \Prec[a], \Prec[b] \}\). For comparison with a different class of preconditioning strategies, specifically low--rank methods, we also tested a recently--developed strategy built from the inverse of the Nyström--based randomly pivoted Cholesky approximation \(\widehat{B} \approx B\) \cite{Chen2024}, which we denote \(\Prec[Ch]\). The approximation \( \widehat{B} \) allows us to use the Sherman--Morrison--Woodbury formula \cite[\S 2.1.4]{Golub2013} to approximate the inverse of \(A\) through \(\Prec[Ch]\); see \cite{Cutajar2016,Sun2015,Frangella2023,Abedsoltan2024,Wenger2022} for examples using this technique. For the basis induced by \(U\), we considered similar preconditioners as in the previous case; however, first we decoupled the system \( \UnitSim{A} \bfx = \lambda U^\top \bff \) into its diagonal subblocks and then applied CG to the largest subblock. The unpreconditioned method is labeled, for notational convenience, by \(\mathsf{I}_U\), and the two dense preconditioners developed in \cref{sec:Dense_Preconditioners} are labeled \( \mathcal{P}_{U,\mathsf a} \) and \(\mathcal{P}_{U,\mathsf b}\), where the second subindex indicates which diagonal preconditioner was transformed with \(U\). To sum up, the preconditioners used under the change of basis induced by \(U\) are given by \( \mathsf{M}\in \{\mathsf{I}_U, \mathcal{P}_{U,\mathsf a}, \mathcal{P}_{U,\mathsf b} \} \). Notice that to find a solution in the original basis, the approximate solution \( \bfx_{2:n}\) has to be concatenated with the solution of the one--dimensional block \( x_1\), and then \( \bfu = U \bfx\) transforms the solution back to the original basis.

\begin{table}[h!]
\setlength{\tabcolsep}{0.25em}
\centering
\fontsize{8}{9}\selectfont
\def\arraystretch{1.15}
\begin{tabular}{rrr rrrrrr c rrrrrr c rrrrrr c rrrrrr}
\toprule
& \multirow{3}{*}{ \begin{tabular}[r]{@{}c@{}} \bf Est.\\ \bf Memory\end{tabular} } && \multicolumn{27}{c}{\bf Regularization value \(\lambda\)}
\\  \cline{4-30}
\\[-1.1em]
\multicolumn{1}{c}{\textbf{Size} $n$} 
&&& \multicolumn{6}{c}{$ 1 $} && \multicolumn{6}{c}{$ 10^{-3} $} && \multicolumn{6}{c}{$ 10^{-6} $} && \multicolumn{6}{c}{$ 10^{-9} $}
\\ 
\cline{4-9} \cline{11-16} \cline{18-23} \cline{25-30}
&&& \multicolumn{6}{c}{\bf CG Iters} && \multicolumn{6}{c}{\bf CG Iters} && \multicolumn{6}{c}{\bf CG Iters} && \multicolumn{6}{c}{\bf CG Iters}
\\[-0.2em]
\midrule
108 		& 4 \textsf{MB}
    && 5 & 5 & 5 & 6 & 5 & 5     	&&  15 & 12 & 12 & 13 & 9 & 9 		&&  19 & 16 & 16 & 13 & 9 & 9 		&& $\times$ & $\times$ & $\times$ & 13 & 9 & 9 \\
208 		& 14 \textsf{MB} 
    && 5 & 5 & 5 & 7 & 6 & 6     	&&  19 & 13 & 13 & 17 & 10 & 10 	&&  25 & 18 & 18 & 17 & 10 & 10 	&& $\times$ & $\times$ & $\times$ & 17 & 10 & 10 \\
414 		& 54 \textsf{MB} 
    && 6 & 6 & 6 & 7 & 6 & 6     	&&  17 & 13 & 13 & 14 & 10 & 10 	&&  22 & 18 & 18 & 15 & 10 & 10 	&& $\times$ & $\times$ & $\times$ & 15 & 10 & 10 \\
775 		&  188 \textsf{MB} 
    && 6 & 6 & 6 & 7 & 7 & 7     	&&  16 & 14 & 14 & 14 & 10 & 10 	&&  20 & 19 & 19 & 14 & 10 & 10 	&& $\times$ & $\times$ & $\times$ & 14 & 10 & 10 \\
1\,505 	&  709 \textsf{MB} 
    && 8 & 7 & 7 & 10 & 7 & 7   	&&  19 & 14 & 14 & 16 & 10 & 10 	&&  24 & 18 & 18 & 16 & 10 & 10 	&& $\times$ & $\times$ & $\times$ & 16 & 10 & 10 \\
2\,880	& 3 \textsf{GB} 
    && 11 & 8 & 8 & 13 & 8 & 8  	&&  25 & 15 & 15 & 20 & 10 & 10 	&& $\times$ & 19 & 19 & 20 & 10 & 10   && $\times$ & $\times$ & $\times$ & 20 & 10 & 10 
\\
5\,561	& 9 \textsf{GB} 
    && 15 & 9 & 9 & 17 & 8 & 8  	&& $\times$ & 15 & 15 & 26 & 9 & 9 	&& $\times$ & 19 & 19 & 26 & 9 & 9 	&& $\times$ & $\times$ & $\times$ & 26 & 9 & 9 \\
10\,580	&  34 \textsf{GB} 
    && 21 & 9 & 9 & 24 & 9 & 9     	&& $\times$ & 15 & 15 & $\times$ & 10 & 10 	&& $\times$ & 20 & 20 & $\times$ & 10 & 10 	&& $\times$ & $\times$ & $\times$ & $\times$ & 10 & 10 \\
20\,193	&  125 \textsf{GB} 
    && 29 & 10 & 10 & $\times$ & 9 & 9			&& $\times$ & 16 & 16 & $\times$ & 10 & 10 	&& $\times$ & 20 & 20 & $\times$ & 10 & 10	&& $\times$ & $\times$ & $\times$ & $\times$ & 10 & 10 \\
38\,720	&  458 \textsf{GB} 
    && $\times$ & 11 & 11 & $\times$ & 10 & 10	&& $\times$ & 16 & 16 & $\times$ & 10 & 10 	&& $\times$ & 21 & 21 & $\times$ & 10 & 10	&& $\times$ & $\times$ & $\times$ & $\times$ & 10 & 10 \\
74\,420	& 2 \textsf{TB} 
    && $\times$ & 12 & 12 & $\times$ & 11 & 11	&& $\times$ & 17 & 17 & $\times$ & 11 & 11 	&& $\times$ & $\times$ & $\times$ & $\times$ & 11 & 11 	&& $\times$ & $\times$ & $\times$ & $\times$ & 11 & 11 \\
142\,636	& 6 \textsf{TB} 
    && $\times$ & 13 & 13 & $\times$ & 11 & 11	&& $\times$ & 17 & 17 & $\times$ & 12 & 12 	&& $\times$ & $\times$ & $\times$ & $\times$ & 12 & 12 	&& $\times$ & $\times$ & $\times$ & $\times$ & 12 & 12 \\
272\,728	& 22 \textsf{TB} 
    && $\times$ & 14 & 14 & $\times$ & 12 & 12	&& $\times$ & 18 & 18 & $\times$ & 12 & 12 	&& $\times$ & $\times$ & $\times$ & $\times$ & 12 & 12 	&& $\times$ & $\times$ & $\times$ & $\times$ & 12 & 12 \\
1\,000\,610 & 299 \textsf{TB} 
    && $\times$ & 15 & 15 & $\times$ & 13 & 13	&& $\times$ & 20 & 20 & $\times$ & 13 & 13 	&& $\times$ & $\times$ & $\times$ & $\times$ & 13 & 13 	&& $\times$ & $\times$ & $\times$ & $\times$ & 13 & 13
\\
\bottomrule
\end{tabular}
\\[0.5em]
\flushleft
\caption{Number of CG iterations for different regularization values \(\lambda \in \{1, 10^{-3}, 10^{-6}, 10^{-9}\}\) and each choice of preconditioner \( \mathsf{M} \in \{ \Idn, \Prec[a], \Prec[b],\mathsf{I}_U, \mathcal{P}_{U,\mathsf a}, \mathcal{P}_{U,\mathsf b} \} \) when solving the nonlocal system via the NFFT--based fast summation method. 
The symbol {\MaxT} is reported whenever a solution was not found within the maximum number of iterations.}
\vspace{-2\baselineskip}
\label{tb:Its_CG}
\end{table}

Once more, we considered different scalings of an image, namely \cref{sub:Imgs_Cat} with base size \( 375\times 300\), resulting in problem sizes \(n\) ranging from a few hundred to up to a million. Additionally, we fixed \( \sigma = 30\), \(\mu = 10^{-2}\), and \( \mathsf{L} = 41\). The value of the regularizer \(\lambda\) was taken in the set \( \Lambda = \{ 10^{-\imath}:\, \imath\in \llb 0,9\rrb \} \). For CG, we limited the number of iterations to \(30\), as it allowed for a reasonable solution time that could be inferred from the results of \cref{tb:Solution_Times}. Moreover, we tightened the tolerance of the iterative solver to \(10^{-8}\).

Some of the results are included in \cref{tb:Its_CG}. The first column displays the problem size, the second column displays the estimated memory that would be required to store the nonlocal subkernels in double--precision floating--point format, and the third--to--last columns display the number of CG iterations required for each test, excluding the use of \(\Prec[Ch]\) due to its poor performance. To be precise, each set of six columns corresponds to a fixed value of \(\lambda \in \{ 1, 10^{-3}, 10^{-6}, 10^{-9} \}\), and each column within a pack represents a different choice of preconditioner \( \mathsf{M} \in \{ \Idn, \Prec[a], \Prec[b], \mathsf{I}_U, \mathcal{P}_{U,\mathsf a}, \mathcal{P}_{U,\mathsf b} \} \).
Although tests were conducted for additional values of \(\lambda\), the presented results capture a clear trend that summarizes the behavior of CG and the preconditioners as \(\lambda\) becomes smaller and \(n\) grows.

Let us begin by analysing the rows of \cref{tb:Its_CG}. In terms of the dimension, it seems that, in a similar way as in \cref{tb:Solution_Times}, the problem becomes more ill--conditioned as \(n\) grows, hence the number of iterations for all methods grows for any \(\lambda\). We note that for small instances \((n < 2\,000)\), all methods were able to run under the maximum number of iterations except for the very ill--conditioned case \( \lambda = 10^{-9}\), where only the preconditioners under the change of basis induced by \(U\) were able to produce a solution. Let us focus on the cases \( \lambda \geq 10^{-6}\). Here we notice that as \(n\) grows, unpreconditioned CG in the original basis is the first method to fail in finding a solution. The behavior is closely followed by the unpreconditioned case in the basis induced by \(U\) and then by the two diagonal preconditioners. Now let us consider the performance of the preconditioners by carrying out a column--wise comparison. First, notice that the two diagonal preconditioners and their dense counterparts always took the same number of iterations. This behavior was expected from the equivalence results of \cref{lem:Equivalence_Diag_Precs,lemma:Spectral_Control_Pa_on_D}, suggesting that the Jacobi preconditioner, and its dense variant, is a preconditioner--of--choice due to its simplicity in contrast to the \(\ell_2\) choice \cref{eq:ANOVA_Norm_2_Prec}. Now focus on the range \(\lambda \leq 10^{-3}\). Here we observe that the number of CG iterations in the original basis always exceeds the number of iterations for the basis induced by \(U\). In particular, we can see that the case \( \mathsf M = \UnitSim[U]{I}\) already acts as a preconditioned method as the decoupling removes the ill--conditioned diagonal block given by \(\lambda\), while the second diagonal block is better conditioned and controlled only by the spectral radius and the algebraic connectivity of \(B\) shifted by \(\lambda\). It is interesting to notice that the number of preconditioned iterations does not change for the decoupled system as \(\lambda\) becomes smaller. This behavior is explained by the quality of the approximation \( \nicefrac{ [\rho(B) + \lambda] }{ [a(B) + \lambda] } \approx \nicefrac{ \rho(B) }{ a(B) } \). In other words, if \(\lambda\) is small enough in terms of the order of magnitude of \( a(B) \), then the decoupled method will have roughly the same condition number regardless of how small \(\lambda\) can be. The opposite behavior occurs when \( \lambda\) lies close to \( a(B) \) or has a larger value. This can be seen for \(n < 20\,000\) and \(\lambda = 1\), where we had that \( a(B) < \lambda\). Here, the number of iterations of the diagonal preconditioners and their dense variants was identical, and the unpreconditioned case under the change of basis induced by \(U\) no longer affected the eigenvalues, yielding the only case where CG with \( \mathsf M = \UnitSim[U]{I}\) was outperformed by CG with \( \mathsf M = \Idn\). However, notice that as \(n\) grew, it held that \( \lambda \lesssim a(B) \), and the problem became more ill--conditioned, then once more the dense preconditioners were able to take over and outperform the diagonal preconditioners.

\begin{figure}[h!]
\centering
	\includegraphics[scale=0.65,draft=false]{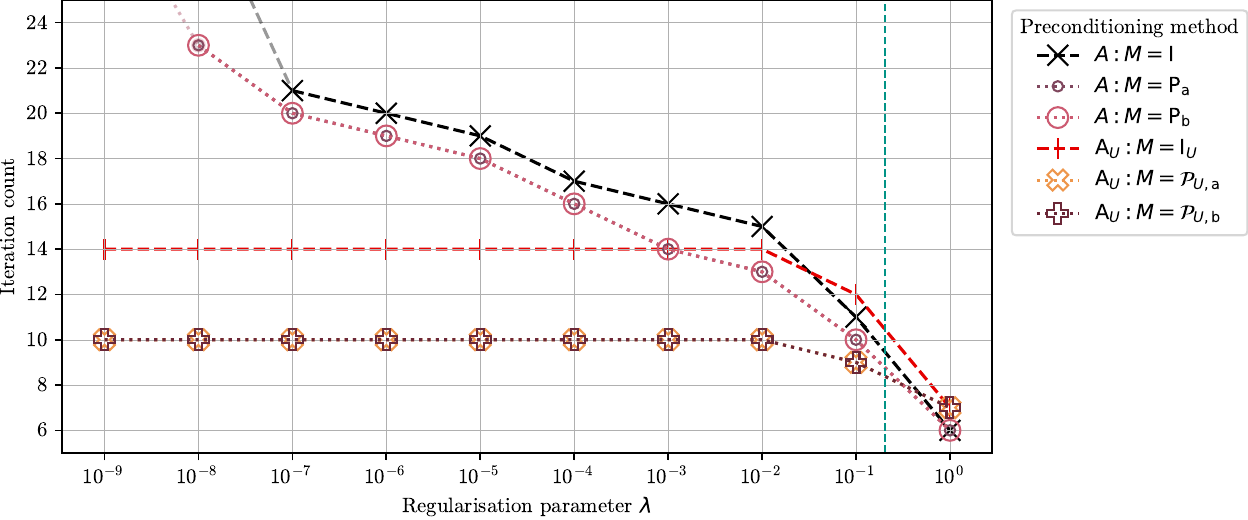}
\caption{Comparative display of the number of CG iterations for different regularization values \(\lambda \in \Lambda\) and each choice of preconditioner. Here \( \mathsf{M} \in \{ \Idn, \Prec[a], \Prec[b]\} \) was used for the system \( A\bfu = \lambda \bff\), while \( \mathsf{M} \in \{\mathsf{I}_U, \mathcal{P}_{U,\mathsf a}, \mathcal{P}_{U,\mathsf b} \} \) was used for the decoupled version of the system \( \UnitSim{A} \bfx = \lambda U^\top \bff\). The strong black and red dashed lines correspond to the unpreconditioned cases for both bases, while the other four dashed lines correspond to the preconditioned cases, respectively. If a test failed, the lines are extrapolated to a larger quantity in the vertical axes which is not depicted to indicate that the required number of iterations grows, for that method takes a greater number of iterations than the maximum allowed. The teal dashed vertical line represents the approximation of the algebraic connectivity \( a(B) \approx \frac{n}{n-1} \mu \min \etab \). 
}
\label{fig:CG_Its}
\vspace{-2\baselineskip}
\end{figure}

The behavior just described is also depicted for more values of \(\lambda\) in \cref{fig:CG_Its}. By fixing \(n = 775\), we are able to see the significantly improved performance of the dense preconditioners in conjunction with the change of basis and decoupling. Under this configuration, we see that for the cases where \(\lambda\) is smaller than \(a(B)\) by an order of magnitude, the iteration count remains constant. In contrast, for values that are close or larger than the algebraic connectivity of \(B\), the iteration count starts to vary and resembles the behavior observed for the cases under the original basis. The preconditioners for the original basis are clearly impacted by the value of the regularizer, requiring an increasing number of iterations for the worse--conditioned configurations up to the point where \(30\) iterations was no longer enough to solve the problem. Notice that as \( \lambda \) gets closer from the left to \( [a(B), \rho(B)]\), then the conditioning of the problem is better described by the quotient \( \nicefrac{ [\rho(B) + \lambda] }{ [a(B) + \lambda] } \). As a result, we can expect that for larger values of \(\lambda\) the problem will become better conditioned and the number of iterations will decrease to a constant value. This latter behavior amounts to the choice of a regularizer with value greater than \( \rho(B)\), as the system will essentially behave like a scaling of the identity.

We do not include the results for \(\Prec[Ch]\) due to its poor performance. For \(\lambda = 10^{-9}\) and \(\lambda = 10^{-6}\), \(\Prec[Ch]\) failed to produce an error--acceptable solution in all cases. For \(\lambda = 10^{-3}\), CG consistently required more iterations under this choice of preconditioner than the unpreconditioned strategy \( \mathsf{M} =\Idn\), and similarly for \(\lambda = 1\), it took at most two additional iterations compared to the identity. Moreover, the computational cost of employing \(\Prec[Ch]\) is prohibitive: its usage in CG succeeded in only 13 out of 56 attempts. In fact, even the preconditioners with the richest sampled data, accumulating a total count of 150 matrix--vector evaluations, were not able to compete against the other preconditioners. The time for building \(\Prec[Ch]\) ranged from around \(7\) seconds per sample for smaller problems to over \(3\) hours to the largest problem. As a result, by the time the preconditioner was ready to use, many other preconditioners had already solved the problem entirely. These results suggest that \(\Prec[Ch]\) is not an optimal choice of preconditioner for this class of problems: we believe this is due to the full--rank and relatively well--conditioned nature of this problem structure better captured with diagonal information than a low--rank approximation.

\subsection{Parameter learning application} \label{ssec:Learning}

As a proof of concept, here we are interested in the solving the inverse problem of determining \(\lambda\) from a set of clean and noisy images \( \big\{ (\bfu_{t,\textsf c}, \bff_t) \big\}_{t \in \llb 1,\mathsf T\rrb} \). Following the approach of \cite{D’Elia2021}, we aim to solve the bilevel problem, which we state in fully discretized form, given by
\begin{subequations}\label{eq:BilevelProblem}
\begin{align}
    &\qquad \min J(\bfu;\lambda) \coloneqq \frac{1}{\mathsf T} \sum\limits_{t=1}^{\mathsf T} \| \bfu_{t,\textsf c} - \bfu_t \|_2^2
    \\[-0.5em]
    \text{subject to} &  \notag
    \\
    & \bfu_t = \argmin_{ \bfv } \frac{\mu}{2} \langle\bfv, L_t\bfv\rangle  + \frac{\lambda}{2} \| \bfv - \bff_t \|_2^2
    \qquad \forall t\in \llb 1,\mathsf T \rrb,   \label{Prob:LowerLevel}
    \\
    & \lambda \in \Lambda \coloneqq [\Lambda_{\min}, \Lambda_{\max}].
\end{align}
\end{subequations}
Here, for any index \(t\in \llb 1,\mathsf T\rrb\), the Laplacian operator \(L_t\) is associated with the ANOVA kernel \(\Gamma_t\) built using features of the noisy variant \( \bff_t \), which in turn was constructed using additive noise with high variance. The bounds of \( \Lambda \) are positive to always ensure that any feasible \( \bfu_t\) resembles \( \bff_t\). For more details on the analysis of problems like \cref{eq:BilevelProblem}, we refer the reader to \cite{D’Elia2021}. In particular, we know that the problem has a unique solution for any \( \lambda \in \Lambda\), and that we can optimize the alternative reduced function \( j(\lambda) \coloneqq J\big(\bfu(\lambda);\lambda\big)\). Moreover, the solution of the lower--level problem is completely characterized by the family of systems \( (\lambda \Idn + \mu L_t) \bfu_t = \lambda \bff_t\) for all \(t\in \llb 1,\mathsf T\rrb\).

Given a solution of \cref{eq:BilevelProblem}, which we label \(\lambda_{\mathsf T}\), we validate the parameter by solving the lower--level problem \cref{Prob:LowerLevel} adapted to a new set of noisy samples from the validation set   \( \big\{ (\bfu_{v,\textsf c}, \bff_v) \big\}_{v \in \llb 1,\mathsf V\rrb} \). To quantify the effectiveness of a reconstruction, we use the structural similarity index (SSIM), which measures the similarity of the recovered image against its corresponding clean version.

The training set is composed of clean images corresponding to their noisy variants in \cref{sub:Imgs_Train_Noisy}. Similarly, the clean images corresponding to their noisy variants in \cref{sub:Imgs_Val_Noisy} form the validation set. Overall, the datasets contained $\mathsf{T} =15$ and $\mathsf{V} = 11$ images of different sizes. The problem sizes range from \(6\,794\) to \(10\,560\) for the training set and from \( 7\,566\) to \(11\,160\) for the validation set, respectively.

\begin{figure}[ht!]
\centering
    \subcaptionbox{Noisy samples \label{sub:Imgs_Train_Noisy}}
    {
    \includegraphics[scale=0.5,page=2]{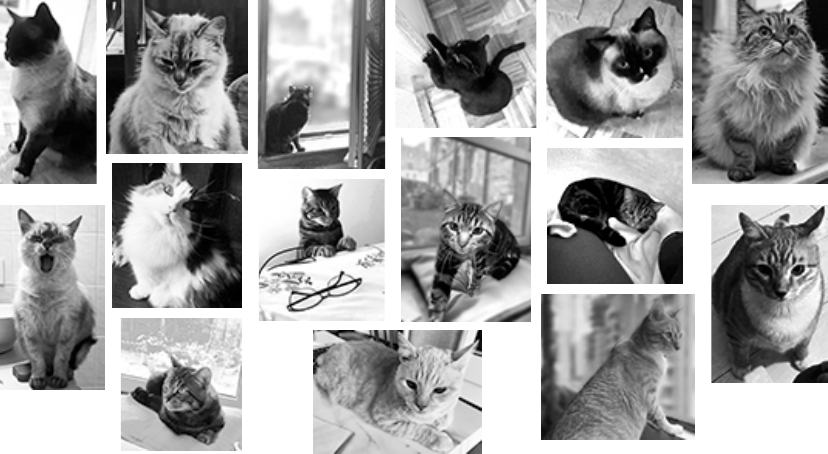}
    }
    \subcaptionbox{Recovered images \label{sub:Imgs_Train_Recovered}}
    {
    \includegraphics[scale=0.5,page=3]{Images/Training_and_Validation.pdf}
    }
    \vspace{-1em}
    \caption{Noisy inputs and outputs of the training phase. At the bottom--left corner of each image, we include the SSIM value, rounded to two digits, of the image with respect to the clean variant \(\bfu_{t,\textsf c}\) for all \( t\in \llb 1, \mathsf T \rrb\).}
\label{fig:Training}
\vspace{-1\baselineskip}
\end{figure}


\begin{figure}[ht!]
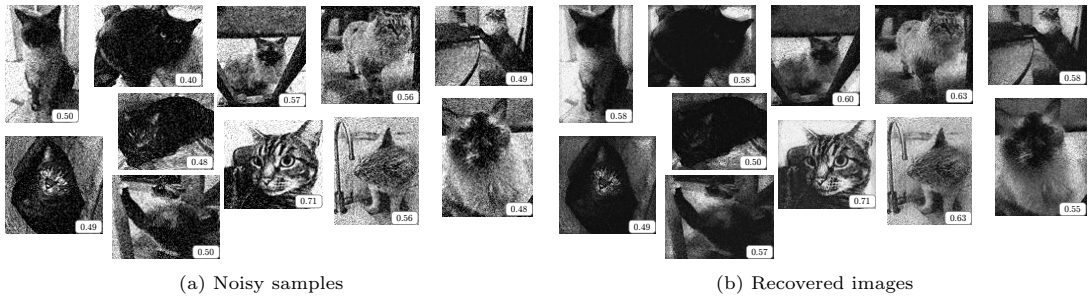

\centering
    \subcaptionbox{Noisy samples \label{sub:Imgs_Val_Noisy}}
    {
    \includegraphics[scale=0.5,page=5]{Images/Training_and_Validation.pdf}
    }
    \subcaptionbox{Recovered images \label{sub:Imgs_Val_Recovered}}
    {
    \includegraphics[scale=0.5,page=6]{Images/Training_and_Validation.pdf}
    }
    \vspace{-1em}
    \caption{Noisy inputs and outputs of the validation phase. At the bottom--left corner of each image, we include the SSIM value, rounded to two digits, of the image with respect to the clean variant \(\bfu_{v,\textsf c}\) for all \( v\in \llb 1,\mathsf V\rrb\).}
\label{fig:Validation}
\vspace{-2\baselineskip}
\end{figure}

To solve the bilevel problem \cref{eq:BilevelProblem}, we used the Brent method which is developed to minimize a scalar function of a scalar variable efficiently \cite[\S5]{brent2013algorithms}. Moreover, we parallelized the subkernel evaluation to speed up the overall kernel fast summation. By distributing the work of each subkernel when reconstructing a particular image, we obtained an efficient way to evaluate the composite objective functional. We fixed \(\mathsf{L} = 41\), \(\sigma = 40\), \( \mu = 10^{-2}\), and \(\Lambda = [10^{-9},10]\). The value of the upper bound on \(\lambda\) was selected to test the preconditioner for the ill--conditioned regime. By the nature of the training data, we would already need 335 \textsf{GB} in memory just to store all the kernels used for the evaluation of the lower--level systems. For solving the lower--level systems, we used preconditioned CG limited to 25 iterations and a relative tolerance of \(10^{-10}\). In particular, we employed the dense preconditioner \( \mathcal{P}_{U,\mathsf a}\) tailored to each image. Notice that the cost of assembling this preconditioner is absorbed by the computation of \(\etab\).

The optimization method was able to find a solution \( \lambda_{\mathsf T} \approx 7.48\) in 26 iterations with a tolerance of \(10^{-10}\). The average number of CG iterations per step was \(4\), highlighting the advantage of the our preconditioning scheme as the evaluation of the lower level problem was almost automatic. The results of the training are depicted in \cref{sub:Imgs_Train_Recovered}, where the average SSIM increased from 0.5221 to 0.5740. In general, we can observe that the method was able to smoothen the images and remove noise. However, in some cases the noise removal was not aggressive. This can be explained either by the choice of parameters and the clear limitations of using a one--parameter--fits--all approach. Notwithstanding, the method was able to find a solution in less than ten minutes that could later be postprocessed to remove the remaining artifacts. The method did not struggle with recovering textures and patterns and only slightly lowered the contrast of the output images. The validation test, depicted in \cref{sub:Imgs_Val_Recovered}, also showed an increase of the average SSIM from 0.5214 to 0.5576 while also featuring the preservation of some noise. Overall, this exercise suggests that the method can be used as the backbone of more specialized imaging tasks by allowing the computation and efficient solution of otherwise dense and ill--conditioned systems.

At this point, it is reasonable to question whether the reconstruction can be improved by jointly optimizing the regularization parameter \(\lambda\) and the shape parameter \(\sigma\) in \cref{eq:BilevelProblem}. By defining \( \theta \coloneqq \sigma^{-2}\), we adapt the bilevel training setup from \cref{eq:biparametric_problem_control} to guarantee the existence of solutions for the bivariate case. We employed L--BFGS--B \cite{Byrd1995,Zhu1997} with a two--point finite difference estimation of the Jacobian to avoid the additional computational cost of evaluating and storing the derivative kernel. The optimization was performed with fixed parameters: \(\mathsf{L} = 41\), \(\mu = 10^{-2}\), \(\Lambda = [10^{-9}, 255]\), and \(\Theta = [10^{-10}, 15]\). We also set a maximum of 50 iterations, a function discrepancy tolerance of \(10^{-15}\), and a limited memory size of \(5\). Under this setup, the algorithm converged to a minimizing pair \(( \lambda_{\mathsf T}, \theta_{\mathsf T}) \approx (15.87, 2.86 \times 10^{-4})\) in nine iterations, taking approximately \(38\) minutes. Once more, the average number of CG iterations was \(4\). Notice that \( \theta_{\mathsf T}^{-\nicefrac{1}{2}} \approx 59 \), which does not differ much from the selection of \(\sigma\) in the previous experiment. The objective function achieved a reduction of \(1.84\%\), while the mean SSIM increased to \(0.5758\), although we note that this is not the quantity that the problem formulation is minimizing. For the validation set, the reduction was \(4.15\%\), though the average SSIM remained unchanged compared to the one--dimensional problem in \cref{eq:BilevelProblem}. Overall, while the joint optimization process led to marginal improvements in reconstruction quality, it quadrupled the training time. This suggests that the additional computational effort may not be justified by the gain in the reconstruction performance, depending on the problem being examined, but it highlights the feasibility of our approach within such settings.

\section{Conclusions}\label{sec:Conc}

We have introduced a new NFFT--based framework for tackling bilevel optimization problems arising from image denoising applications. We utilized an ANOVA kernel, which may be readily applied in a matrix--free way within Krylov subspace solvers. To accelerate the solution algorithm for linear systems, we employed diagonal approximations coupled with a bespoke change of basis routine. Theoretical and numerical results underlined the potency of our methodology on a wide range of denoising and parameter learning models. Future work will involve applying similar frameworks to other classes of imaging problems, and investigating alternative preconditioners for problems where the eigenvalue distributions suggest alternative strategies to diagonal preconditioning.

\section*{Acknowledgements} A.M-T. thanks Bernhard Heinzelreiter (Maxwell Institute for Mathematical Sciences), Jennifer Buchberger, Belen Santacruz Reyes, and Alejandro Saenz Ortega for providing pictures of their cats and consenting for their use in publicly--visible research.

\section*{Data, code, and materials}
All the images and code for the experiments (and additional computational tests) are included at
    \href{https://github.com/andresrmt/Prec_GLs_NFFT_BLO}{\texttt{https://edin.ac/3zh86hT}}.


\footnotesize
\bibliographystyle{siamplain}
\bibliography{ex_article.bbl}
\end{document}